\crefname{equation}{}{}
\let\originalleft\left
\let\originalright\right
\renewcommand{\left}{\mathopen{}\mathclose\bgroup\originalleft}
\renewcommand{\right}{\aftergroup\egroup\originalright}
\newcommand\bfrac[2]{\left(\frac{#1}{#2}\right)}
\crefname{algocf}{Algorithm}{Algorithms}
\crefname{equation}{}{} 
\colorlet{refkey}{orange!20}
\colorlet{labelkey}{blue!30}
\crefname{algocf}{Algorithm}{Algorithms}
\numberwithin{equation}{section}
\newtheorem{theorem}{Theorem}[section]
\newtheorem{lemma}[theorem]{Lemma}
\crefname{claim}{Claim}{Claims}
\newtheorem{conjecture}[theorem]{Conjecture}
\newtheorem*{question*}{Question}
\newtheorem{fact}[theorem]{Fact}
\theoremstyle{definition}
\newtheorem{definition}[theorem]{Definition}
\newtheorem{question}[theorem]{Question}
\newtheorem*{definition*}{Definition}
\theoremstyle{remark}
\newtheorem{remark}[theorem]{Remark}
\newcommand{\floor}[1]{\left\lfloor #1 \right\rfloor}
\newcommand{\ceil}[1]{\left\lceil #1 \right\rceil}
\newcommand{\one}{\mathbbm{1}}
\newcommand{\mb}{\mathbb}
\newcommand{\mbm}{\mathbbm}
\newcommand{\mc}{\mathcal}
\newcommand{\mr}{\mathrm}
\newcommand{\on}{\operatorname}
\renewcommand{\Pr}{\mb P}
\title{Partitioning problems via random processes}
\author[Anastos]{Michael Anastos}
\address{Institute of Science and Technology Austria (ISTA). Am Campus 1, 3400 Klosterneuburg, Austria.}
\email{michael.anastos@ist.ac.at}
\author[Cooley]{Oliver Cooley}
\address{Ludwig-Maximilians-Universit\"at M\"unchen, Mathematisches Institut. Theresienstrasse 39, D-80333 München, Germany.}
\email{cooley@math.lmu.de}
\author[Kang]{Mihyun Kang}
\address{Graz University of Technology, Institute of Discrete Mathematics. Steyrergasse 30, 8010 Graz, Austria.
}
\email{kang@math.tugraz.at}
\author[Kwan]{Matthew Kwan}
\address{Institute of Science and Technology Austria (ISTA). Am Campus 1, 3400 Klosterneuburg, Austria.}
\email{matthew.kwan@ist.ac.at}
\thanks{Michael Anastos was supported by the European Union’s Horizon 2020 research and innovation
programme under the Marie Sk\l{}odowska-Curie grant agreement No.\ 101034413.
Matthew Kwan was supported by ERC Starting Grant ``RANDSTRUCT'' No.\ 101076777, also funded by the European Union
\includegraphics[width=4.5mm, height=3mm]{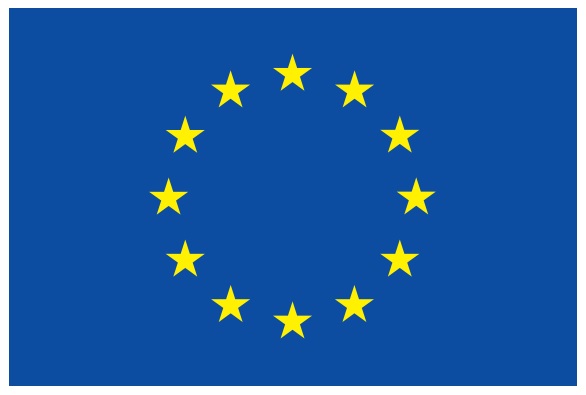}.
}
\begin{document}

\maketitle
\begin{abstract}
There are a number of well-known problems and conjectures about partitioning graphs to satisfy local constraints. For example, the \emph{majority colouring conjecture} of Kreutzer, Oum, Seymour, van der Zypen and Wood states that every directed graph has a 3-colouring such that for every vertex $v$, at most half of the out-neighbours of $v$ have the same colour as $v$. As another example, the \emph{internal partition conjecture}, due to DeVos and to Ban and Linial, states that for every $d$, all but finitely many $d$-regular graphs have a partition into two nonempty parts such that for every vertex $v$, at least half of the neighbours of $v$ lie in the same part as $v$. 

We prove several results in this spirit: in particular, two of our results are that the majority colouring conjecture holds for Erd\H os--R\'enyi random directed graphs (of any density), and that the internal partition conjecture holds if we permit a tiny number of ``exceptional vertices''.

Our proofs involve a variety of techniques, including several different methods to analyse random recolouring processes. One highlight is a \emph{personality-changing} scheme: we ``forget'' certain information based on the state of a Markov chain, giving us more independence to work with.

\medskip
\noindent{\textbf{Mathematics Subject Classification: }05C80, 05C15, 60C05}
\end{abstract}

\section{Introduction}

It is a classical fact (perhaps first proved by Lov\'asz; see \cite[pp.~237--238]{Lov66}) that every (finite\footnote{This fact is actually false for uncountably infinite graphs, and it
is a well-known open question whether it is true for countably infinite
graphs (this is the \emph{Unfriendly Partition Conjecture}; see \cite{SM90,AMP90}).}) graph has a red-blue colouring of its vertices, such that for every
red vertex, at least half of its neighbours are blue, and for each
blue vertex, at least half of its neighbours are red. Such red-blue
colourings are often called \emph{external partitions}, \emph{unfriendly
partitions} or \emph{disassortative} \emph{partitions}, and can be
interpreted from several different points of view. For example:
\begin{itemize}
\item A \emph{cut} of a graph is a partition of its vertices into two parts,
and the \emph{size} of a cut is the number of edges between the two
parts. Finding the maximum possible size of a cut is called the \emph{MAX-CUT
problem} and is of fundamental importance in computer science and
optimisation. External partitions correspond precisely to those cuts
which are \emph{locally} maximal, and have been studied extensively
in this context (see for example \cite{CGVYZ20,ABPW17,ET55,SY91,Pol95}).
\item The \emph{Ising model} is one of the central objects of study in statistical
physics. Given a graph $G$, with a real-valued \emph{interaction}
on each edge, the Ising
model describes a probability distribution over the set of configurations
of $\pm1$-valued \emph{spins} on the vertices of $G$, in terms of a \emph{Hamiltonian
}describing the \emph{energy} of each configuration. In this setting, an important question is to understand the \emph{locally energy-minimising} configurations (see for example \cite{ADLO19,GNS18,SGNS20,BM81,DGZ}). If the interactions
all take the same negative value (this is the \emph{antiferromagnetic}
regime), the locally energy-minimising configurations correspond
precisely to external partitions of $G$.
\item \emph{Minority dynamics} (also studied as the dynamics of the \emph{El Farol bar problem}; see for example \cite{CPG99,CDF22,CCCM09}) is a dynamical system on a social network where each person is coded either red or blue. In each round, each person changes their colour to the least popular colour among their neighbours. External partitions are precisely those colourings which are stable for minority dynamics.
\end{itemize}
Although it is a near-triviality to show that every graph has an external
partition, it is easy to obtain highly nontrivial questions by making
small changes to the definition of ``external partition''. The purpose
of this paper is to demonstrate how to make progress on various problems
of this type using probabilistic methods and ideas, especially \emph{random
recolouring processes}. Before we discuss our results, we start with
some background on some of the concepts and questions in this area.

\subsection{\label{subsec:internal}Internal partitions}

First, it is natural to consider the ``opposite'' of an external
partition: in an \emph{internal} partition, at least half of the neighbours
of every red vertex are red, and at least half of the neighbours of
every blue vertex are blue. These colourings correspond to locally
\emph{minimal} cuts, locally energy-minimising configurations in
the \emph{ferromagnetic} Ising model, or stable configurations of \emph{majority dynamics} (which has been much more thoroughly studied than minority dynamics; see the survey in \cite{MT17}). See the introduction of \cite{BL16} for a review of the graph theory literature on internal and external partitions.

Some graphs (such as stars or cliques) only have ``trivial'' internal
partitions (in which all vertices are the same colour). However, answering
a conjecture of Thomassen, it was proved by Stiebitz~\cite{Sti96} that
one can always find a nontrivial ``near-internal'' partition, where
each vertex has at most one more neighbour in its opposite colour
than its own.
A well-known conjecture in this area is that regular graphs without internal partitions are
extremely rare.
\begin{conjecture}
\label{conj:ban-linial}For every $d\in\mb{N}$, there are only finitely
many $d$-regular graphs with no nontrivial internal partition.
\end{conjecture}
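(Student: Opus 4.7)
The plan is to attempt a proof via a random recolouring process. Given a $d$-regular graph $G$ on $n$ vertices, with $n$ sufficiently large in terms of $d$, I would first choose a uniformly random balanced red-blue partition of $V(G)$, i.e.\ a partition into two sets of sizes $\lfloor n/2 \rfloor$ and $\lceil n/2 \rceil$. Under this partition, each vertex has a hypergeometric number of same-coloured neighbours, concentrated around $d/2$, so only a constant fraction of the vertices (depending on $d$) will be \emph{unhappy}, meaning they have strictly more opposite-colour than same-colour neighbours. I would then iteratively flip such unhappy vertices---this is precisely majority dynamics. Since every flip strictly increases the number of monochromatic edges, the process must terminate within $\binom{n}{2}$ steps, at which point no vertex is unhappy and we have arrived at an internal partition.

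The principal obstacle is to ensure that the terminal partition is \emph{nontrivial}: one needs to prevent majority dynamics from collapsing into a monochromatic configuration. To control this, I would track the imbalance $\abs{\abs{R}-\abs{B}}$ throughout the process and attempt to show that, with positive probability over the initial partition and the (possibly random) order in which flips are performed, this imbalance stays strictly below $n$. A natural strategy is to set aside a small \emph{reservoir} of vertices in each colour class and argue that updates happening far from the reservoir almost never propagate to it, via a union bound over potential ``avalanche paths'' in $G$. The \emph{personality-changing} scheme advertised in the abstract is, I suspect, tailored to exactly this bottleneck: by conditioning on and then forgetting certain information about the state of an auxiliary Markov chain, one frees up fresh randomness with which to bound avalanche probabilities, avoiding the intricate dependencies that naturally appear in iterative recolouring processes.

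The key quantitative step is then to balance the reservoir size against the probability that a reservoir vertex is disturbed, which essentially forces a trade-off between the size of $G$ and the number of residual ``problem'' vertices one can tolerate. Indeed, the approximate version of the conjecture advertised in the abstract, which permits a tiny number of exceptional vertices, strongly suggests that eliminating the very last bad vertices is genuinely difficult: such vertices may lie in small ``resistant'' subgraphs in which any local fix merely shifts the unhappiness to a neighbour, producing long sequences of updates that threaten to destabilise the reservoir. For the full \cref{conj:ban-linial}, one would need to handle these resistant configurations---presumably via a compactness argument showing that only finitely many $d$-regular graphs can contain them, or by a more delicate non-uniform choice of initial colouring that avoids them outright. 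I expect this last step to be the genuine obstacle, and it is presumably what keeps the full conjecture open.
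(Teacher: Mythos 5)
The statement you are attempting is Conjecture~1.1, which the paper does \emph{not} prove: it is explicitly labelled a conjecture and remains open. The paper proves only the approximate version, Theorem~1.3, which permits $\varepsilon n$ exceptional vertices and also asks for a bisection. Your proposal correctly recognises that the full conjecture is not established, and you correctly intuit that a random recolouring process underlies the approximate result, so there is no correctness issue as such. However, your reconstruction of how the approximate version is proved diverges from the paper's actual argument in two important ways.

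First, the personality-changing scheme is not used for internal partitions at all. In the paper it is deployed only in the proof of Theorem~1.5 (majority 3-colouring of random digraphs), where it replaces a wasteful factor of~$3$ by a factor of~$2$ in a recurrence for the fraction of non-majority-coloured vertices. It has nothing to do with reservoirs, avalanche paths, or keeping the imbalance $\bigl||R|-|B|\bigr|$ below $n$.

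Second, and more centrally, your plan flips unhappy vertices one at a time and runs majority dynamics to termination (up to $\binom{n}{2}$ steps). You correctly identify that the imbalance is then very hard to control, and you reach for a reservoir/union-bound argument that the paper does not use. The paper's actual fix is a \emph{lazy batched} process: each vertex $v$ receives an independent $\operatorname{Bernoulli}(1/d)^{\otimes K}$ sequence with $K=5d^{2}/\varepsilon$, and in round~$i$ one simultaneously flips every non-internal vertex~$v$ with $s(v)_i=1$. As long as at least $\varepsilon n/2$ vertices are non-internal, the expected cut size drops by $\Omega(\varepsilon n/d)$ per round (the $p=1/d$ laziness ensures flipped pairs of adjacent vertices contribute only a lower-order correction), so after only $K=O_{d,\varepsilon}(1)$ rounds there must be some round~$t$ at which few vertices are non-internal with probability at least $\varepsilon/(4d^{2})$. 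Since the process runs for a bounded number of rounds and $G$ has maximum degree $d$, changing the initial colour or the sequence $s(v)$ of a single vertex changes $|A_t|$ by at most $d^{K}$, so Azuma--Hoeffding gives concentration of $|A_t|$ around $n/2$. One then moves $O(\varepsilon n/d)$ vertices to force an exact bisection, spoiling at most $O(\varepsilon n)$ additional vertices. The batching, not a reservoir, is the mechanism that makes the imbalance controllable; your one-step-at-a-time process genuinely does run into the obstacle you describe, and the paper's workaround is precisely to avoid running for more than a constant number of rounds. Your final assessment---that eliminating the last few exceptional vertices is the real open problem---agrees with the authors' own discussion in Section~1.6.
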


As far as we can tell, this conjecture first appeared in print in a paper of Ban and Linial~\cite{BL16}, though it was previously posed in an open problem collection by DeVos~\cite{DeV}.

\cref{conj:ban-linial} is only known to hold for $d\in\{1,2,3,4,6\}$ (see \cite{SD02,BL16}). However, a weaker result is known to hold for all even $d$: adapting Stiebitz'
ideas, it was proved by Linial and Louis~\cite{LL20} that if $d$ is even,
then a vanishingly small proportion of $n$-vertex $d$-regular graphs
fail to have an internal partition (in the language of random graphs:
a random $d$-regular graph $\mb{G}_{\mathrm{reg}}(n,d)$ has a nontrivial
internal partition whp\footnote{We say a property holds \emph{with high probability}, or ``whp''
for short, if it holds with probability tending to 1. Here and for
the rest of the paper, all asymptotics are as $n\to\infty$, unless
stated otherwise.}).

\subsection{\label{subsec:bisections}Bisections}

A \emph{bisection} is a red-blue colouring where the numbers of red
and blue vertices are equal (or differ by one, if the total number
of vertices is odd). Although every graph has an external partition,
some graphs (such as stars) do not have external bisections. However,
one can ``come close'' for almost all graphs: resolving an old conjecture
due to F\"uredi, it was recently proved by Ferber, Kwan, Narayanan,
Sah and Sawhney~\cite{FKNSS} that almost all graphs have bisections in
which almost every vertex is externally coloured (and bisections in which almost every vertex is internally coloured).
To be precise: whp, an Erd\H os--R\'enyi graph\footnote{In the Erd\H os--R\'enyi random graph $\mathbb{G}(n,p)$ (perhaps more appropriately called the \emph{binomial} random graph), we fix a set of $n$ vertices and include each of the $\binom n2$ possible edges with probability $p$ independently.} $G\sim\mathbb{G}(n,1/2)$
has a bisection in which all but $o(n)$ vertices have at least half
of their neighbours in the opposite colour (and a bisection in which
all but $o(n)$ vertices have at least half of their neighbours in
their own colour).

This work has since been generalised in multiple directions. First,
via a beautiful application of Lindeberg's replacement trick (inspired
by work of Dembo, Montanari and Sen~\cite{DMS17}), and a delicate second
moment calculation (building on earlier results by Gamarnik and Li~\cite{GL18}), Dandi, Gamarnik and Zdeborov\'a~\cite{DGZ}
proved
that the conclusion of F\"uredi's conjecture holds even for
quite sparse random graphs: it holds whp for $\mb{G}(n,p_{n})$
as long as $np_{n}\to\infty$. In exciting recent work, Minzer,
Sah and Sawhney~\cite{MSS} finally managed to handle true internal/external
bisections (without exceptional vertices): with a sophisticated second-moment
calculation together with some ideas from the analysis of Boolean functions, they
proved that whp
$G\sim\mathbb{G}(n,1/2)$ has an external bisection and an internal
bisection. It is an open question whether this is also possible for
sparser (or denser) random graphs (though we note that graphs which are very close to being complete do not have internal bisections).
\begin{conjecture}
\label{conj:random}For any $p_{n}\in[0,1]$ (allowed to depend on
$n$), whp $G\sim\mathbb{G}(n,p_{n})$ has an external bisection. If $(1-p_n)n -\log n\to \infty$ then whp $G\sim\mathbb{G}(n,p_{n})$ also has an internal bisection.
\end{conjecture}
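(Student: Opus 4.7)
The plan is to split into density regimes. For the wide middle range where $np_n\to\infty$ and $p_n$ is bounded away from $1$, I would start from the almost-bisection theorem of Dandi--Gamarnik--Zdeborov\'a~\cite{DGZ}: it supplies a bisection $(R,B)$ of $G\sim\mathbb{G}(n,p_n)$ with at most $o(n)$ exceptional vertices (external or internal, depending on the version being attacked). Let $S$ denote this set of defects. The goal is to run a random recolouring that empties $S$ without destroying the bisection property. The natural move is a \emph{swap}: pick $u\in S$ of colour red and $v\notin S$ of colour blue, and flip both to preserve $\abs R=\abs B$.

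The analysis needs two things. First, each swap should have positive probability of removing $u$ from $S$; this is an edge-counting statement, since $u$ has strictly more than half of its neighbours in the ``wrong'' class and so a beneficial partner $v$ exists in the majority. Second, swaps must not create too many new defects among the neighbours of the swapped vertices. Here the personality-changing scheme flagged in the paper seems the right tool: by periodically resampling part of the edge randomness, or by ``forgetting'' which vertices are currently exceptional at suitable stopping times of an auxiliary Markov chain, one decouples the process from its history and can run a supermartingale argument to drive $\abs S$ to zero. In the external case a swap tends to push neighbours toward the wrong colour, so $v$ must be chosen carefully---for instance uniformly among blue vertices with many red neighbours---so that the local effect is restorative on average.

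For the endpoint regimes, separate combinatorial arguments are needed. When $np_n=O(1)$, whp $G$ is a disjoint union of isolated vertices, small trees, and a bounded number of unicyclic components. Each small tree admits a proper $2$-colouring (automatically external) or can be coloured monochromatically (automatically internal), and the $\Theta(n)$ isolated vertices provide so much freedom that achieving exact balance is a matter of careful bookkeeping. At the opposite extreme, when $p_n$ is close to $1$ and the condition $(1-p_n)n-\log n\to\infty$ is near-tight, one passes to the complement $\overline G\sim\mathbb{G}(n,1-p_n)$, which has bounded typical degree; the internal partition problem for $G$ can then be reformulated as a condition on the sparse complement and analysed by the methods of the previous paragraph.

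The main obstacle, I expect, is the interface regime $np_n=\Theta(\log n)$ or slightly below. Here typical vertices already have enough neighbours that component-based arguments fail, yet low-degree or isolated vertices persist and are fragile under the recolouring process. Handling this transition seamlessly---so that a single random recolouring framework governs the analysis both above and below the connectivity threshold---is where the genuine work would lie.
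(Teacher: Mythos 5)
This statement is \cref{conj:random}, which the paper explicitly labels and treats as an \emph{open conjecture}---there is no in-paper proof to compare against. The paper only establishes approximate versions: \cref{thm:random} gives $o(1)$-almost-internal/external bisections for $\limsup np_n<\infty$, Dandi--Gamarnik--Zdeborov\'a cover the approximate statement when $np_n\to\infty$, and the only \emph{exact} result cited is Minzer--Sah--Sawhney at the single density $p=1/2$. So any proposal here is an attempt at a genuinely open problem, not a reconstruction of something in the paper.

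Evaluated on its own terms, your proposal has real gaps. The description of the $np_n=O(1)$ regime is wrong once $np_n>1$: whp $G$ has a giant component of size $\Theta(n)$ with unbounded maximum degree, not a disjoint union of bounded trees and unicyclic pieces (that picture holds only in the strictly subcritical range). More fundamentally, the swap process in the middle regime lacks any convergence mechanism. For unconstrained 2-colourings one flips a single defective vertex and the cut size changes monotonically, which drives a local-search argument; for bisections one must swap \emph{pairs}, and a pair swap that fixes $u$ can create a defect at $v$ or at neighbours of either endpoint, with no visible potential function that decreases. The paper's own machinery (batched recolourings, the personality-changing Markov chain, subcritical branching-process comparison) is built to control $o(n)$ defects, not to drive the defect count all the way to zero, and invoking that vocabulary does not by itself rule out the process stalling at $\Theta(1)$ stubborn defects. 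Precisely this gap---from ``all but $o(n)$ vertices'' to ``all vertices,'' uniformly over all densities---is the content of the conjecture, and the paper's authors flag it as open (cf.\ the discussion in \cref{subsec:further}). The complement duality for $p_n$ near $1$ is a correct heuristic but merely reduces to the same unresolved sparse case, and the interface regime $np_n=\Theta(\log n)$ is acknowledged but not addressed.
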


It would also be very interesting to find the above internal/external
bisections \emph{efficiently}\footnote{Many graph partitioning theorems (e.g., those in \cite{Sti96,FKNSS,LL20}) can be easily turned into efficient algorithms, but the proofs in \cite{MSS,DGZ} cannot, on account of their use of the second moment method.} (i.e., via a polynomial-time randomised algorithm).
Behrens, Arpino, Kivva and Zdeborov\'a~\cite{BAKZ22} used ideas from statistical
physics to study computational obstructions for certain types
of partitioning problems in random graphs; their work indicates that
the problem of finding an internal or external bisection does \emph{not}
have such an obstruction.

We remark that there are a number of other fascinating conjectures
about external and internal bisections that are less closely related
to our results in this paper: perhaps most notably, Bollob\'as and
Scott~\cite[Conjecture~8]{BS02} conjectured an analogue of Stiebitz' theorem for internal
bisections, and Ban and Linial~\cite[Conjecture~1]{BL16} conjectured that every
bridgeless cubic graph except the Petersen graph has an external bisection.

\subsection{\label{subsec:digraphs}Directed graphs}

Generalising the notion of an external partition to \emph{directed
graphs} (``digraphs''), we could ask for a red-blue colouring with the property that
for each vertex $v$, at most half of the \emph{out-neighbours} of
$v$ have the same colour as $v$. 
For general digraphs, it is not always
possible to find a colouring satisfying this property (e.g., odd directed cycles are counterexamples).
In a similar spirit to the last two subsections, one could try to
show that such a colouring is always ``almost'' possible, or that
such a colouring exists for almost all digraphs, but so far most of
the attention in this area has focused on adding \emph{additional
colours}.

Specifically, for any vertex-colouring of a digraph, we say that a
vertex is \emph{majority-coloured} if at most half of its out-neighbours
have the same colour as it. If every vertex is majority-coloured,
we say the colouring is a \emph{majority colouring}. The following
fascinating conjecture (typically known as the \emph{majority colouring
conjecture}) was made by Kreutzer, Oum, Seymour, van der Zypen and
Wood~\cite{KOSZW17}.
\begin{conjecture}
\label{conj:majority-colouring}Every directed graph has a majority
3-colouring.
\end{conjecture}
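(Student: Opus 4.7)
The plan is to attack the conjecture via a randomised iterative recolouring process combined with a carefully chosen Lyapunov potential, in the spirit of the paper. Start from an arbitrary 3-colouring $c \colon V(D) \to \{1,2,3\}$ and iterate: at each step, pick a uniformly random \emph{violated} vertex $v$ (one whose own colour appears on strictly more than half of its out-neighbours) and recolour it to whichever of the two other colours appears on fewer out-neighbours, breaking ties uniformly at random. I would track a Lyapunov functional
\[
\Phi(c) = \alpha \sum_v f_v(c)^2 + \beta \sum_v g_v(c),
\]
where $f_v(c) = |\{u \in N^+(v) : c(u) = c(v)\}|$ counts the monochromatic out-edges at $v$, $g_v$ is a correction term encoding the imbalance of the colour distribution on $N^-(v)$, and $\alpha,\beta>0$ are to be tuned. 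The aim is to show that $\mb E[\Delta\Phi]<0$ whenever any violated vertex remains, forcing the process to reach a majority $3$-colouring in finite expected time.

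The reason this conjecture has resisted proof since 2017 is the tightness of the local arithmetic. If $v$ is violated with $f_v>d^+(v)/2$, pigeonhole on the two remaining colours yields a colour $c'$ with $|N^+_{c'}(v)|<d^+(v)/4$, so the recolouring saves on the order of $d^+(v)/4$ monochromatic out-edges at $v$; but it can create up to $d^-(v)$ new monochromatic \emph{in-}edges, and in a general digraph the ratio $d^-(v)/d^+(v)$ can be arbitrarily large. With four colours one gets an extra pigeonhole slot, which is just enough room to run the known potential-function arguments; with three colours one is exactly on the boundary, and any worst-case single local move can be cancelled by the incoming side.

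My plan for closing the gap is to exploit randomness \emph{globally} rather than at a single step: after a short burn-in, I would try to prove that the conditional distribution of the colours appearing on $N^-(v)$, given the history of the process and the structure of $D$, is close enough to uniform on $\{1,2,3\}$ that the expected in-edge cost of a recolouring is close to $d^-(v)/3$, which is strictly dominated by the out-edge saving once summed over the whole graph. The main obstacle, and where I expect the argument to genuinely strain, is that the recolouring chain is non-reversible and highly non-local: the state of $v$ is coupled with the state of every vertex lying on a directed path into $v$, and standard mixing tools such as canonical paths or coupling do not obviously apply. To recover enough independence I would try to import the \emph{personality-changing} scheme highlighted in the paper, inserting artificial ``amnesia'' steps driven by an auxiliary Markov chain so that the analysis of $\mb E[\Delta\Phi]$ decouples into a handful of nearly independent contributions. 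If the required decorrelation can only be established under a minimum-out-degree or expansion hypothesis, then the residual case of digraphs dominated by low-out-degree vertices would have to be handled by a separate structural / Stiebitz-style argument, and this residual case is where I most expect the approach to fall short of the full conjecture.
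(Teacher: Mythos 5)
Conjecture~\ref{conj:majority-colouring} is stated in the paper as an open conjecture; the paper does not prove it (and does not claim to). What the paper actually establishes is \cref{thm:majority-Gnp}, the special case for binomial random digraphs $\mathbb{D}(n,p)$, via a long argument combining personality-changing local analysis, a subcritical list-assignment process, and the list-colouring lemma of~\cite{ALSS21}. Your proposal is a plan of attack on the full conjecture, so I will assess it as such.

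There are two concrete places where I think the plan as written cannot work without a fundamentally new idea, and both are signposted in the paper itself. First, the Lyapunov-functional idea --- showing $\mb E[\Delta\Phi]<0$ for some potential $\Phi$ built from monochromatic-edge counts --- is exactly the monovariant technique that works for internal and external partitions of undirected graphs (the cut size strictly decreases under a single flip). The paper's discussion of the internal/external proof explicitly flags, in a footnote, that \emph{nothing analogous to this seems to be true for majority colourings of digraphs}: one can construct digraphs and colourings in which no single vertex recolouring decreases any reasonable energy, because the $d^-(v)$ new monochromatic in-edges created at $v$ can overwhelm the $\Theta(d^+(v))$ saved out-edges, and there is no a priori relation between $d^-(v)$ and $d^+(v)$. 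Adding a correction term $g_v$ and averaging over a random choice of violated vertex does not escape this: you would still need, at every step of the process, a globally negative expected drift, and no candidate $\alpha,\beta,g_v$ are exhibited for which this holds; the student's own discussion of the $d^-(v)/d^+(v)$ ratio being unbounded is precisely the obstruction.

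Second, the claim that ``after a short burn-in, the conditional distribution of the colours on $N^-(v)$ is close to uniform'' is the heart of the proposal, and for arbitrary digraphs there is no mechanism that would make it true. The personality-changing scheme in \cref{subsec:personality-changing} gives independence along the out-neighbourhood of a vertex only because $\mathbb{D}(n,p)$ locally converges to a Galton--Watson tree, so that the colour histories of the children of a root are genuinely independent; this fails badly in digraphs with short cycles or dense local structure. Moreover, even granting high girth, the paper's \cref{rem:7-9-11,rem:high-girth} point out that the recurrence does not close for every out-degree profile: $2Q_d'(0)>1$ for $d\in\{7,9,11\}$, so a digraph with many nearby vertices of out-degree $7$, $9$ or $11$ defeats the personality-changing analysis, and the authors explicitly say they do not see how to get around this without nontrivial bounds on the conditional probability in \cref{eq:conditional-probability}. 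Finally, the proposed fallback of a ``Stiebitz-style'' argument for the low-out-degree residual case is not available: Stiebitz's theorem is for undirected graphs, and the only digraph tool used in the paper for exceptional vertices is the $\chi\le 6$ list-colouring result of~\cite{ALSS21}, which applies only because the exceptional set in a random digraph is very sparse --- a structural fact with no analogue for a worst-case digraph. In short, your proposal isolates the right difficulties (in/out-degree asymmetry, the need for decorrelation, the residual low-degree case), but does not supply, and I do not think the paper's machinery can supply, a way around any of them for the general conjecture.
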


Progress on \cref{conj:majority-colouring} has come from a few different directions. First, Kreutzer,
Oum, Seymour, van der Zypen and Wood gave a simple proof that a majority
4-colouring always exists. They also observed that in various settings
\emph{random} 3-colourings can be useful, because in a random 3-colouring,
on average each vertex is the same colour as only a third of its neighbours.
Specifically, if an $n$-vertex digraph has minimum degree at least
about $\log n$, then a random 3-colouring is overwhelmingly likely
to be a majority colouring, and in a $d$-regular graph with $d\ge144$,
one can use the Lov\'asz Local Lemma to show that a random 3-colouring
has positive probability of being a majority 3-colouring. Also, via
consideration of a \emph{list-colouring} version of the problem, Anastos,
Lamaison, Steiner and Szab\'o~\cite{ALSS21} managed to prove the majority
colouring conjecture for directed graphs which have chromatic number\footnote{Here the chromatic number of a directed graph is simply the chromatic number of the graph obtained by removing the directions on the edges.}
at most 6.

Certain variations on the theme of majority colouring have also been
considered by various authors (see for example \cite{ABG17,GKP17,KS18,ABGGPZ20,ABGGPZ22,XSXCW22}).

\subsection{Results}

To state our results, it is convenient to introduce some notation. Given $\varepsilon\in (0,1)$ we say that a red-blue colouring of an $n$-vertex graph is an \emph{$\varepsilon$-almost-external
partition} (respectively, \emph{$\varepsilon$-almost-internal partition})
if all but $\varepsilon n$ vertices have at least half of their neighbours
in the opposite colour (respectively, the same colour). Say that a
vertex-colouring of an $n$-vertex directed graph is an \emph{$\varepsilon$-almost-majority
colouring} if all but $\varepsilon n$ vertices are majority-coloured.

Our first result is an approximate version of the internal partition conjecture
(\cref{conj:ban-linial}): we prove a weakening where a small number of exceptional vertices are allowed. On the other hand, we can demand that our partition is a bisection, and instead of considering $d$-regular graphs, we can consider the more general class of graphs whose maximum degree is at most $d$.
\begin{theorem}
\label{thm:ban-linial}Fix $d\in\mb{N}$ and $\varepsilon>0$. Then
there are only finitely many graphs with maximum degree at most $d$
which do not have an $\varepsilon$-almost-internal bisection. There
are also only finitely many such graphs which do not have an $\varepsilon$-almost-external
bisection.

In both cases, the desired bisections can be found via a randomised algorithm whose expected runtime is linear in the number of vertices.
\end{theorem}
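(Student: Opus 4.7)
The plan is to attack both the external and internal cases via randomised local search on balanced bisections. I focus on the external case; the internal case is obtained by a fully dual argument, reversing which inequality the swap enforces so that each swap strictly \emph{decreases} the cut rather than increasing it. The algorithm has two stages: first, sample a uniformly random balanced bisection $(A, B)$ of $V(G)$ in $O(n)$ time; second, while there exist externally bad vertices $v \in A$ and $u \in B$, swap their colours. A direct edge count shows that each such swap changes the cut size by
$$\Delta e(A, B) = (d_A(v) - d_B(v)) + (d_B(u) - d_A(u)) + 2\one[vu \in E(G)] \geq 2,$$
where the inequality uses that $d_A(v) - d_B(v) \geq 1$ and $d_B(u) - d_A(u) \geq 1$ since $v,u$ are both externally bad. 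Hence the cut is strictly monotone increasing and bounded by $|E(G)| \leq dn/2$, so the swap loop terminates after at most $dn/4 = O(n)$ iterations. Each iteration updates the states of the $O(d)$ affected vertices, and with standard queue-based data structures the total runtime is $O(n)$.

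At termination, at least one of the remaining ``bad'' sets $T_A \subseteq A$ and $T_B \subseteq B$ is empty — otherwise, some valid swap would still be available. Assume WLOG $T_B = \emptyset$, so all externally bad vertices lie in $A$. The crux of the proof is to bound $|T_A| \leq \varepsilon n$ with high probability over the initial random bisection. My plan combines three ingredients: (a) an Azuma--Hoeffding concentration bound for the vertex-exposure martingale (with bounded increments from the max-degree bound) shows that the initial cut $e(A_0, B_0)$ lies within $O(\sqrt{n \log n})$ of $|E(G)|/2$; (b) the terminal structural constraints — in particular $e(A, B) \geq 2 e(B)$ (summing over all externally good $u \in B$) and $\sum_{v \in T_A}(d_A(v) - d_B(v)) \geq |T_A|$ — pin the terminal cut value in terms of $|T_A|$; and (c) combining these should force $|T_A| > \varepsilon n$ to require a cut deviation incompatible with the concentration in (a), yielding the desired contradiction.

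The main obstacle will be this quantitative step (c): converting the local combinatorial inequalities into a cut gap of size $\Omega(\varepsilon n)$ that comfortably exceeds the $O(\sqrt{n \log n})$ fluctuations of the initial random bisection. The max-degree bound should be essential here, as it prevents many bad vertices from ``hiding'' in the stuck configuration without producing a visible cut imbalance. If this direct approach falls short, I would augment the swap rule with secondary ``cut-neutral'' moves designed specifically to shrink $|T_A|$ — for instance pairing a bad $v \in T_A$ with a robustly-externally-good $u \in B$ whose neighbourhood is already comfortably satisfied so that no new bad vertices are introduced — or perform a constant number of randomised restarts with fresh initial bisections to boost the success probability.

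The internal case proceeds along the same lines: run the swap loop on pairs of internally bad vertices, and observe that each such swap strictly decreases the cut by at least $2$ via the same calculation with the roles of ``same colour'' and ``opposite colour'' reversed. The analysis of the stuck configuration and the concentration-based bound on the size of the remaining bad set carries over after reversing the inequalities in (b), so both halves of the theorem follow from essentially the same framework.
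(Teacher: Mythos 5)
Your proposal takes a genuinely different route from the paper, but it has a gap at exactly the point you yourself flag as ``the main obstacle,'' and I do not believe that gap can be filled along the lines you sketch.

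The mechanics of your swap step are fine: swapping a bad $v\in A$ with a bad $u\in B$ does increase the cut by at least $2$ (your formula for $\Delta e(A,B)$ is correct), so the process terminates after $O(n)$ swaps with all remaining bad vertices on one side, say $T_B=\emptyset$. The problem is step (c). The terminal structural constraints you list point the wrong way. Summing $\phi(v)=d_{\mathrm{opp}}(v)-d_{\mathrm{same}}(v)$ over all vertices gives $4e(A,B)-2|E|=\sum_v\phi(v)$. All good vertices contribute $\phi(v)\ge 0$ and the bad ones contribute $\phi(v)\ge -d$, so the only thing you can extract is a \emph{lower} bound $e(A,B)\ge |E|/2 - d|T_A|/4$. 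Meanwhile, your ingredient (a) shows that the \emph{initial} cut is about $|E|/2$, and since the cut only increases, the terminal cut is also $\ge |E|/2 - o(n)$. These two facts are mutually consistent no matter how large $|T_A|$ is; there is no contradiction to be had. To get a contradiction you would need an \emph{upper} bound on the terminal cut in terms of $|T_A|$, and the structure of the stuck configuration simply does not give one: the good vertices in $A$ and $B$ can contribute anywhere between $0$ and $dn$ to $\sum_v\phi(v)$, completely swamping the contribution of $T_A$. In short, concentration of the initial cut plus ``terminal cut $\ge$ initial cut'' says nothing about $|T_A|$.

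The deeper structural issue is that your process runs for $\Theta(n)$ steps, and after that many deterministic greedy moves the terminal configuration can depend on the initial randomness in an uncontrolled way, so you cannot hope to run a concentration argument on the \emph{terminal} state directly (bounded differences would be of order $d^{\Theta(n)}$). This is precisely the problem the paper's batched (``lazy'') process is designed to avoid: by flipping a $p$-fraction of bad vertices per round for only $K=O_{d,\varepsilon}(1)$ rounds, the paper keeps the Lipschitz constant of $|A_t|$ at $d^K=O(1)$, so Azuma applies to the terminal partition sizes, and near-balance can then be restored by hand with a small additional cost. It also deliberately forgoes exact balance during the process for this reason, rather than maintaining it by pair-swaps as you do. Your fallback ideas (cut-neutral secondary moves, randomised restarts) do not repair this: restarts only help once you have a positive constant success probability per run, which is the very thing step (c) was supposed to supply; and the cut-neutral moves would require a new termination and convergence analysis you have not given.

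So the claim ``at termination, $|T_A|\le\varepsilon n$ whp'' is unproved and I do not see how your ingredients could prove it. To salvage the approach you would essentially need to discover the paper's batching idea (or some substitute for it), which is the heart of the argument.
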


Second, turning our attention to random graphs $\mb G(n,p_n)$, we prove that in the very sparse regime (where $p_n$ is of order at most $1/n$), whp there is an $o(1)$-almost-internal bisection and $o(1)$-almost-external bisection. This complements the result of Dandi, Gamarnik and Zdeborov\'a discussed in \cref{subsec:bisections}, which handles the regime where $p_n$ is of larger order than $1/n$.
\begin{theorem}
\label{thm:random}For any $p_{n}\in[0,1]$ such that $\lim\sup np_n<\infty$, whp $G\sim\mathbb{G}(n,p_{n})$ has an $o(1)$-almost-internal
bisection and an $o(1)$-almost-external bisection.

In both cases, the desired bisections can be found via a randomised algorithm whose expected runtime is linear in $n$. 
\end{theorem}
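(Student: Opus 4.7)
The plan is to reduce \cref{thm:random} to \cref{thm:ban-linial}. We will show that for every fixed $\varepsilon > 0$, whp $G \sim \mathbb{G}(n,p_n)$ has an $\varepsilon$-almost-internal bisection and an $\varepsilon$-almost-external bisection; a standard diagonalisation then yields the claimed $o(1)$-almost-bisection.

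Let $c := \limsup_n np_n < \infty$ and fix $\varepsilon > 0$. The idea is to discard a small set of high-degree vertices and apply \cref{thm:ban-linial} to what remains. Choose a large constant $d = d(c, \varepsilon)$. Since vertex degrees in $\mathbb{G}(n,p_n)$ are stochastically dominated by $\mathrm{Bin}(n, c/n)$, which has Poisson-type tails, a Chebyshev-type second-moment argument (using that cross-covariances of degree indicators in $\mathbb{G}(n,p)$ are of order $p$ per pair) yields that whp
\[
|V_{>d}| \le \varepsilon n/10 \quad\text{and}\quad \sum_{v \in V_{>d}} \deg_G(v) \le \varepsilon n/10,
\]
where $V_{>d} := \{v : \deg_G(v) > d\}$. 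Set $V_{\le d} := V(G) \setminus V_{>d}$ and $H := G[V_{\le d}]$ (whose maximum degree is at most $d$), and let $B \subseteq V_{\le d}$ be the set of low-degree vertices having at least one $G$-neighbour in $V_{>d}$. Since $|B|$ is bounded by the number of edges between $V_{\le d}$ and $V_{>d}$, we obtain $|B| \le \sum_{v \in V_{>d}} \deg_G(v) \le \varepsilon n/10$.

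For $n$ sufficiently large, $|V(H)| \ge n/2$ exceeds the maximum size of any graph in the finite exceptional family of \cref{thm:ban-linial} with parameters $d$ and $\varepsilon/10$, so that theorem's linear-time algorithm produces an $(\varepsilon/10)$-almost-internal (respectively $(\varepsilon/10)$-almost-external) bisection $\chi_H$ of $H$. Extend $\chi_H$ to a colouring $\chi$ of $V(G)$ by colouring the $o(n)$ vertices of $V_{>d}$ so that $\chi$ is a bisection of $G$ (this is possible because $|V_{>d}| = o(n)$). Every $v \in V_{\le d} \setminus B$ has the same neighbourhood in $G$ as in $H$, and is therefore internally (respectively externally) coloured in $G$ if and only if it is so in $H$. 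The number of bad vertices in $G$ is thus at most $|V_{>d}| + |B| + (\varepsilon/10)|V(H)| \le (3\varepsilon/10)n \le \varepsilon n$, and the expected runtime is linear in $n$ (computing $V_{>d}$ and $B$ takes time $O(|E(G)|) = O(n)$ whp, and \cref{thm:ban-linial}'s algorithm runs in expected linear time).

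Granted \cref{thm:ban-linial}, the proof of \cref{thm:random} is essentially a reduction, and the only nontrivial step is the second-moment concentration of $\sum_{v \in V_{>d}} \deg_G(v)$, which one handles by splitting into diagonal and off-diagonal contributions and noting that the covariance of the contributions of any two distinct vertices comes only through the single shared edge. The genuine obstacle in this circle of ideas lies in \cref{thm:ban-linial}; we expect its proof to use the ``random recolouring processes'' advertised in the abstract.
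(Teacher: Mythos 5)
Your proposal is correct and takes essentially the same route as the paper: the paper's proof of \cref{thm:random} is precisely this reduction to \cref{thm:ban-linial}, deleting a high-degree set $V_{\ge d}$ with $|V_{\ge d}|\le\varepsilon n$ and $\sum_{v\in V_{\ge d}}\deg(v)\le\varepsilon n$ (the paper cites an external lemma for this concentration step, whereas you sketch a second-moment argument), applying \cref{thm:ban-linial} to the bounded-degree remainder, and extending arbitrarily to a bisection of $G$. Your accounting via the set $B$ of low-degree vertices with a high-degree neighbour makes explicit what the paper leaves implicit, but the argument is the same.
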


Next, consider the binomial random \emph{directed}
graph $\mathbb{D}(n,p)$ which has $n$ vertices, and each of the $n(n-1)$
possible directed edges are present with probability $p$ independently.
We prove the majority colouring conjecture for binomial random directed
graphs of any density. (This is most interesting in the very sparse case; as we discussed
in \cref{subsec:digraphs}, \emph{any} dense digraph can be easily shown to have
a majority 3-colouring).

\begin{theorem}
\label{thm:majority-Gnp}For any $p_{n}\in[0,1]$ (allowed to
depend on $n$), whp $D\sim\mathbb{D}(n,p_{n})$ has a majority 3-colouring.

This colouring can be found via a randomised algorithm whose expected runtime is linear in $n$.
\end{theorem}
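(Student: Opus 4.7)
The plan is to split the proof into two density regimes based on the expected out-degree $d := np_n$.

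\textbf{Dense regime ($d \geq C\log n$ for a large constant $C$).} I would show that a uniformly random 3-colouring of $V(D)$ is a majority 3-colouring whp. By Chernoff, whp every vertex has out-degree at least $d/2$. For any vertex $v$ with $d^+(v) \geq d/2$, the number of out-neighbours sharing $v$'s colour under a uniform 3-colouring is distributed as $\mathrm{Bin}(d^+(v), 1/3)$; by Chernoff this exceeds $d^+(v)/2$ with probability $\exp(-\Omega(d))$, which is $o(1/n)$ once $C$ is large enough. A union bound over the $n$ vertices gives a majority colouring whp, and the algorithm simply outputs a uniformly random 3-colouring in $O(n)$ time.

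\textbf{Sparse regime ($d < C\log n$).} Here many vertices have low out-degree and uniform colouring fails locally: a vertex of out-degree $1$ is already bad with probability $1/3$, and such vertices abound. My plan is to combine a uniform random 3-colouring with a local repair procedure. After sampling the initial random colouring, iteratively repair bad vertices: for each bad $v$, resample $c(v)$ (say, uniformly in $\{1,2,3\}$, or uniformly among the colours that make $v$ good; note that such a colour always exists because the least-common colour appears at most $\lfloor d^+(v)/3\rfloor \leq d^+(v)/2$ times). The resampling may cascade, since changing $c(v)$ can flip an in-neighbour $u$ from good to bad. I would bound the total cascade cost by coupling with a (subcritical) branching process: whp $D$ is locally tree-like with in-degrees of size $O(\log n/\log\log n)$, and for an appropriate randomised resampling rule the expected number of in-neighbours that switch from good to bad per single recolouring is strictly less than $1$. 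Hence the cascade terminates in expected $O(1)$ steps per initial bad vertex, and the total repair cost is $O(n)$ in expectation.

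\textbf{Main obstacle.} The main technical difficulty lies in the sparse regime, specifically in controlling the cascade of repairs. A naive deterministic repair (always moving to a strict minority colour) could loop indefinitely---recolouring $v$ fixes $v$ but breaks an in-neighbour $u$; recolouring $u$ breaks $v$ again. Randomising the repair step (in the spirit of the ``personality-changing'' scheme hinted at in the abstract) should break such loops, but proving whp convergence in expected linear time requires a delicate structural analysis of $\mathbb{D}(n,p_n)$: most cascades live in tree-like regions and die out quickly, whereas dense substructures (short directed cycles sharing many out-neighbours) are whp few and can be handled by direct case analysis of $O(1)$-size configurations. Some additional care is needed at the boundary $d = \Theta(\log n)$ to ensure the dense and sparse arguments glue together.
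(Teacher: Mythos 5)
Your dense-regime argument is correct and matches the paper's treatment of $p \ge 200\log n/n$. But your sparse-regime plan has a central unjustified step --- indeed the central difficulty of the whole theorem --- namely the claim that ``for an appropriate randomised resampling rule the expected number of in-neighbours that switch from good to bad per single recolouring is strictly less than $1$.'' The naive version of this is false. When $v$'s colour is changed, the probability that an in-neighbour $u$ with out-degree $d$ flips from good to bad is essentially the probability that $u$ is in a ``critical'' configuration; the resulting union-bound recurrence $f_t \le 3 P_\lambda(f_{t-1})$ (with $\lambda = np_n$) is \emph{supercritical} for roughly $3 \le \lambda \le 20$. The personality-changing device you allude to reduces the factor $3$ to $2$ by coupling the recolouring decision to an auxiliary Markov chain whose state is independent of the overtaking history, but even then the per-degree quantity $2Q_d'(0)$ exceeds $1$ for $d \in \{7,9,11\}$; only after averaging against the Poisson degree distribution does one obtain $2 P_\lambda'(0) \le 0.9999$ uniformly in $\lambda$, and that bound requires a computer-assisted calculation. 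None of this falls out of ``resample and couple with a branching process.''

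Even granting subcriticality, your single sequential repair cascade does not terminate cleanly on directed cycles, and your fix (``case analysis of $O(1)$-size configurations'') is too weak, since the offending cycles need not be of bounded size. The paper's proof for $p = \Theta(1/n)$ in fact runs in three distinct phases: a \emph{parallel} recolouring process with personality-changing, run for $(\log\log n)^2$ rounds and analysed via local weak convergence to a Galton--Watson tree, which yields only an \emph{approximate} majority colouring; a deterministic ``list-assignment'' cascade (analysed via a marked configuration model and ``virtual'' processes) which grows a set $U$ of exceptional vertices while certifying that vertices outside $U$ stay majority-coloured however $U$ is recoloured, with $|U| = o(n)$ established by branching-process large deviations; and a final list-colouring step exploiting $2$-degeneracy of small vertex subsets plus a lemma adapted from Anastos--Lamaison--Steiner--Szab\'o to complete the colouring on $U$. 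Collapsing these into one repair cascade from a uniform initial colouring skips nearly all of this machinery, and the plan also misses that the intermediate regime $\omega(1/n) \le p \le O(\log n / n)$ needs a separate (cruder) argument.
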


We will say more about our proof techniques in \cref{sec:outline}, but to give a very brief impression: first, \cref{thm:random,thm:ban-linial} are proved in a unified way, via analysis of a random recolouring process (essentially, a ``lazy'' version of majority or minority dynamics). We also consider a random recolouring process in our proof of \cref{thm:majority-Gnp} (to find an \emph{approximate} majority-3-colouring), but some additional twists are required (in particular, we use a \emph{personality-changing} scheme: we ``forget'' certain information based on the state of a Markov chain, giving us more independence to work with). After finding our approximate majority 3-colouring, we modify it to obtain a genuine majority 3-colouring via analysis of a ``subcritical list-assignment process'', and some list-colouring ideas of Anastos, Lamaison, Steiner and Szab\'o~\cite{ALSS21}. Our subcriticality analysis also involves some new ideas, including a notion of a ``virtual process'' which simulates a small portion of our process under consideration.

In the spirit of the conjectures in \cref{subsec:internal,subsec:bisections}, we also show that for almost
every digraph, \emph{two} colours are \emph{almost} enough for a majority
colouring. In fact, it suffices to consider \emph{bisections}, where the numbers of red and blue vertices are as equal as possible.
\begin{theorem}
\label{thm:majority-2-colouring}
Let $p_n$ be such that $np_n(1-p_n)\to\infty$. Then whp $D\sim\mathbb{D}(n,p_n)$ has an $o(1)$-almost-majority bisection.
\end{theorem}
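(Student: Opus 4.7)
The plan is a first-and-second moment argument on the number of $\varepsilon_n$-almost-majority bisections, in the spirit of the Minzer-Sah-Sawhney analysis of external bisections in $\mathbb{G}(n,1/2)$, and of Dandi-Gamarnik-Zdeborov\'a for sparser regimes. Let $N = N_{\varepsilon_n}$ denote the number of bisections of $V(D)$ that are $\varepsilon_n$-almost-majority for a sequence $\varepsilon_n = o(1)$ tending to $0$ slowly (say $\varepsilon_n = 1/\log n$); the goal is to show $\Pr[N > 0] \to 1$.

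For the first moment, fix a bisection $\sigma$ and consider the out-imbalance $Y_v(\sigma) = \sigma_v \sum_{u \in N^+(v)}\sigma_u$ at each vertex $v$. Crucially, the edge sets $\{v \to u : u \ne v\}$ across distinct source vertices are disjoint, so the $Y_v$ are mutually independent as functions of the random edges of $D$. Each $Y_v$ is a sum of centred Bernoullis with mean $-p_n$ and variance $\Theta(np_n(1-p_n))$; under the hypothesis $np_n(1-p_n)\to\infty$ the mean is dominated by the standard deviation, so $\Pr[Y_v > 0] = 1/2 + o(1)$ by the central limit theorem. Hence $|\{v : Y_v > 0\}|$ is distributed as a sum of independent Bernoullis of means $\approx 1/2$, and standard large-deviation bounds give $\Pr[\sigma \text{ is } \varepsilon_n\text{-almost-majority}] \approx 2^{-n(1-H_2(\varepsilon_n))}$, where $H_2$ is the binary entropy function. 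Combined with $\binom{n}{\lfloor n/2\rfloor}\approx 2^n/\sqrt n$, this yields $\mathbb{E}[N] \approx 2^{nH_2(\varepsilon_n)}/\sqrt n$, which diverges whenever $nH_2(\varepsilon_n)\to\infty$ (as holds for our choice of $\varepsilon_n$).

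For the second moment I would parametrise pairs of bisections $(\sigma,\tau)$ by their normalised overlap $q = \langle\sigma,\tau\rangle/n \in [-1,1]$. For each pair, the joint distribution of $(Y_v(\sigma), Y_v(\tau))$ at a vertex $v$ is approximately bivariate Gaussian with covariance proportional to $q$, so summing over bisection pairs turns $\mathbb{E}[N^2]$ into a Laplace-type sum/integral over $q$. The target bound $\mathbb{E}[N^2] \le (1+o(1))\mathbb{E}[N]^2$ then follows by showing the integral is dominated by $q \approx 0$, in which regime $\sigma$ and $\tau$ behave near-independently; Paley-Zygmund then gives $\Pr[N > 0] \to 1$.

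The hard part is controlling the second moment at intermediate and high overlaps $|q|$, where the two bisections' goodness events can be strongly correlated. For $p_n$ bounded away from $0$ and $1$ this parallels the ground-state structure of the Sherrington-Kirkpatrick spin glass; for sparser regimes (where $p_n \to 0$ but still $np_n(1-p_n)\to\infty$), a Lindeberg-style replacement scheme, as used by Dandi, Gamarnik and Zdeborov\'a in the undirected setting, can reduce to a Gaussian-weighted directed model in which rotational invariance makes the required overlap integrals tractable. A convenient alternative would be a random-recolouring/swap process (in the spirit of the proofs of \cref{thm:ban-linial,thm:random}) whose stationary distribution is the tilted distribution concentrated on near-optimal bisections, bypassing the need for the full second-moment expansion.
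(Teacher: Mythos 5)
Your first-and-second-moment plan is the right starting point and matches the paper's skeleton, but there is a crucial missing step: the second-moment bound you are aiming for, $\mb E[N^2]\le(1+o(1))\mb E[N]^2$, is almost certainly false and is not what the paper proves. The paper only establishes (\cref{lem:moments}) the much weaker statement $\mb E X^2\le e^{o(n)}$ for $X$ the number of exact majority bisections, which via Paley--Zygmund gives only $\Pr[X>0]\ge e^{-o(n)}$ — \emph{not} a whp statement. The obstruction is that the exponential rate of $\mb E[X^2]$ over the overlap integral matches that of $(\mb E X)^2$ at overlap $\alpha=1/2$ (this is \cref{lem:calculus-inequality}, where $f(1/2)=0$), but the subexponential prefactors do not balance up to $1+o(1)$; there is no ``dominated by a Gaussian bump'' argument that rescues the strong bound here. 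Your proposal stops exactly where the hard part of the proof begins.

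The missing idea is Frieze's concentration boosting trick (which the paper attributes to \cite{Fri90} and which is also used in \cite{DGZ}, so you were close in citing the right source). Define $Z$ to be the minimum \emph{defect} among all bisections, where the defect counts how much each non-majority-coloured vertex falls short. Since adding or deleting a single arc changes $Z$ by at most $2$, an Azuma/bounded-difference inequality gives that $Z$ is concentrated around $\mb E Z$ within a window of width $\sqrt{d^*}n\cdot o(1)$ (\cref{lem:defect-concentration}). Combining $\Pr[Z=0]\ge e^{-o(n)}$ with this concentration forces $\mb E Z=o(\sqrt{d^*}n)$, hence whp $Z=o(\sqrt{d^*}n)$. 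A separate anticoncentration argument (\cref{lem:few-low-defect}) then shows that a bisection with total defect $o(\sqrt{d^*}n)$ can have only $o(n)$ vertices with small positive defect, so it is an $o(1)$-almost-majority bisection.

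Two smaller issues. First, the paper counts \emph{exact} majority bisections rather than $\varepsilon_n$-almost-majority ones, and the entire $o(1)$ slack comes out of the defect argument at the end; your set-up where the slack enters the first moment works at the level of computing $\mb E[N]$ but makes the second-moment and boosting steps messier, since the defect formulation gives an additive (easily concentrated) quantity. Second, your suggested alternative via a random swap process is unlikely to work: the paper observes (\cref{remark:OGP}) that the set of $o(1)$-almost-majority bisections has the overlap gap property, a standard heuristic certificate for computational intractability of local-search-type algorithms, and the authors explicitly note they do not have a constructive proof of this theorem.
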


Unlike \cref{thm:ban-linial,thm:random,thm:majority-Gnp}, we do \emph{not} have a constructive proof of \cref{thm:majority-2-colouring} (our proof uses the second moment method, proceeding along similar lines to the work of Dandi, Gamarnik and Zdeborov\'a~\cite{DGZ}). In fact,
we believe that it is computationally intractable to find almost-majority
bisections in random directed graphs: our proof of \cref{thm:majority-2-colouring} can be modified to show that $o(1)$-almost-majority bisections satisfy the so-called \emph{overlap gap property}, introduced by Gamarnik (see for example \cite{Gam21}) as a heuristic certificate for computational intractability (see \cref{remark:OGP}).

\begin{remark}
    Regarding all of our theorems about random (di-)graphs (\cref{thm:majority-Gnp,thm:majority-2-colouring,thm:random}): we remark that there are two slightly different models of random graphs that are often collectively referred to as ``Erd\H os--R\'enyi random graphs''. We could either fix some $p\in [0,1]$ and include each (directed) edge with probability $p$ independently, or we could fix some integer $m$ and choose a random (di-)graph with exactly $m$ edges\footnote{The former type of random graph was actually first considered by Gilbert~\cite{Gil59}, slightly earlier than the groundbreaking work of Erd\H os and R\'enyi~\cite{ER60}, which considered the second model.}. There are seldom any important differences between the models (e.g., $G\sim \mb G(n,p)$ usually has about $p\binom n 2$ edges, and is essentially the same as a uniformly random graph with exactly $\lfloor p\binom n 2\rfloor$ edges). All the results in this paper hold equally well for both models, with minor changes to the proofs.
\end{remark}

\subsection{Further directions}\label{subsec:further}We believe that \cref{thm:ban-linial} is an important step on the path to a full proof
of the internal partition conjecture. We were able to obtain an approximate internal partition by running a random recolouring process for a small number of steps; it seems plausible that such processes, if run for long enough, tend to converge on an exact internal partition. However, the longer we wish to run such a process, the harder it is to analyse its behaviour. Related issues are encountered in the study of majority dynamics (see for example \cite{BCOTT16}): there are a number of open problems concerning the long-term behaviour of majority dynamics in various settings.

It might be possible to sidestep the above issue, and to combine an approximate result of the type in \cref{thm:ban-linial} with a separate ``completion'' step (we were successful in doing this in our proof of \cref{thm:random}). In particular, it may be possible to prepare ``gadgets'' separately to our random recolouring process, that assist with transforming an approximate solution into an exact one (cf.\ the \emph{absorption} method; see \cite{Sze13}). Note that one can always obtain an internal partition by iteratively making local improvements; the challenge is to ensure that this internal partition is nontrivial. So, we might imagine some arrangement of gadgets that ``protects'' a subset of vertices, ensuring that it can never become all-red or all-blue. However, it is unclear how to actually implement this type of idea, without making very strong assumptions about the structure of our graph.

We also envision a path to the full majority colouring conjecture (for arbitrary digraphs) via random recolouring processes,
but for this our random recolouring analysis needs to be made much
more robust. \cref{thm:majority-Gnp} is stated only for random digraphs, but as discussed in \cref{rem:high-girth}, our methods could conceivably be
generalised to arbitrary digraphs with high \emph{girth}. Without
a girth assumption it seems one would need quite different methods
to analyse random recolouring processes on digraphs.

Also, \cref{thm:majority-2-colouring} suggests that there is a lot of ``room'' in the majority
colouring conjecture, and that two colours are very nearly enough.
For example, we see no obvious reason why a Stiebitz-type theorem for ``near-majority''
2-colourings (or even bisections) should not be possible, as follows.
\begin{question}
Is it true that \emph{every} digraph $D$ has a bisection (or at least a 2-colouring) such that
each vertex has at most one more out-neighbour in its own colour than
the opposite colour?
\end{question}

\section{Outline of the paper and proofs}\label{sec:outline}

In this section we sketch the ideas in the proofs of \cref{thm:majority-Gnp,thm:ban-linial,thm:random,thm:majority-2-colouring}. The proof of \cref{thm:majority-Gnp} is much more involved than the others,
largely because \cref{thm:majority-Gnp} is about \emph{exact} majority-colourings, while the other theorems are only concerned with \emph{approximate} internal/external/majority colourings. 

\subsection{Majority 3-colouring of random digraphs} As a starting point for our proof of \cref{thm:majority-Gnp}, note that if we consider a \emph{uniformly random} 3-colouring, then every vertex is majority-coloured with probability at least 2/3. (This probability gets closer and closer to 1 as the out-degrees get larger, i.e., as the arc-sampling probability $p$ in $\mb D(n,p)$ gets larger.)

In the regime where $p$ has order of magnitude $1/n$ (which is our main regime of interest), in a random 3-colouring we expect that a non-negligible fraction of vertices will fail to be majority-coloured. We can hope to improve the situation by \emph{randomly recolouring} those vertices to a different colour. One can do an explicit calculation to see that the expected proportion of majority-coloured vertices does increase after such a recolouring, though we still expect there to be some vertices which are not majority-coloured (for example, the recolouring could have caused a vertex which was originally majority-coloured to no longer be majority-coloured).

\subsubsection{Random processes}
It is then natural to consider a random greedy recolouring \emph{process}, which repeatedly checks which vertices fail to be majority-coloured, and randomly recolours them: one may hope that this process tends to converge to a majority 3-colouring.
In principle, one can explicitly compute the expected proportion of majority-coloured vertices after any finite number of steps, but the formulas get out of hand very rapidly (na\"ively, the complexity of the formulas grows doubly-exponentially in the number of steps, though since the process is Markovian, one can use ideas of Lacker, Ramanan and Wu~\cite{LRW} to reduce this to a single-exponential dependence).

It is however possible to prove a (somewhat crude) \emph{recursive upper bound} on the (asymptotic) expected proportion $f_t$ of non-majority-coloured vertices at time $t$ (where we view $t$ as being fixed while $n\to\infty$), as follows. First, note that our process is ``local'': for the colour of a vertex $w$ to influence the colour of a vertex $v$ in $t$ steps, there must be a directed path from $v$ to $w$ of length at most $t$. Sparse random graphs are known to have very few short cycles, so in a typical outcome of $D\sim \mb D(n,p)$, there are very few pairs of length-$t$ directed paths that intersect after starting at two different out-neighbours of the same vertex. We can use this to deduce that for most vertices $v$, the colours of the out-neighbours of $v$ (until time $t$) are \emph{independent}. Now, note that whenever a vertex $v$ is not majority-coloured after $t$ steps, it must have happened that some colour ``overtook'' as the majority colour at time $t$ (i.e., that colour appeared on at most half of the out-neighbours of $v$ at time $t-1$, then more than half at time $t$). Since a $f_{t-1}$-fraction of vertices change their colours between time $t-1$ and time $t$, and since almost all vertices have out-neighbours with independent colours, we can explicitly compute (in terms of $f_{t-1}$) the expected proportion of vertices witnessing an overtaking event, which gives us an upper bound for $f_t$.

\subsubsection{Personality-changing}If $p=\lambda/n$ with (say) $\lambda>20$, we can perform some careful analysis on fixed points of the above recurrence, to deduce that $\lim_{t\to \infty}f_t=0$. However, our recurrence is not strong enough to handle all $p$ (e.g., when $p=10/n$, we can prove that our recurrence converges to a nonzero fixed point). Clearly, our recurrence is wasteful (essentially, we are computing the probability that \emph{any} colour overtakes at a vertex $v$, as an upper bound on the probability that \emph{the colour of~$v$} overtakes at~$v$). However, due to a lack of independence it does not seem tractable to modify our recurrence to take this inefficiency into account.

Instead, we modify the process to intentionally ``forget'' pertinent information, in a way that seemingly makes it perform worse, but which introduces independence that makes it possible to prove a stronger recurrence. Specifically, we introduce an auxiliary Markov chain at each vertex which describes the ``personality'' of the vertex at a given point in time (the personality describes whether all available information is used to decide whether to change colour, or whether certain information is intentionally ignored). With these ideas, we are able to prove a stronger recurrence which allows us to prove $\lim_{t\to \infty}f_t=0$ without a lower bound on $p$ (here $f_t$ is the asymptotic expected proportion of non-majority-coloured vertices after time $t$, in our modified recolouring process).

\subsubsection{List colouring and subcriticality}
We are not yet done: with the above ideas, one can only find a 3-colouring of $D\sim \mb D(n,p)$ such that \emph{almost all} vertices are majority-coloured. Indeed, there will typically be a small number of vertices whose local neighbourhood has pathological structure not amenable to the above analysis, and there is a limit on the number of steps we can control before our recursive analysis breaks down (we can let $t$ grow with $n$, but not very rapidly).

It is well-known that small subsets of sparse random graphs tend to have very simple structure (in $D\sim \mb D(n,p)$, whp any set of $o(n)$ vertices has average out-degree at most $1+o(1)$), so we can hope to take advantage of this structure to ``manually fix'' the ``exceptional'' vertices which are not majority-coloured. In particular, it is not hard to show that the subgraph induced by the exceptional vertices has chromatic number at most 6; recall from \cref{subsec:digraphs} that (as proved by Anastos, Lamaison, Steiner and Szab\'o~\cite{ALSS21}), such digraphs have a majority 3-colouring. Of course, it does not suffice to find a majority 3-colouring of the exceptional vertices in isolation: we must make sure that the exceptional vertices are coloured in a way that is ``compatible'' with the previously coloured non-exceptional vertices. It turns out that we will indeed be able to use the ideas of \cite{ALSS21}, but significant additional work is required.

First, instead of directly using the main result of \cite{ALSS21}, we extract a more general statement from its proof: if we assign a \emph{pair} of colours (i.e., a list of size 2) to each of the vertices of a digraph, and if for each possible list there are no directed cycles among the vertices with that list, then we can find a majority colouring assigning each vertex a colour from its list.

In order to actually apply this result we need a lot more information about the colouring produced by our random process (specifically, among the nonexceptional vertices which are already majority-coloured, we need to understand whether they would become non-majority-coloured if certain choices were made for the colours of the exceptional vertices). To this end we define a ``list-assignment process'' which expands the set of exceptional vertices, assigning lists as it goes, and ensuring that the non-exceptional vertices are majority-coloured no matter what colour is assigned from the lists of the exceptional vertices.

In order to study this list-assignment process (and in particular, to show that only a small number of vertices are assigned lists), we need a number of different ideas. In particular, we introduce the notion of a ``virtual process'' which ``simulates'' a small part of our actual list-assignment process of interest. We are able to show (via comparison to subcritical branching process, and a union bound) that whp \emph{all possible} virtual list-assignment processes do not introduce too many exceptional vertices, and we are separately able to show that our list-assignment process can be ``covered'' by a small number of virtual processes (roughly speaking, we need to show that two different types of growth are bounded in terms of each other).

We remark that related ``self-bounding via subcriticality'' ideas appeared in previous work of Cooley, Lee and Ravelomanana~\cite{CLR}, studying \emph{warning propagation} on random graphs. Also, it is worth noting that the algorithmic proof of the \emph{Lov\'asz Local Lemma} due to Moser and Tardos~\cite{MT10} (which has already been applied to the majority colouring conjecture~\cite{KOSZW17}) proceeds by a related subcriticality analysis of a certain ``recolouring process''; our recolouring process can be viewed as being more efficient but much more difficult to analyse.

\subsection{Non-constructive majority 2-colouring of random digraphs}
The proof of \cref{thm:majority-2-colouring} proceeds along very similar lines as the proofs in \cite{DGZ,GL18}, but it turns out that the relevant computations are much easier in the setting of random digraphs than the setting of random graphs. Most of the effort goes towards estimating the  second moment of the number of majority bisections, which boils down to a large-deviations computation. This estimate is not strong enough to prove \cref{thm:majority-2-colouring} directly, but it can be ``boosted'' using a concentration trick due to Frieze~\cite{Fri90}.

\subsection{Internal and external bisections}
The proofs of \cref{thm:ban-linial,thm:random} are essentially the same as each other. For concreteness, we discuss the ``internal'' part of \cref{thm:ban-linial} (i.e., we describe how to find an $\varepsilon$-internal bisection in an $n$-vertex graph with maximum degree at most $d$).

The crucial observation is that internal partitions correspond precisely to cuts which are locally minimal: if we start with any cut which does not correspond to an internal partition, then it is possible to flip the colour of some vertex to decrease the size of the cut\footnote{Nothing analogous to this seems to be true for majority colourings of digraphs!}. If we repeatedly flip colours in this way, we will always end up with an internal partition; the challenge is to make sure that the colour classes are not too imbalanced (for example, if we are able to find an internal partition in which the sizes of the colour classes differ by at most $(2\varepsilon/d) n$, then we can flip at most $\varepsilon d/n$ vertices to obtain an $\varepsilon$-internal bisection).

One might try to carefully design an algorithm that chooses which vertices to flip, in which order, in such a way that the two colour classes stay balanced. However, as far as we can tell this seems to be completely intractable in general. Instead, we make choices \emph{randomly} (and choose the initial red-blue colouring randomly as well). The idea is that if there is no particular bias towards red or blue then we should end up with a cut which is roughly half-red and half-blue.

It seems plausible that if one repeatedly chooses a uniformly random flip among all flips which would decrease the size of the cut, then whp the resulting internal partition is nearly a bisection. However, it is far from obvious how to prove this: one must track the process for a rather long time (and there is nontrivial dependence between the steps), and there is no obvious ensemble of statistics that drive the process with which one might hope to use the \emph{differential equations method} (which is a standard way to study the trajectory of combinatorial random processes; see \cite{Wor99DE}). Instead, we modify the process slightly, flipping large \emph{batches} of vertices at once. As long as the batches are not too large, one can show that the flips typically do not interfere with each other very much, and the size of the cut decreases quite dramatically with each batch of flips. So, our process runs for only a very small number of steps, and as a result the dependencies are mild enough to apply a standard concentration inequality to the numbers of red and blue vertices at each step. Similar ideas (in the setting of dense random graphs, with a much more complicated implementation) were used in \cite{FKNSS}.

We remark that our tuning of the batch size (not too small that we lose control over concentration, and not so large that the flips interfere with each other) may be compared with tuning of the \emph{learning rate} in gradient descent and similar optimisation algorithms. We also remark that the general idea of splitting a random process into batches also features in the celebrated \emph{R\"odl nibble}~\cite{Rod85} in probabilistic combinatorics, though the purpose of the batches is rather different.

\subsection{Organisation}
The proof of \cref{thm:majority-Gnp} spans \cref{sec:majority-lemmas,sec:processes,sec:marking,sec:acyclic-partition,sec:finish}. Specifically, some key lemmas are stated in \cref{sec:majority-lemmas}, then our random recolouring process is described in \cref{sec:processes}, then the list-assignment process is described and studied in \cref{sec:marking}, a list-colouring theorem is proved in \cref{lem:acyclic-partition}, and everything is put together in \cref{sec:finish}. \cref{thm:majority-2-colouring} is proved in \cref{sec:majority-2-colouring}, and \cref{thm:random,thm:ban-linial} are proved in \cref{sec:internal-external}. We also have two appendices with the details of various routine calculations.

\section{Key lemmas for majority 3-colouring}\label{sec:majority-lemmas}
In this section we outline the ingredients in the proof of \cref{thm:majority-Gnp}. We restrict our attention to the case $p=O(1/n)$ (if $p\ge C/n$ for a suitably large constant $C$, we can prove \cref{thm:majority-Gnp} with a much cruder version of the arguments outlined in this section, as we will see in \cref{sec:finish}).

First, the following lemma tells us that whp we can majority-colour \emph{almost} all the vertices of a random graph, and is proved by an iterative recolouring process.

\begin{lemma}\label{lem:local-output}
Fix any constant $C>0$ and let $D\sim\mb D(n,p)$ for $p\le C/n$. Then, whp $D$ has a 3-colouring in which all but $o(n)$ vertices are majority-coloured.
\end{lemma}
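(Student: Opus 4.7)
The plan is to analyse a random greedy recolouring process on $D$, starting from a uniformly random 3-colouring. At each round, every vertex that fails to be majority-coloured independently samples a new uniformly random colour. To sharpen the recurrence that will drive the analysis, I would augment each vertex with an auxiliary ``personality'' Markov chain whose state controls whether the vertex uses the full information about its out-neighbours' colours or intentionally forgets some of it when deciding to recolour. The output after $t$ rounds will be our candidate 3-colouring, and the goal is to show that only an $o(1)$-fraction of vertices are non-majority-coloured if $t = t(n) \to \infty$ slowly enough.

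For fixed $t$ as $n \to \infty$, I would analyse the asymptotic expected fraction $f_t$ of vertices that are not majority-coloured after $t$ rounds. The process is $t$-local: the colour of $v$ at time $s \le t$ is determined by the restriction of $D$ and the random choices to the depth-$s$ directed out-neighbourhood of $v$. A standard first-moment bound shows that in $D \sim \mb D(n,p)$ with $p \le C/n$, all but $o(n)$ vertices have no directed cycle of length at most $2t$ in their depth-$t$ out-neighbourhood, so that locally $D$ is distributed approximately as a Galton--Watson branching process with Poisson$(np)$ offspring. For each such ``tree-like'' vertex, the colour histories of its out-neighbours may be treated as independent, and this lets me write a recursive inequality $f_t \le \Phi(f_{t-1})$ coming from a single-vertex computation: given that an $f_{t-1}$-fraction of the out-neighbours flipped colour independently at the previous round, $\Phi(f_{t-1})$ bounds the probability that $v$'s current colour is overtaken.

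Next, I would verify that with the personality-changing modification in place, $\Phi$ has $0$ as its unique non-negative fixed point, so that iterating from $f_0 = 2/3$ drives the sequence to $0$. Given $\varepsilon > 0$, choose $t$ large enough that $f_t \le \varepsilon/2$. Since the number of non-majority-coloured vertices depends in a Lipschitz manner on the random graph and on the recolouring choices within depth-$t$ neighbourhoods, a bounded-differences argument (together with the first-moment estimate on non-tree-like vertices) concentrates this number around $f_t \cdot n$ up to $o(n)$. A standard diagonalisation then produces a sequence $t(n) \to \infty$ (growing slowly, e.g., $t(n) = O(\log \log n)$, so that $(np)^{t(n)} = n^{o(1)}$) for which the number of non-majority-coloured vertices is $o(n)$ whp.

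The hard part will be constructing the personality Markov chain so that $\Phi$ has no nonzero fixed point for every $p \le C/n$. Without the chain, the natural recurrence one obtains by union-bounding over the three possible ``overtaking'' colours has a positive stable fixed point once $np$ becomes small (roughly $np \le 20$), as noted in the outline. The delicate point is to correlate the ``forgotten'' information with $v$'s current colour in just the right way so that the single-vertex probability one computes is the probability that $v$'s \emph{own} colour is overtaken, rather than that \emph{some} colour becomes a majority; only the former probability admits a recurrence with trivial fixed point. A secondary technical point is that one must quantitatively control the tree-like approximation as $t$ grows with $n$, but this is routine once the short-cycle count is controlled.
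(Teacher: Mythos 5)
Your proposal takes essentially the same approach as the paper: a lazy random recolouring process augmented by a ``personality'' Markov chain, local comparison to a $\on{Poisson}(np)$ Galton--Watson tree, a one-step recurrence whose contraction is enabled by the personality-changing trick, and a choice of $t(n)\to\infty$ growing slowly (the paper takes $t=\log\log n$). Two small points: the paper replaces your bounded-differences concentration step by a simple Markov's inequality (an upper bound on the expected number of bad vertices suffices, since we only need a one-sided $o(n)$ bound), and your assertion that the personality chain makes the recurrence have $0$ as its unique fixed point understates the remaining work---the personality trick improves the na\"ive factor of $3$ to a factor of $2$, but one still needs a computer-assisted check that $2P_\lambda'(0)\le 0.9999$ for all $\lambda$, because $2Q_d'(0)>1$ for $d\in\{7,9,11\}$ and only the Poisson averaging over degrees yields the contraction.
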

We prove \cref{lem:local-output} in \cref{sec:processes}. In that section, we first discuss how to crudely study a na\"ive recolouring process via a recurrence, and then we show how to use a ``personality-changing'' Markov chain to strategically forget information, eliminating certain dependencies and facilitating a sharper analysis.

Actually, we remark that we include the statement of \cref{lem:local-output} purely for exposition. For the rest of the proof of \cref{thm:majority-Gnp}, we will not really need the statement of \cref{lem:local-output} \emph{per se}; rather, we will need the analysis of the random recolouring process in its proof.

Unfortunately, no matter how long we run our process we cannot rule out the possibility that some small number of vertices fail to be majority-coloured. However, we can benefit from the fact that small subsets of sparse random graphs have very simple structure, as follows.

\begin{lemma}\label{lem:2degenerate}
For any constants $\varepsilon,C>0$ there is $\delta>0$ such that the following holds. If $G\sim\mb G(n,p)$ for $p\le C/n$, then whp every vertex subset $S$ with $|S|\le \delta n$ spans at most $(1+\varepsilon)|S|$ edges.
\end{lemma}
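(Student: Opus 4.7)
The plan is to prove \cref{lem:2degenerate} by a straightforward first-moment union bound over all potential ``bad'' subsets. For each $s$ with $1\le s\le \delta n$, I will bound the probability that some fixed subset $S$ of size $s$ spans at least $k:=\lceil(1+\varepsilon)s\rceil$ edges, then take a union bound over all $\binom ns$ choices of $S$.

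For a fixed $S$ of size $s$, the number of edges spanned by $S$ is $\mathrm{Bin}(\binom s2,p)$, and the standard tail estimate $\Pr[\mathrm{Bin}(N,p)\ge k]\le \binom{N}{k}p^k$ gives
\[
\Pr\bigl[e(S)\ge (1+\varepsilon)s\bigr]\le \binom{\binom{s}{2}}{k}p^{k}.
\]
Combining with $\binom{n}{s}\le (en/s)^s$ and $\binom{\binom s2}{k}\le (es^2/(2k))^k\le (es/(2(1+\varepsilon)))^k$, and plugging in $p\le C/n$, the probability that \emph{some} $S$ of size $s$ is bad is at most
\[
\left(\frac{en}{s}\right)^{s}\left(\frac{esC}{2(1+\varepsilon)n}\right)^{(1+\varepsilon)s}
= \left[A\cdot \frac{s}{n}\right]^{\varepsilon s},
\]
where $A=A(\varepsilon,C)$ is an explicit constant obtained by gathering the $s$-independent factors (using $(1+\varepsilon)s-s=\varepsilon s$ to cancel the $n^s$ and $s^{-s}$ with the corresponding terms).

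To finish, I will split the sum $\sum_{s=1}^{\delta n}[A s/n]^{\varepsilon s}$ into two ranges. For $1\le s\le \log n$, every term is bounded by $[A\log n/n]^{\varepsilon}\to 0$, and there are only $\log n$ terms, so the contribution is $o(1)$. For $\log n\le s\le \delta n$, choosing $\delta$ small enough that $A\delta<1/e$ gives $[As/n]^{\varepsilon s}\le (A\delta)^{\varepsilon \log n}=n^{-\varepsilon \log(1/(A\delta))}$, and if additionally $\varepsilon \log(1/(A\delta))>2$ the sum is at most $\delta n\cdot n^{-2}=o(1)$. Both conditions are satisfied by taking $\delta$ sufficiently small in terms of $\varepsilon$ and $C$.

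The main obstacle is negligible: this is a routine sparse random graph calculation and the only care needed is in balancing the two ranges of $s$ and verifying that the constant $A$ does not depend on $n$. In particular, the argument only uses $p\le C/n$, not $p$ of exact order $1/n$, matching the hypothesis of the lemma.
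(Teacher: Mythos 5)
Your proposal is correct and is essentially the same routine first-moment union bound that the paper invokes (by citing \cite[Proof of Theorem~1]{JT08}) and that it also carries out explicitly for the closely related \cref{lem:1.5density}. The only cosmetic difference is that the paper bounds $\binom{s^2}{1.1s}$ rather than $\binom{\binom{s}{2}}{k}$ and arranges the constants slightly differently, but both arguments hinge on the same cancellation $n^{s}\cdot n^{-(1+\varepsilon)s}=n^{-\varepsilon s}$ and a choice of $\delta$ small enough that $(s/n)^{\varepsilon s}$ beats the entropy and constant factors; your two-range split of the sum over $s$ is a standard way to close the estimate.
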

\cref{lem:2degenerate} follows from a routine calculation (which appears for example in \cite[Proof of Theorem~1]{JT08}).

Recall that a graph is \emph{$k$-degenerate} if every subgraph has a vertex with degree at most $k$. Such graphs have chromatic number at most $k+1$. Note that \cref{lem:2degenerate} (applied with any $\varepsilon<1/2$) implies that every subgraph of $\mb G(n,p)$ with at most $\delta n$ vertices is $2$-degenerate, therefore has chromatic number at most $3$. Instead of chromatic number, we will need to use \cref{lem:2degenerate} to establish a somewhat more delicate partitioning property, as in the following lemma (adapted from work of Anastos, Lamaison, Steiner and Szab\'o~\cite{ALSS21}, and proved in \cref{sec:acyclic-partition}).

\begin{lemma}\label{lem:acyclic-partition} Let $D$ be a digraph with a distinguished vertex subset $U$, and assign to each $v\in U$ a list $L(v)\subseteq\{1,2,3\}$ of size 2. Suppose that for each of the three possible lists, there is no directed cycle among the vertices in $U$ with that list. Then for any 3-colouring $c:V(D)\setminus U\to \{1,2,3\}$ (of the vertices without lists), we can complete the colouring by assigning a colour $c(v)\in L(v)$ to each $v\in U$, in such a way that every $v\in U$ is majority-coloured.
\end{lemma}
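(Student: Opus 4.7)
My plan is a three-phase sequential assignment that processes the vertices with list $\{1,2\}$, then list $\{1,3\}$, then list $\{2,3\}$ in succession, exploiting the acyclicity of each $D[U_{ij}]$ (here I write $U_{ij} := \{v \in U : L(v) = \{i,j\}\}$). For a vertex $v$, write $n_k(v)$ for the number of out-neighbours of $v$ whose final colour is $k$; the goal is, for every $v \in U$, to pick $c(v) \in L(v)$ satisfying $n_{c(v)}(v) \le d^+(v)/2$.

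In Phase~1 I would colour $U_{12}$ in a reverse topological order of the DAG $D[U_{12}]$. When $v \in U_{12}$ is reached, every out-neighbour of $v$ in $V(D)\setminus U$ and every out-neighbour of $v$ in $U_{12}$ has already been coloured; write $a_k$ for the number of such coloured out-neighbours of colour $k$, and $b_{13}, b_{23}$ for the number of out-neighbours of $v$ in $U_{13}, U_{23}$ respectively. Because $U_{13}$-vertices will eventually receive colours in $\{1,3\}$ and $U_{23}$-vertices in $\{2,3\}$, we get the worst-case bounds $n_1(v) \le a_1 + b_{13}$ and $n_2(v) \le a_2 + b_{23}$, valid no matter how the remaining phases proceed. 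Their sum is $d^+(v) - a_3 \le d^+(v)$, so at least one of them is $\le d^+(v)/2$; set $c(v)$ to a colour in $\{1,2\}$ achieving this. Phase~2 is analogous: processing $U_{13}$ in a reverse topological order of $D[U_{13}]$, the only uncoloured out-neighbours of $v$ lie in $U_{23}$ and never receive colour $1$, so $n_1(v) = a_1$ exactly while $n_3(v) \le a_3 + b_{23}$; again their sum is at most $d^+(v)$, so one can choose a valid $c(v)\in\{1,3\}$. In Phase~3 every out-neighbour of $v\in U_{23}$ is already coloured, and we simply pick the minority of $\{n_2(v), n_3(v)\}$, which works since $n_2(v)+n_3(v) \le d^+(v)$.

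The one thing I expect to require explicit checking, though it is essentially self-evident, is that the worst-case bound chosen in Phase~1 survives into Phases~2 and~3: once $v \in U_{12}$ has been coloured, only $U_{13}$-out-neighbours can still contribute to $n_1(v)$ and only $U_{23}$-out-neighbours to $n_2(v)$, and in both cases the number of such out-neighbours is exactly the quantity ($b_{13}$ or $b_{23}$) already budgeted for; the analogous check for Phase~2 is identical. The ``main obstacle'' is really to spot this clean three-phase structure. Without the list-by-list acyclicity hypothesis, one would not be able to order each $U_{ij}$ so that its internal out-neighbours are coloured first, and the worst-case averaging would break down. Acyclicity is therefore used in a minimal way: just to produce the topological orderings inside each list-class.
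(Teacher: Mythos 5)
Your proof is correct and uses essentially the same idea as the paper: process each list-class $U_{L}$ in a topological order permitted by the acyclicity hypothesis, and budget pessimistically for out-neighbours in other list-classes using the observation that a $U_{L'}$-vertex can only ever receive the unique colour in $L\cap L'$. The paper phrases this as colouring each $U_L$ independently against ``imagined'' worst-case colours for all other classes, while you sequence the three classes and reuse real colours from earlier phases, but the bookkeeping is the same.
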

We emphasise that in \cref{lem:acyclic-partition} we make no guarantees about the majority-colouredness of the vertices not in $U$, although when we apply this lemma some appropriate conditions will indeed be satisfied.

In order to apply \cref{lem:acyclic-partition} (with \cref{lem:2degenerate}), we need a lot more information about the colouring produced by \cref{lem:local-output}. To this end we define a ``list-assignment process'' which builds on the colouring from \cref{lem:local-output}, assigning lists to a small subset of vertices based on all the knock-on effects that would result from changing the colours of the initially non-majority-coloured vertices. (Crucially, we show that these knock-on effects can be compared to a \emph{subcritical} branching process). The outcome of our list-assignment process is as follows.

\begin{lemma}\label{lem:marking}
Fix any constant $C\ge 0.1$ and let $D\sim\mb D(n,p)$ for any $0.1/n\le p\le C/n$. Whp we can find a subset $U\subseteq V(D)$, an assignment of a colour $c(v)\in \{1,2,3\}$ to each $v\notin U$, and an assignment of a list $L(v)$ to each $v\in U$, such that the following hold.
\begin{enumerate}[{\bfseries{L\arabic{enumi}}}]
    \item\label{L1} For any completion of our partial colouring $c$, obtained by assigning a colour $c(v)\in L(v)$ to each $v\in U$, we have that every $v\notin U$ is majority-coloured with respect to $c$ (i.e., the initial partial colouring is a ``robust'' majority colouring, in the sense that the vertices outside $U$ remain majority-coloured no matter what we do inside $U$).
    \item\label{L2} $|U|=o(n)$.
    \item\label{L3} Each $v\in U$ has list size $|L(v)|=2$ or $|L(v)|=3$.
    \item\label{L4} Every directed cycle in $D[U]$ contains at least two vertices with list size 3.
    \item\label{L5} There is $\ell=O(1)$ such that every length-$\ell$ directed path in $D[U]$ contains a vertex with list size~3.
\end{enumerate}
\end{lemma}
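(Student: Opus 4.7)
The plan is to define a list-assignment process on top of the colouring $c_0$ produced by the proof of \cref{lem:local-output}. Initialise $U = U_0 = \{v \in V(D) : v \text{ is not majority-coloured under } c_0\}$ with list $L(v) = \{1,2,3\}$ for each $v \in U_0$, and keep the colour $c(v)$ fixed for each $v \notin U_0$. Call a vertex $v \notin U$ \emph{fragile} if some completion of the current lists would make $v$ not majority-coloured; writing $c(v) = i$ and $d = d^+(v)$, this is equivalent to saying that the number of $v$'s out-neighbours $u$ for which either ($u \notin U$ and $c(u) = i$) or ($u \in U$ and $i \in L(u)$) exceeds $\lfloor d/2 \rfloor$. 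The process repeatedly picks a fragile vertex $v$ and adds it to $U$: if $v$ lies at ``cascade depth'' less than a constant threshold $d^*$ we assign $L(v) = \{c(v), j(v)\}$ for a suitably chosen second colour, while at depth $\ge d^*$ we assign $L(v) = \{1,2,3\}$. Properties L1 and L3 follow immediately, and L5 holds with $\ell$ slightly above $d^*$ (using in addition the locally tree-like structure of $D[U]$ to handle directed paths that are not monotone in depth).

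The main challenge is L2. The key observation is that $v \notin U$ can only become fragile after some out-neighbour of $v$ is added to $U$, and then only if $v$ is \emph{marginally majority-coloured} in $c_0$ (the count of its out-neighbours with colour $c(v)$ is within $O(1)$ of $\lfloor d^+(v)/2\rfloor$). In $\mb D(n,p)$ with $0.1 \le np \le C$, each vertex has Poisson in-degree of constant mean, and only a constant fraction bounded away from $1$ of its in-neighbours are marginally majority-coloured. With some care one can arrange that the expected number of fragile ``children'' produced per vertex of $U$ is strictly less than~$1$, making the cascade subcritical.

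The main obstacle, however, is that fragility events are \emph{not} independent across different vertices of $U$ (a single in-neighbour may require several of its out-neighbours in $U$ before it becomes fragile), so the cascade cannot directly be viewed as a branching process. We bypass this by introducing, for each $v \in V(D)$, a \emph{virtual process} $\Pi(v)$ that simulates what the list-assignment process would do if it were started from just the seed $\{v\}$, ignoring all other $U_0$-vertices and propagation effects outside a local neighbourhood of $v$. In the locally tree-like $\mb D(n,p)$, each $\Pi(v)$ couples to a subcritical Galton--Watson tree, giving $\mathbb{E}[|\Pi(v)|] = O(1)$ and, by standard exponential tail bounds plus a union bound over $v$, $|\Pi(v)| \le (\log n)^{O(1)}$ uniformly whp. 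By construction, $U \subseteq U_0 \cup \bigcup_{v \in U_0} \Pi(v)$; and by running the recolouring of \cref{lem:local-output} for sufficiently many (but $O(1)$) steps we can arrange $|U_0| = o(n / (\log n)^{O(1)})$, so $|U| = o(n)$, proving L2.

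Finally, L4 follows by combining $|U| = o(n)$ with \cref{lem:2degenerate}: for $\varepsilon > 0$ small, $D[U]$ spans at most $(1+\varepsilon)|U|$ (undirected) edges, so it is almost a forest. Each vertex in $U \setminus U_0$ has a well-defined ``parent'' (the out-neighbour in $U$ whose addition triggered its fragility), and the parent relation is acyclic, so a directed cycle in $D[U \setminus U_0]$ would require strictly more than $|U \setminus U_0|$ edges inside $U \setminus U_0$, contradicting \cref{lem:2degenerate}. A similar small-subgraph count (tightening $\varepsilon$) rules out directed cycles containing exactly one list-size-$3$ vertex, giving L4.
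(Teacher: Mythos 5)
The high-level strategy you sketch — seed set of colour-defective vertices, cascading list-assignments, virtual processes to restore independence, a $2$-degeneracy argument for L4, reusing \cref{lem:acyclic-partition} — matches the paper's, but there is a genuine gap at the heart of the subcriticality argument, and a secondary one for L4.

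The key claim $U \subseteq U_0 \cup \bigcup_{v\in U_0}\Pi(v)$ is unjustified and, as stated, false. You have correctly identified the obstacle (``a single in-neighbour may require several of its out-neighbours in $U$ before it becomes fragile''), but the virtual processes $\Pi(v)$ do not circumvent it. If a vertex $u$ becomes fragile only after two out-neighbours $w_1,w_2$ enter $U$, with $w_1$ reached by the cascade from $v_1\in U_0$ and $w_2$ by the cascade from $v_2\in U_0$, then $u$ is fragile in the real process but is \emph{not} fragile in $\Pi(v_1)$ (which never sees $w_2$) nor in $\Pi(v_2)$ (which never sees $w_1$). So $u\in U$ but $u\notin \bigcup_{v\in U_0}\Pi(v)$, breaking the containment. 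The paper deals with exactly this by introducing ``duplicate-defective'' vertices (any vertex with two out-neighbours in $U$ is immediately given the full list $\{1,2,3\}$ and treated as a fresh seed), and then running a two-sided bootstrap: \cref{lem:defective-bounding}(B) shows the number of duplicates is $O(\delta^2 n\operatorname{polylog} n)$ while $|U|\le\delta n$, and \cref{lem:defective-bounding}(A) shows the virtual process seeded from \emph{all} defective vertices (colour-, cycle-, path- and duplicate-defective) has size at most $2|W|(\log n)^{\ell+3}$. The two bounds feed into each other to force $|U|<\delta n$. Your proposal has neither the duplicate-defective rule nor the self-bounding loop, so the reduction to a branching process does not go through.

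There is a related, more technical gap that is not just bookkeeping: your coupling of $\Pi(v)$ to a subcritical Galton--Watson tree is asserted by appeal to local tree-likeness, but the relevant randomness is the \emph{conditional} distribution of $D$ after revealing the entire colour history produced by the recolouring process (the seed set $U_0$ itself depends on this history). The paper handles this via a marked configuration model and a contiguity-type lemma (\cref{lem:probably-simple}) together with the precise criticality bound in \cref{T-critical}, which in turn rests on the personality-changing analysis and $2P_\lambda'(0)\le 0.9999$ from \cref{lem:P-slope}. The subcriticality of your per-parent fragile-child count is exactly this delicate inequality; without it, there is no reason the cascade is subcritical (indeed for the na\"ive process it is not for all $\lambda$).

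Finally, L4 is not properly handled. The paper enforces L4 constructively via action (2b) of \cref{def:marking}: any directed cycle in $U$ with at most one defective vertex has all its vertices made defective. Your process has no such mechanism, and the claim that ``a directed cycle in $D[U\setminus U_0]$ would require strictly more than $|U\setminus U_0|$ edges inside $U\setminus U_0$'' does not follow — a cycle on $k$ vertices contributes only $k$ edges, the parent edges may leave the cycle, and cycle edges and parent edges can coincide, so there is no contradiction with \cref{lem:2degenerate}. You would need to either add a cycle-defective rule (as the paper does) or supply a genuinely different argument.
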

\begin{remark}
    Note that in \cref{lem:marking} we assume the lower bound $p\ge 0.1/n$. This is not very crucial (it just makes technical considerations slightly more convenient in a minor part of the proof). Due to this assumption, in our proof of \cref{thm:majority-Gnp} we will treat the regime $p<0.1/n$ separately.
\end{remark}

We prove \cref{lem:marking} in \cref{sec:marking}. In \cref{sec:finish} we then show how to combine \cref{lem:marking,lem:acyclic-partition,lem:2degenerate} to prove \cref{thm:majority-Gnp}.

\section{Majority-colouring processes}\label{sec:processes}

In this section we prove \cref{lem:local-output}. First we
describe a simple random process that attempts to majority-colour
a graph (but which is intractable to analyse exactly), and then we
describe a modification which permits somewhat sharper analysis. In this section we sometimes refer to the three colours 1,2,3 as ``red'', ``green'' and ``blue''.

\subsection{A simple process}

The most obvious candidate to prove \cref{lem:local-output}
is the process that first randomly chooses an initial colour for each vertex, and then repeatedly changes the colour of every vertex that is not majority-coloured (making relevant choices randomly). That is to say, at each time step, we identify the set of all vertices which are not majority coloured, and we independently change each of them to some random other colour (simultaneously).

We were not able to use this simple process to prove \cref{lem:local-output}, but it is nonetheless instructive to see what bounds we can prove with it (as a warm-up for the next subsection, where we introduce a more sophisticated process). The first key observation is that this is a ``local'' process: in order to know the colour of a vertex $v$ after $t$ random recolouring steps, we only need to know about the colours of vertices which can be reached from $v$ by directed paths of length at most $t$. Crucially, $\mb D(n,p)$ locally converges to a $\on{Poisson}(np)$ Galton--Watson tree, as follows. We write $\on d_{\mathrm{TV}}(X,Y)$ for the total variation distance\footnote{The \emph{total variation distance} between two (discrete) probability distributions $\mu,\nu$, taking values in a space $\Omega$, is the supremum of $|\mu(A)-\nu(A)|$ over all $A\subseteq \Omega$.} between two random objects $X$ and $Y$.
\begin{lemma}\label{lem:GW}
Fix a constant $C>0$ and let $p\le C/n$. Let $T$ be a $\on{Poisson}(np)$ Galton--Watson
tree (with root $r$, say), and orient all the edges of $T$ away from the root $r$. Let $D\sim \mb D(n,p)$. For a random vertex $v\in V(D)$, let $D_t(v)$ (respectively $T_t$) be the subgraph of $D$ (respectively, of $T$) induced by those vertices reachable by directed paths of length at most $t$ from $v$ (respectively, from $r$). Then for constant $t$ we have $\on d_{\mathrm{TV}}(D_t(v),T_t)\le O((\log n)^{2t}/n)$.
\end{lemma}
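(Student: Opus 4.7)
The plan is a direct coupling between a breadth-first exploration of $D_t(v)$ in $\mb D(n,p)$ and the depth-$t$ truncation of the Galton--Watson tree $T$. Two sources of discrepancy arise: each out-degree in $\mb D(n,p)$ is $\on{Bin}(n-1,p)$ rather than $\on{Poisson}(np)$, and an out-neighbour in $D$ may coincide with a previously-explored vertex, whereas offspring in $T$ are always fresh. Both errors can be bounded in terms of the total number of vertices processed, so the main task is to show that this number is at most $(\log n)^{O(t)}$ with high probability.

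To control $|T_t|$, I would write $Y_k$ for the number of vertices at depth $k$ in $T$, so that $Y_k\mid Y_{k-1}\sim \on{Poisson}(\lambda Y_{k-1})$ with $\lambda=np\le C$, and prove by induction on $k$ that $Y_k\le (\log n)^{k}$ for every $1\le k\le t$, with total failure probability $O(1/n)$. Conditional on $Y_{k-1}\le (\log n)^{k-1}$, the standard Poisson tail bound $\Pr[\on{Poisson}(\mu)\ge a]\le (e\mu/a)^{a}e^{-\mu}$ applied with $\mu=\lambda(\log n)^{k-1}$ and $a=(\log n)^{k}$ gives failure probability $n^{-\omega(1)}$, since the ratio $a/\mu=(\log n)/\lambda$ tends to infinity. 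A union bound over $k\le t$ then yields $|T_t|\le M$ for some $M=O((\log n)^{t})$ except with probability $O(1/n)$. This is the one step where some care is required: the thresholds must grow like $(\log n)^{k}$ rather than something slower such as $C^{k}\log n$, so that Poisson concentration is effective at every step of the induction.

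The coupling itself processes vertices in BFS order. At each processed vertex $u$ I would first couple the out-degree $\on{Bin}(n-1,p)$ to the offspring count $\on{Poisson}(np)$: combining Le Cam's inequality with the trivial estimate $\on d_{\mathrm{TV}}(\on{Poisson}((n-1)p),\on{Poisson}(np))\le p$ bounds the failure probability by $O(np^{2}+p)=O(1/n)$ per vertex. Conditional on the counts matching (say both equal to $k$), the out-neighbour set of $u$ in $D$ is a uniform $k$-subset of $V(D)\setminus\{u\}$, so a union bound gives the probability that any of these $k$ vertices has already been revealed as at most $k\cdot s/(n-1)$, where $s$ is the number of currently revealed vertices. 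Conditional on $|T_t|\le M$, both the number of processed vertices and the total number of offspring across the whole exploration are at most $M$, so summing the failure probabilities gives a coupling error of $O(M/n)+O(M^{2}/n)=O((\log n)^{2t}/n)$; adding the $O(1/n)$ probability that $|T_t|>M$ yields the advertised bound on $\on d_{\mathrm{TV}}(D_t(v),T_t)$. The uniformly random choice of $v$ plays no role beyond fixing a root, by vertex-transitivity of the distribution of $\mb D(n,p)$.
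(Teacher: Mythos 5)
Your proof is correct and takes essentially the same approach as the paper's (intentionally crude) sketch: both compare a breadth-first exploration of $D_t(v)$ against a level-by-level construction of $T_t$, using a degree bound of order $\log n$, the $O(np^2)=O(1/n)$ total-variation estimate between $\operatorname{Bin}(n-1,p)$ and $\operatorname{Poisson}(np)$ out-degrees, and the observation that collisions among an $O((\log n)^t)$-vertex exploration occur with probability $O((\log n)^{2t}/n)$. Your explicit step-by-step coupling, with the inductive tail bound $Y_k\le(\log n)^k$ at each level, is a somewhat cleaner and more careful packaging of the same estimates than the paper's brief pointwise comparison of $\Pr[D_t(v)=R]$ against $\Pr[T_t=R]$.
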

\begin{proof}[Proof sketch]
We will be crude and brief with details, as very similar observations have been made many times in the literature (see for example \cite[Theorem~6]{Cur17}). By a Chernoff bound, with probability $1-o(1/n)$ each vertex in $D$ and $T_t$ has degree at most $\log n$. In particular, considering breadth-first search in $D$ starting from $v$, the probability that we reach any vertex via two different paths of length at most $t$ (i.e., the probability that $D_t(v)$ is not one of the possible outcomes of $T_t$) is at most $p(\log n)^{2t}=O((\log n)^{2t}/n)$.

Now, the out-degree of each vertex in $D$ has a $\on{Binomial}(n-1,p)$ distribution, while the out-degree of each vertex in $T$ has a $\on{Poisson}(np)$ distribution. By standard estimates (see for example \cite[Eq.~(1.1)]{BHJ92}) we have $\on d_{\mathrm{TV}}(\on{Binomial}(n-1,p),\on{Poisson}(np))=O(np^2)=O(1/n)$. So, among possible outcomes $R$ of $T_t$ which have maximum degree at most $\log n$, we have $\Pr[D_t(v)=R]\le \Pr[T_t=R]+O((\log n)^{t}/n)$ (noting that each such $R$ has at most $(\log n)^t$ vertices). There are $O((\log n)^t)$ different outcomes of $R$ to consider, so the desired result follows.
\end{proof}

Morally speaking, \cref{lem:GW} says that to understand the behaviour of our random recolouring process for $t=o(\log n/\log \log n)$ steps, it suffices to consider an analogous process on a $\on{Poisson}(np)$ Galton--Watson
tree.

Now, say ``time $t$'' is the moment in time just before the $t$-th recolouring step (so at time $1$, the colouring is uniformly random).
Say that a colour (say, red) \emph{overtakes} for a vertex $v$ at
time $t$ if the following holds: at time $t-1$, there are at most
$\deg^{+}(v)/2$ red vertices in the out-neighbourhood of $v$, but
at time $t$ there are more than $\deg^{+}(v)/2$ of them. Then,
a vertex $v$ fails to be majority-coloured at time $t$ if and only
if the following holds: there is some colour $\gamma$ such that $v$ has
colour $\gamma$ at time $t$, and $\gamma$ overtakes for $v$ at time $t$.

Given $\lambda\ge 0$ and $t\in \mathbb N$, let $f_{t}$ be the probability that if we perform our random recolouring process
on a $\on{Poisson}(\lambda)$ Galton--Watson tree $T$, the root $r$ is not majority-coloured
at time $t$ (meaning that it will be randomly recoloured at the $t$-th recolouring step). 
This probability is subject to the randomness of $T$, and also the randomness of the recolouring process.  We have 
\[ f_1=\Pr[r\text{ is not majority-coloured at time $t=1$}]\] and for $t>1$
\begin{align*}
f_{t} & =\sum_{\gamma}\Pr[r\text{ has colour }\gamma\text{ at time }t\text{, and }\gamma\text{ overtakes for }r\text{ at time }t]\\
 & \le3\,\Pr[\text{red overtakes for }r\text{ at time }t]\\
 & =3P_\lambda(f_{t-1}),
\end{align*}
where 
\[
P_\lambda(f)=\sum_{d=0}^{\infty}\frac{e^{-\lambda}\lambda^{d}}{d!}Q_d(f)
\]
and $Q_d(f)$ is defined to be
\begin{equation}\sum_{i=1}^{\floor{d/2}}\binom{d}{i}\left(\frac{1}{3}\right)^{i}\left(\frac{2}{3}\right)^{d-i}\sum_{j=\floor{d/2}+1}^{d}\sum_{k=0}^{i}\binom{i}{k}f^{k}(1-f)^{i-k}\binom{d-i}{j+k-i}\left(\frac{f}{2}\right)^{j-i+k}\left(1-\frac{f}{2}\right)^{d-j-k}.\label{eq:Qd}\end{equation}
To explain the formula for $Q_d(f)$, consider a single vertex $v$ with $d$ out-neighbours $v_1,\dots,v_d$. Suppose that the colours of $v_1,\dots,v_d$ are initially independently randomly chosen from $\{\text{red, green, blue}\}$, and then every vertex independently decides to change its colour (to a different one, chosen randomly) with probability $f$. Then, the probability that red overtakes as the majority colour among $v_1,\dots,v_d$ is precisely $Q_d(f)$. Indeed, in the formula for $Q_d(f)$ we represent by $i$ the possible
numbers of red vertices before the overtaking event, and
by $j$ the possible numbers of red vertices after the overtaking
event. We represent by $k$ the possible numbers of red
vertices being randomly recoloured during the overtaking
event (so $j-i+k$ non-red vertices must be recoloured to red).

To explain the rest of the above formulas: note that if we condition on the out-degree of $r$ being $d$, and we consider the $d$ disjoint subtrees $T_1,\dots,T_d$ rooted at the $d$ out-neighbours $v_1,\dots,v_d$ of $r$, then $T_1,\dots,T_d$ (together with their vertex-colourings at time $t-1$) are independent and have the same distribution as $T$. For each $i$, the colour of $v_i$ changes at time $t$ if and only if $v_i$ is not majority-coloured at time $t-1$, which happens with probability $f_{t-1}$ (independently for each $i$). So, the conditional probability that red overtakes for $r$ at time $t$ is precisely $Q_d(f_{t-1})$. The out-degree of $r$ is $\on{Poisson}(\lambda)$-distributed, so $P_\lambda(f_{t-1})$ is the unconditional probability that red overtakes for $r$ at time $t$.

Now, one can check that $P_\lambda$ is monotone increasing, so
$3P_\lambda(3P_\lambda(\dots 3P_\lambda(f_{1})\dots))$ is an upper bound for $f_{t}$.
Unfortunately, an explicit computation shows that (say) $P_{10}'(0)>1$, so when $\lambda=10$, this recurrence will not tend to zero as $t\to\infty$ (it seems that this unfortunate situation happens when $\lambda$ is a real number in the approximate range $3\le \lambda\le 20$).

\subsection{Personality-changing to improve the recurrence}\label{subsec:personality-changing}
Note that the above analysis features a lossy union bound (roughly speaking, this costs us a factor of 3 in the recurrence). In order to improve the above analysis, we would ideally like to prove
a nontrivial upper bound on
\begin{equation}
\Pr[r\text{ is red at time }t\;|\;\text{red overtakes for }r\text{ at time }t].\label{eq:conditional-probability}
\end{equation}
Note that it is not too hard to describe the evolution of the colour of our root vertex $r$, given the history of colours of overtaking-events of its out-neighbours. Indeed, every time $\gamma$ overtakes, we ask if $r$ has colour $\gamma$, and if so we change it to a random other colour. We might hope to obtain a nontrivial bound on the above probability by conditioning on an arbitrary possible history
of overtaking-colours (determined by the colours of the out-neighbours of $r$), and proving a uniform upper bound on the probability that
$r$ has colour $\gamma$ at time $t$ given this particular history. Unfortunately,
for some very pathological histories we cannot get a nontrivial bound
this way (e.g., given the history $\dots,1,2,1,2,1,2$ of overtaking-colours,
we can be almost certain that $r$ has colour $3$ at time $t$), and it
seems to be difficult to say anything nontrivial about the distribution
of the overtaking-colour-history.

Instead, we consider a variation of our recolouring process (which is still ``local'', but is no longer ``Markovian'': transition probabilities will now depend on the entire history of the colours of the out-neighbours).

At each point in time, every vertex now has a ``personality'' as well as a colour: it can be \emph{paranoid}
or \emph{thoughtful}. The idea is that paranoid vertices randomly change
their colour at each overtaking event, regardless of their own colour (i.e., they change even if they don't have to).
On the other hand, thoughtful vertices do take their own colour into account, and may not switch if they don't have to. However, if a vertex is thoughtful and decides not to randomly change its colour at an overtaking event, then we change its personality to paranoid.
(So, if a vertex $v$ is thoughtful, then we can guarantee that the colour of $v$ was determined
by a random resampling from the last overtaking event. This means its colour is uniformly random among the two colours different from the colour that overtook at the last overtaking event).

Specifically: initially, set all vertices to be paranoid with
probability $1/3$, and thoughtful with probability $2/3$ independently of each other. Then, when a colour $\gamma$ overtakes for a vertex
$v$ we proceed as follows.

\begin{itemize}
\item If $v$ is thoughtful, we first define a real number $p^*\in \{0,1/3,1/2\}$ (which can be interpreted as ``the probability that $v$ has colour $\gamma$, given all relevant information except the current colour of $v$'').
\begin{itemize}
\item If $v$ has never changed its colour before, then $p^*=1/3$.
\item If, before the last time $v$ changed colour, it had colour $\gamma$, then $p^*=0$.
\item Otherwise, $p^*=1/2$.
\end{itemize}
Now, flip a biased coin that comes up heads with probability $(1/2-p^*)/(1-p^*)$ (so, informally, ``given all relevant information except the current colour of $v$'', the probability that $v$ has colour $\gamma$ or that the coin came up heads is exactly $1/2$).
\begin{itemize}
\item If $v$ has colour $\gamma$ or the coin came up heads, then randomly recolour $v$ to a colour other than $\gamma$, but do not change its personality.
\item Otherwise, do not change the colour of $v$ but change the personality of
$v$ to paranoid.
\end{itemize}
\item If $v$ is paranoid, randomly recolour $v$ to a colour other than $\gamma$
(regardless of what colour $v$ has). Change the personality of $v$ to
thoughtful.
\end{itemize}

Note that for any vertex $v$, the personality of $v$ evolves according to a particular Markov chain indexed by the points in time when overtaking events happen. Indeed, paranoid vertices always transition to thoughtfulness, and thoughtful vertices stay thoughtful or transition to paranoid with probability 1/2. The initial personality distribution (in which we are paranoid with probability $1/3$) is precisely the stationary distribution of this Markov chain. Moreover, note that for any vertex $v$ and any time $t$, the personality of $v$ is independent from the entire history of the colours of its out-neighbours (and in particular, independent from the event that $\gamma$ overtakes for $v$ at time $t$, for any colour $\gamma$).

Now, recalling that $r$ is the root of our Galton--Watson tree, define the events
\begin{align*}
    \mathcal{P}_t & := \{r \text{ is paranoid at time }t\}\\
    \mathcal{T}_t & := \{r \text{ is thoughtful at time }t\}\\
    \mathcal{O}_{t}(\gamma) & := \{\gamma \text{ overtakes for }r \text{ at time }t\}\\
    \mathcal{C}_t & := \{r \text{ changes its colour just after time }t\},
\end{align*}
and let $f_t=\Pr[\mathcal C_t]$ be the probability that $r$ changes its colour just after time $t$ (i.e., during the $t$-th recolouring step). Then
\begin{align}
f_{t} & =\sum_{\gamma}\Pr[\mc O_t(\gamma)]\cdot\left(\vphantom\sum\Pr[\mc P_t]\cdot \Pr[\mc C_t\,|\,\mc P_t\cap \mc O_t(\gamma)]+\Pr[\mc T_t]\cdot\Pr[\mc C_t\,|\,\mc T_t\cap \mc O_t(\gamma)]\right)\notag\\
 & =\sum_{\gamma}\Pr[\mc O_t(\gamma)]\cdot\left(\frac13\cdot 1+\frac23\cdot \frac12\right)=\frac23 \sum_{\gamma}\Pr[\mc O_t(\gamma)]=2P_\lambda(f_{t-1}).\label{eq:ft-personality-changing-recurrence}
\end{align}

Crucially, this is the same recurrence as we na\"ively obtained in the last subsection, but with the factor of $3$ replaced with a factor of $2$. It essentially remains to study the function $P_\lambda$ and the initial change probability $f_1$; the following lemmas encapsulate the properties we will need.

\begin{lemma}\label{lem:f1}
For any $\lambda\ge 0$ we have $f_1\le 1/3$ .
\end{lemma}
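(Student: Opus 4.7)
The plan is to condition on the out-degree $d$ of the root $r$ of the Galton--Watson tree $T$, show that for every fixed $d$ the conditional probability that $r$ fails to be majority-coloured at time $1$ is at most $1/3$, and then un-condition.

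Recall that at time $1$ every vertex of $T$ is independently assigned a uniformly random colour in $\{1,2,3\}$; in particular the colour $c(r)$ of the root is independent of the colours of its out-neighbours. For each $\gamma\in\{1,2,3\}$ I would introduce the event $A_\gamma$ that strictly more than $d/2$ of the out-neighbours of $r$ have colour $\gamma$. The key observation is that $A_1,A_2,A_3$ are pairwise disjoint, because two different colours cannot each appear on more than $d/2$ of $d$ vertices (their combined counts would then exceed $d$). Combined with the symmetry $\Pr[A_1\mid d]=\Pr[A_2\mid d]=\Pr[A_3\mid d]$, this forces $\Pr[A_\gamma\mid d]\le 1/3$ for every $\gamma$. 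Since $r$ fails to be majority-coloured at time $1$ precisely when $A_{c(r)}$ occurs, and $c(r)$ is independent of $(A_1,A_2,A_3)$, further conditioning on $c(r)$ gives
\[
\Pr[r\text{ not majority-coloured}\mid d]\;=\;\sum_{\gamma=1}^{3}\tfrac{1}{3}\,\Pr[A_\gamma\mid d]\;=\;\Pr[A_1\mid d]\;\le\;\tfrac{1}{3},
\]
and averaging over the $\on{Poisson}(\lambda)$ distribution of $d$ (together with the rest of the randomness of the tree) yields $f_1\le 1/3$.

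There is really no substantial obstacle: the argument rests entirely on the elementary ``at most one colour can cover strictly more than half of a set'' observation plus the independence of $c(r)$ from the colours of its out-neighbours at time $1$. Equivalently, one could write $f_1=\mathbb{E}_{d}\bigl[\Pr[\on{Binomial}(d,1/3)>d/2]\bigr]$ and verify case by case that each summand is at most $1/3$ (with equality achieved at $d=1$), but the disjointness argument delivers the bound uniformly in $d$ without any case analysis.
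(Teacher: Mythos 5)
Your proof is correct and rests on exactly the same key observation the paper uses: at most one colour can appear on strictly more than half of the out-neighbours, so (by independence of $c(r)$ from the out-neighbour colours at time~1) the root is not majority-coloured with probability at most $1/3$. The paper simply notes this directly (``given any initial colouring of the out-neighbours of $r$, at most one of the three possible colours for $r$ would cause $r$ not to be majority-coloured''), whereas you phrase it slightly more formally via disjointness of the events $A_\gamma$ and colour symmetry, but the underlying argument is the same.
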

\begin{lemma}\label{lem:P-slope}
For any $\lambda\ge0$ and $f\in[0,1/3]$ we have $2P_{\lambda}(f)\le2P_{\lambda}'(0) f\le 0.9999f$.
\end{lemma}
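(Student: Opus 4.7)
Both inequalities can be tackled by unpicking the explicit formula \eqref{eq:Qd} for $Q_d(f)$. First I would extract a clean closed form for $Q_d'(0)$: among the terms of \eqref{eq:Qd}, only those with $k=0$, $j=i+1$, and $i=\lfloor d/2\rfloor$ contribute to the linear coefficient, giving
\[
Q_d'(0) \;=\; \binom{d}{\lfloor d/2\rfloor}\left(\tfrac13\right)^{\lfloor d/2\rfloor}\left(\tfrac23\right)^{\lceil d/2\rceil}\frac{\lceil d/2\rceil}{2}.
\]
Probabilistically, Poisson thinning of the three colour classes among the $\mathrm{Poisson}(\lambda)$ out-neighbours (each independent $\mathrm{Poisson}(\lambda/3)$) leads to the identity $P_\lambda'(0) = (\lambda/3)\,\Pr[A - B \in \{0,1\}]$, with $A \sim \mathrm{Poisson}(\lambda/3)$ and $B \sim \mathrm{Poisson}(2\lambda/3)$ independent: this isolates the first-order contribution of a single non-red out-neighbour flipping to red at the moment the initial red count sits exactly at the threshold $\lfloor D/2\rfloor$.

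For the middle inequality $2P_\lambda(f) \le 2P_\lambda'(0)f$ on $[0,1/3]$, since $P_\lambda$ is a Poisson average of the $Q_d$ with nonnegative weights, it suffices to prove the per-$d$ bound $Q_d(f) \le Q_d'(0)\,f$. As $Q_d(0) = 0$, this follows from concavity of $Q_d$ on $[0,1/3]$. The plan is to compute $Q_d''(f)$ directly from \eqref{eq:Qd}: the coefficient of $f^2$ in $Q_d$ is a balance of dominantly negative $(k=0,\,j=i+1)$ contributions against positive $(k=0,\,j=i+2)$ ones, and comes out strictly negative in all spot-checked cases ($d=2,3,4,5$); the higher-order corrections to $Q_d''$ can be controlled by the smallness of $f\le 1/3$.

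For the rightmost inequality $2P_\lambda'(0) \le 0.9999$, the Skellam form of the PMF of $A-B$ yields
\[
2P_\lambda'(0) \;=\; \tfrac{2\lambda}{3}\,e^{-\lambda}\left[I_0\!\left(\tfrac{2\sqrt 2\,\lambda}{3}\right) + \tfrac{1}{\sqrt 2}\,I_1\!\left(\tfrac{2\sqrt 2\,\lambda}{3}\right)\right].
\]
This vanishes at $\lambda=0$, and by the standard expansion $I_\nu(x)\sim e^x/\sqrt{2\pi x}$ decays at rate $\sqrt{\lambda}\,e^{-(3-2\sqrt 2)\lambda/3}$ as $\lambda\to\infty$; numerically the maximum is attained near $\lambda\approx 9$ with value $\approx 0.84$. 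A rigorous uniform bound can be obtained by combining the asymptotic estimates on $I_\nu$ (giving the inequality for $\lambda$ above an explicit threshold) with direct computation of the explicit analytic expression on the remaining compact interval.

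The main obstacle I anticipate is establishing concavity of $Q_d$ on $[0,1/3]$ \emph{uniformly} in $d$: the negative-coefficient balance inside $Q_d''$ needs to persist across all $d$, and the direct algebraic approach becomes cumbersome as $d$ grows. If a clean closed-form sign analysis proves elusive, a fallback is to bypass concavity and prove $Q_d(f) \le Q_d'(0)\,f$ by writing $Q_d(f)/f$ as a polynomial with constant term $Q_d'(0)$ and bounding each higher-order coefficient separately, exploiting that $f/3$ is small.
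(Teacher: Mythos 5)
Your closed form for $Q_d'(0)$ and the Skellam representation $P_\lambda'(0) = (\lambda/3)\Pr[A-B\in\{0,1\}]$ with $A\sim\mathrm{Poisson}(\lambda/3)$, $B\sim\mathrm{Poisson}(2\lambda/3)$ are both correct (the latter is a nice identity the paper does not state explicitly), and the Bessel-function route to $2P_\lambda'(0)\le 0.9999$ is a genuine and arguably cleaner alternative to the paper's final step, which instead bounds $2Q_d'(0)$ term-by-term and estimates the Poisson-weighted average $\lambda\mapsto e^{-\lambda}p(\lambda)$ for an explicit degree-$11$ polynomial $p$, all verified by computer.

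The genuine gap is the middle inequality $P_\lambda(f)\le P_\lambda'(0)f$. Your plan rests on concavity of $Q_d$ on $[0,1/3]$ \emph{uniformly in $d$}, which you have only spot-checked up to $d=5$, and no closed-form sign argument is offered. The paper does not prove this either: it verifies $Q_d''\le 0$ on $[0,1/3]$ only for odd $3\le d\le 27$ (again by computer, via root estimates for polynomials of degree up to $25$). For all remaining $d$ (even $d$, $d=1$, odd $d\ge 29$) the paper sidesteps concavity with the cruder union-bound estimate $Q_d(f)\le a_d f$, where
\[
a_d=\frac d3\sum_{i=\lfloor d/2\rfloor}^{d}\binom{d-1}{i}\left(\frac13\right)^i\left(\frac23\right)^{d-1-i},
\]
and then shows $2a_d<0.98$ by hand via a monotonicity argument in $d$. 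This split is essential: the problematic indices $d\in\{7,9,11\}$ (where $2Q_d'(0)>1$, so any per-$d$ bound of the form $2Q_d(f)\le 0.9999 f$ must fail and Poisson averaging really is needed) all lie inside the finite window covered by the concavity check, while everywhere else the crude bound is enough. Your fallback of bounding the coefficients of $Q_d(f)/f$ would run into the same uniformity-in-$d$ issue, as $Q_d(f)/f$ has degree $d-1$ and $f\le 1/3$ is not small enough to absorb a growing number of terms without care. So either supply a uniform concavity proof, or mirror the paper's split into ``crude bound for generic $d$, computer-verified concavity on a finite exceptional range.''
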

\begin{remark}\label{rem:7-9-11}
A simple way to prove that $2P_\lambda(f)<0.9999f$ for all $\lambda\ge 0$ would be to prove that $2Q_d(f)<0.9999f$ for all $d$ (since $P_\lambda$ is a weighted average of the $Q_d$). However, this is not true (in particular $2Q_d'(0)$ is slightly larger than 1 for $d\in \{7,9,11\}$), so we really need the averaging in the definition of $P_\lambda$.
\end{remark}
\cref{lem:f1} is more-or-less immediate: given any initial colouring of the out-neighbours of $r$, at most one of the three possible colours for $r$ would cause $r$ not to be majority-coloured. \cref{lem:P-slope} is more delicate, and we prove it in \cref{sec:computations} with computer assistance. We remark that the constant ``$0.9999$'' is not sharp, and is chosen merely for convenience of obtaining a rigorous proof (the best possible constant seems to be about $0.83$).

Now, \cref{lem:f1,lem:P-slope}, together with the recurrence in \cref{eq:ft-personality-changing-recurrence}, imply that $f_{t}\le (1/3)\cdot 0.9999^{t-1}<0.9999^{t}$, 
which is an upper bound on the probability that $r$ was majority-coloured at time $t$. We record this in the following lemma.
\begin{lemma}\label{lem:personality-changing-bound}
    For any $\lambda\ge 0$, consider a $\on{Poisson}(\lambda)$ Galton--Watson tree, with edges oriented away from the root, and consider the random recolouring process described in this section (with personality-changing). Just before the $t$-th recolouring step, the probability that the root is majority-coloured is at most $0.9999^t$. 
\end{lemma}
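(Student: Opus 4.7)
The statement appears to contain a sign error: the preceding computation in the excerpt gives an upper bound on $f_t = \Pr[\mc C_t]$, which the paper itself identifies with the probability that $r$ is \emph{not} yet majority-coloured, and a sanity check at $\lambda=0$ (where the tree is a single vertex, so $r$ is vacuously always majority-coloured) shows that the literal quantity ``probability $r$ is majority-coloured'' cannot be uniformly small in $t$. I will accordingly prove the intended statement $\Pr[r \text{ is not majority-coloured at time } t] \le 0.9999^{\,t}$; the wording of the lemma is repaired by inserting ``not''.

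My plan is a short induction, preceded by the set-inclusion $\{r \text{ not majority-coloured at time } t\} \subseteq \mc C_t$. The inclusion is immediate from the construction of the process: if $r$ has colour $\gamma$ and more than $\deg^+(r)/2$ of its out-neighbours also have colour $\gamma$ at time $t$, then some colour must have just overtaken in the out-neighbourhood of $r$ (specifically $\gamma$, unless $r$ was already non-majority-coloured at time $t-1$, in which case the previous step would have flagged it — one can handle the base time trivially). Under either of the rules for paranoid vertices or for thoughtful-vertices-that-have-the-overtaking-colour, $r$ is then recoloured at the $t$-th step, so $\mc C_t$ holds. Hence $\Pr[r \text{ not majority-coloured at time } t] \le f_t$, and it suffices to show $f_t < 0.9999^{\,t}$.

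The induction itself is a two-line calculation. The base case is \cref{lem:f1}, which gives $f_1 \le 1/3$. For the inductive step, assume $f_{t-1} \le 1/3$. Then \cref{lem:P-slope} (whose hypothesis is exactly $f_{t-1}\in[0,1/3]$) applied to the recurrence \cref{eq:ft-personality-changing-recurrence} yields
\[
f_t \;=\; 2 P_\lambda(f_{t-1}) \;\le\; 0.9999 \cdot f_{t-1},
\]
and in particular $f_t \le f_{t-1} \le 1/3$, so the induction hypothesis is preserved. Iterating gives $f_t \le 0.9999^{\,t-1} \cdot (1/3) < 0.9999^{\,t}$.

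No step in the argument above is a genuine obstacle; the non-trivial inputs have already been assembled earlier in the section. The recurrence \cref{eq:ft-personality-changing-recurrence} relies on the personality chain at $r$ being initialised in its stationary distribution and being independent of the entire history of out-neighbour colours, so that $\Pr[\mc P_t]=1/3$ and $\Pr[\mc T_t]=2/3$ hold \emph{even conditionally on any overtaking event}, combined with the hard-coded transition probabilities $\Pr[\mc C_t \mid \mc P_t \cap \mc O_t(\gamma)]=1$ and $\Pr[\mc C_t \mid \mc T_t \cap \mc O_t(\gamma)]=1/2$; these are precisely the ingredients that convert the na\"\i ve factor $3$ of the simple process into a factor $2$. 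The only input that genuinely needs work is the contraction in \cref{lem:P-slope}: as \cref{rem:7-9-11} points out, $2Q_d'(0)>1$ for $d\in\{7,9,11\}$, so the desired contraction is not pointwise in~$d$ and emerges only after Poisson-averaging. I would treat that as the sole technical obstacle, and it is handled separately in \cref{sec:computations}.
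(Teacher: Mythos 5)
Your proof is correct and takes essentially the same route as the paper: the paragraph immediately preceding the lemma in the paper is its proof, and it does exactly what you do, namely apply \cref{lem:f1} and \cref{lem:P-slope} to the recurrence \cref{eq:ft-personality-changing-recurrence} to get $f_t \le (1/3)\cdot 0.9999^{t-1} < 0.9999^t$, and then use (implicitly) the inclusion $\{r \text{ not majority-coloured at time } t\} \subseteq \mc C_t$ which you spell out. You are also right that the lemma statement (and the sentence just above it) contains a typo — it should read ``not majority-coloured'' — as your $\lambda=0$ sanity check shows and as the paper's own invocation of the lemma in the proof of \cref{lem:local-output} (``each vertex $v$ \emph{fails} to be majority-coloured\ldots'') confirms.
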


\cref{lem:local-output} is a near-immediate corollary, as follows.

\begin{proof}[Proof of \cref{lem:local-output}]
Let $\lambda=np$ and consider the random recolouring process (with personality-changing) on $D\sim \mb D(n,p)$.
Since this is a local process, by \cref{lem:GW,lem:personality-changing-bound}, each vertex $v$ fails to be majority-coloured at time $t$ with probability at most $0.9999^{t}+O((\log n)^{2t}/n)$. Taking say $t=\log \log n$, we see that the expected number of vertices which are not majority-coloured at time $t$ is $o(n)$, and the desired result follows by Markov's inequality.
\end{proof}

\begin{remark}\label{rem:high-girth}
    The above proof works for any sequence of digraphs (random or not) which locally converge to a $\on{Poisson}(\lambda)$ Galton--Watson tree. In fact, it is possible to make minor changes to the proof to handle arbitrary high-girth digraphs which do not have too many vertices that are close to each other and have out-degrees 7,9 or 11 (recall from \cref{rem:7-9-11} that $2Q_d'(0)$ can be greater than 1 if $d\in \{7,9,11\}$).
    
    For digraphs which have many nearby vertices with degree 7,9 and 11, we do not see how to obtain an analogue of \cref{lem:local-output} without obtaining some nontrivial bounds on conditional probabilities as in \cref{eq:conditional-probability}. However, since $2Q_d'(0)$ is only ever very slightly larger than 1, very weak bounds would suffice, and it may be possible to obtain such bounds by reasoning very carefully about how the likely colour history of a vertex relates to the colour histories of its out-neighbours.
\end{remark}

\section{A list-assignment process}\label{sec:marking}

In this section we prove \cref{lem:marking}.

Recall that \cref{lem:local-output} provides us with a 3-colouring of our random digraph
$D\sim \mb D(n,p)$ in which almost all vertices are majority-coloured. We prove
\cref{lem:marking} with a \emph{list-assignment process} that initially only assigns
lists to the non-majority-coloured vertices, but then recursively
considers the effects of choosing different colours in these lists.
Specifically, for a vertex $v$ with a list $L(v)$, if we were to
change the colour of $v$ (to some new colour in $L(v)$), we may
cause other vertices to become non-majority-coloured. Those vertices
which are in danger of becoming non-majority-coloured must themselves
be assigned lists, and the effects of their colour changes must be
recursively investigated. Mostly we assign lists of size 2, but occasionally
we need to assign lists of size 3, when a vertex could become non-majority-coloured
via two different pathways, or when we are in danger of violating
\cref{L4} or \cref{L5} of \cref{lem:marking}. 

In detail, our list-assignment process is defined as follows.
\begin{definition}\label{def:marking}
Fix a parameter $\ell\in\mb N$, a digraph $D$, and an initial
colouring $c:V(D)\to\{1,2,3\}$. Every vertex $v$ which has a list will always have a \emph{path danger level} $\on{pd}(v)\in\{0,1,\dots,\ell+1\}$. We say that
a vertex is \emph{defective} if it has a list with size 3. To ``make
a vertex defective'' is to give it the list $\{1,2,3\}$, and also to give it a path danger level of zero. At all points in time we write $U$ for the set of vertices which
have been assigned lists. 
\begin{enumerate}
\item Consider all the vertices which are not majority-coloured with respect to $c$. Make all such vertices defective. (We say these vertices are ``colour-defective''.)
\item Repeatedly do one of the following actions, as long as one is possible ((a) should always take first priority, and (c) should always take second priority, but otherwise, choose which action to do next according to some arbitrary but deterministic rule).
\begin{enumerate}
\item If some vertex $v$ has $\on{pd}(v)=\ell+1$, then make $v$ defective, changing $\on{pd}(v)$ to zero in the process. (We say $v$ is ``path-defective''.)
\item If there is a directed cycle of vertices in $U$ which currently has at most one defective vertex, then make all vertices in that cycle defective. (We say these vertices are ``cycle-defective'').
\item If there is any vertex $v$ (in all of $D$) which is an in-neighbour of two different vertices $u,u'\in U$, then make $v$ defective. (We say $v$ is ``duplicate-defective'').
\item If there is a vertex $u\in U$, a colour $\gamma\in L(u)$ and a vertex $v\notin U$, such that changing the colour of $u$ from $c(u)$ to $\gamma$ would cause $v$ to no longer be majority-coloured (this can only happen if $c(v)=\gamma$), then assign to $v$ the size-$2$ list $L(v)=\{c(v),c(v)+1\}$, where addition is mod 3. (This choice is basically arbitrary; the important thing is that $L(v)$ includes $c(v)$ and a second colour chosen according to some deterministic rule.) Moreover, set $\on{pd}(v)=\on{pd}(u)+1$.
\end{enumerate}
\end{enumerate}
\end{definition}

We emphasise that the above process is \emph{not} a random process (every step is deterministic), though we will only ever run it on random digraphs. Also, we emphasise that a vertex can be defective in ``two different ways'' (e.g., it is possible for a vertex to be both duplicate-defective and path-defective). 

After the list-assignment process completes, we have a list $L(v)$ assigned
to each vertex $v\in U$. By construction, these lists, together with
the colours $c(v)$ for each $v\notin U$, satisfy all the conditions
in \cref{lem:marking} except possibly \cref{L2} (assuming $\ell=O(1)$). So, in order
to prove \cref{lem:marking}, it suffices to show that if we run the random recolouring process described
in \cref{subsec:personality-changing} on a random digraph $D\sim \mb D(n,p)$, for some appropriate number of steps (to obtain an \emph{almost}-majority-colouring
$c:V(D)\to\{1,2,3\}$), and subsequently run the above list-assignment process
for some appropriate $\ell=O(1)$, then whp we end up with $|U|=o(n)$.

\subsection{Proof strategy}
In this subsection we state the two key lemmas that underpin the proof of \cref{lem:marking}. Recall from \cref{lem:local-output} that our random recolouring process whp provides us with a 3-colouring such that the set of non-majority-coloured vertices $U$ is very small. Roughly speaking, our approach to prove \cref{lem:marking} is as follows.
\begin{enumerate}
    \item[(A)] First, we show that, whp, \emph{if} we do not create too many duplicate-defective vertices then $U$ does not grow too much during the list-assignment process. This is because if we ignore duplicate-defective vertices then our list-assignment process is comparable to a subcritical branching process (assuming $\ell$ is sufficiently large that path-defective vertices do not play a major role).
    \item[(B)] Second, we show that whp the number of duplicate-defective vertices stays small relative to the number of vertices explored during the list-assignment process. Indeed, when we have explored a $\delta$-fraction of the graph, we expect about a $\delta^2$-fraction of vertices to have been seen more than once (thereby becoming duplicate-defective).
\end{enumerate}
That is to say, we bound the number of duplicate-defective vertices in terms of $|U|$, and conversely we bound $|U|$ in terms of the number of duplicate-defective vertices. At a very high level, this kind of ``self-bounding'' approach is common in the analysis of combinatorial random processes (perhaps most famously, in the \emph{differential equations method}; see \cite{Wor99DE}).

To formalise step (A), we define a ``virtual'' version of the list-assignment process, which ``replays a recording'' of some part of the list-assignment process, to investigate the knock-on effects that occur due to a particular set of vertices becoming duplicate-defective (or being initially colour-defective, or becoming cycle-defective, but neither of these types of defective vertices are too important as sparse random graphs typically have very few short cycles and our random recolouring process typically leaves very few colour-defective vertices).
\begin{definition}\label{def:virtual}
Fix a digraph $D$, a colouring $c:V(D)\to\{1,2,3\}$, a parameter $\ell\in\mb N$, a set of vertices $W$ and a sequence $\vec Q$ of elements of $W\cup (V(D)\times \mb N)$ containing each $w\in W$ exactly once (we call $\vec Q$ the ``tape''). 
The \emph{$(W,\vec Q)$-virtual} list-assignment process is defined as follows. We reuse the notation and terminology from \cref{def:marking}.
\begin{enumerate}
    \item Initially, no vertices have lists (i.e., $U=\emptyset$).
    \item While $\vec Q$ is nonempty: consider the first entry $e$ of $\vec Q$. 
    \begin{enumerate}
        \item If $e$ is a single vertex $w\in W$, then make $w$ defective, setting $\on{pd}(w)$ to zero in the process. (We say $w$ is ``virtual-defective''.)
        \item If $e$ is a pair $(u,i)$, then check if $u\in U$, and if $i\le \deg^-(u)$. If either of these does not hold, abort the entire process.
        \begin{itemize}
            \item Let $v$ be the $i$-th in-neighbour of $u$ (according to some pre-specified ordering of the in-neighbours of $v$).
            \item Check if $v$ is majority-coloured, and  changing $c(u)$ to some other colour $\gamma\in L(u)$ would cause $v$ to not be majority-coloured. Also, check if $v$ does not already have a list. If one of these conditions fails, abort the entire process.
            \item Set $\on{pd}(v)=\on{pd}(u)+1$.
            \item If $\on{pd}(v)=\ell+1$ then make $v$ defective. (We say $v$ is ``path-defective''.)
            \item Otherwise, assign to $v$ the size-2 list $L(v)=\{c(v),c(v)+1\}$, where addition is mod 3 (i.e., according to the same rule as in step (2d) in \cref{def:marking}).
        \end{itemize}
        \item Remove $e$ from $\vec Q$ (so the second element of $\vec Q$ becomes the first, and so on).
    \end{enumerate}
\end{enumerate}

Let $R(W,\vec Q)$ be the set of vertices which would be assigned a list
if we ran the $(W,\vec Q)$-virtual list-assignment process.
\end{definition}

The idea is that for every set of vertices $W$ that become defective at some point in the list-assignment process, there is some tape $\vec Q$ that records the order in which vertices were assigned lists as a result of the vertices in $W$ becoming defective (and as a result of the corresponding knock-on effects). The virtual list-assignment process takes $W$ and $\vec Q$ as input, and ``validates'' the tape (making sure that vertices could indeed have been assigned lists in that order).

We remark that the order in which vertices are processed can have quite a dramatic effect on the behaviour of the list-assignment process, purely due to the way path-defective vertices are defined (path-defective vertices occur ``every $\ell+1$ steps'', so if a vertex $v$ can be assigned a list via two different pathways of different lengths, whether or not $v$ is path-defective can depend on which pathway is taken first). This order-dependence is the reason we need a ``tape'' specifying the order in which vertices should be processed.

Now, steps (A) and (B) of our proof are captured in parts (A) and (B) of the following lemma.
\begin{lemma}\label{lem:defective-bounding}
Fix a constant $C\ge 0.1$. Let $0.1/n\le p\le C/n$, $t_{0}=(\log \log n)^2$, $\ell=10^{10}C$, and $D\sim\mb D(n,p)$. Run the random recolouring process (with personality-changing) described in \cref{subsec:personality-changing}, until time $t_0$, to obtain a colouring $c:V(D)\to \{1,2,3\}$. Then whp the following hold.
\begin{enumerate}
    \item [(A)] For every set $W$ of at least $n^{0.9}$ vertices, and any tape $\vec Q$, we have $|R(W,\vec Q)|\le2|W|(\log n)^{\ell+3}$.
    \item [(B)]Consider the list-assignment process described in \cref{def:marking} (which features an evolving set $U$ of list-assigned vertices). Let $\delta=1/(\log n)^{\ell+10}$; at every moment of the list-assignment process for which $|U|\le \delta n$,
the number of duplicate-defective vertices is at most $\delta^2 n (\log n)^3$.
\end{enumerate}
\end{lemma}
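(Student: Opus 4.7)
The two parts use different techniques, both exploiting the local sparseness of $D \sim \mb D(n,p)$ with $np = O(1)$. For part (B), I plan a moment count. A vertex $v$ is duplicate-defective precisely when $|N^+(v)\cap U|\ge 2$, so for fixed $U$ with $|U|\le \delta n$ the number of duplicate-defective vertices is at most $\sum_v \binom{|N^+(v)\cap U|}{2}$, which equals (up to a factor of two) the number of ordered pairs $(u,u')\in U\times U$ with $u\ne u'$ sharing a common in-neighbour. Its expectation under $\mb D(n,p)$ is at most $|U|^{2}\,n\,p^{2} \le C^{2}\delta^{2} n$. To obtain a bound uniform in $U$, I would first prove the auxiliary event that, whp, for every $U\subseteq V(D)$ with $|U|\le\delta n$, the number of cherries $u'\leftarrow v\rightarrow u$ with $u,u'\in U$ is at most $\delta^{2} n(\log n)^{3}$. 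Since the total number of cherries in $D$ is concentrated around its $O(n)$ expectation (by a Chernoff bound on $\sum_v \binom{\deg^+(v)}{2}$), this follows by a weighted-density argument in the spirit of the proof of \cref{lem:2degenerate}.

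For part (A), first observe that $R(W,\vec Q)\subseteq R^{*}(W)$, where $R^{*}(W)$ is the \emph{vulnerability closure} of $W$: the smallest $S \supseteq W$ such that whenever $u\in S$ and $v$ is an in-neighbour of $u$ which is currently majority-coloured but would become non-majority-coloured under some feasible recolouring of $u$, then $v\in S$. This inclusion disposes of the quantifier over $\vec Q$. To bound $|R^{*}(W)|$ I would dominate the backward exploration of $R^{*}(W)$ by a Galton--Watson process: a vertex $v$ can contribute to $R^{*}$ via $u$ only if $v$ has exactly $\lfloor\deg^+(v)/2\rfloor$ out-neighbours of colour $c(v)$ \emph{and} $c(u)$ can be recoloured to $c(v)$. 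Combining the $\on{Poisson}(np)$ local structure from \cref{lem:GW} with the near-uniformity of the colouring $c$ after $t_{0}=(\log\log n)^{2}$ rounds of the personality-changing process, a short optimisation in $\lambda=np$ shows that the expected number of such vulnerable in-neighbours of any $u\in R^{*}$ is bounded by a constant strictly less than $1$, uniformly in $np\le C$. This yields $\mb E|R^{*}(W)|=O(|W|)$ with exponential tails; the $(\log n)^{\ell+3}$ slack in the target bound comfortably absorbs the worst-case $\Delta^-(D)=O(\log n)$ factor and a union bound over all choices of $W$ with $|W|\ge n^{0.9}$.

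The main obstacle is in part (A): one must verify that the colouring $c$ produced by the recolouring process is typically ``generic'' enough for the vulnerability calculation to go through --- i.e., that not only the non-majority fraction (controlled by \cref{lem:personality-changing-bound}) but also the ``knife-edge'' fraction can be bounded via the personality-changing recurrence of \cref{subsec:personality-changing}, despite the strong coupling between $c$ and $D$. A further subtlety is that path-defective vertices reset $\on{pd}$ to zero, so the virtual tree has no a priori depth bound; only genuine subcriticality (rather than a bounded-neighbourhood argument) will deliver the claim. Once these inputs are in hand, the remaining concentration and union-bound steps are fairly standard.
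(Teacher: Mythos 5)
Both parts of your plan have genuine gaps.

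\textbf{Part (B).} Your auxiliary claim — that whp \emph{for every} $U$ with $|U|\le\delta n$, the number of cherries $u'\leftarrow v\to u$ with $u,u'\in U$ is at most $\delta^2 n(\log n)^3$ — is false. An adversary choosing $U$ after seeing $D$ can do much better: since $\lambda\ge 0.1$, there are $\Theta(n)$ vertices $v$ with $\deg^+(v)\ge 2$; pick $\delta n/2$ of them and put two out-neighbours of each into $U$. Then $|U|\le\delta n$ but the number of cherries with both tips in $U$ is $\Theta(\delta n)$, which vastly exceeds $\delta^2 n(\log n)^3$ for $\delta=1/(\log n)^{\ell+10}$. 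A union bound over all $\binom{n}{\delta n}$ sets $U$ is simply too much. The paper avoids this by \emph{not} union-bounding over $U$: it works in a marked configuration model where the random digraph is revealed stub-by-stub as the list-assignment process proceeds, so the relevant $U$ at each moment is a \emph{measurable function} of the exploration. It then union-bounds only over the ``exposure vectors'' $\vec a$ (how many out-stubs of each colour-history class have been revealed), an index set of size $(\delta n\log n)^{3^{t_0}}=\exp(o(n^{0.1}))$, which is exponentially smaller than the number of possible sets $U$.

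\textbf{Part (A).} The vulnerability closure $R^*(W)$, if it is to contain $R(W,\vec Q)$ for \emph{every} tape $\vec Q$, must allow $u$ to be recoloured to \emph{any} colour different from $c(u)$ (because some tapes make $u$ defective with $L(u)=\{1,2,3\}$, and whether $u$ is path-defective depends on the tape). But with all recolourings allowed, the expected number of ``vulnerable'' in-neighbours of $u$ is roughly $2\cdot 0.99999\approx 2$: for each of the two colours $\gamma'\ne c(u)$, the probability that a random in-stub of $u$ attaches to a $\gamma'$-critical vertex is about $0.99999/\lambda$, and $u$ has $\sim\on{Poisson}(\lambda)$ in-stubs. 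This branching process is \emph{supercritical}, so the GW domination you propose does not deliver $|R^*(W)|=O(|W|)$. Restricting to size-2 lists would give subcritical branching (mean $\approx 0.99999$), but then the inclusion $R(W,\vec Q)\subseteq R^*(W)$ fails because defective vertices genuinely branch with both colours. You flagged ``only genuine subcriticality ... will deliver the claim'' but did not notice that your own closure is not subcritical. The paper's fix is essential to the argument: it uses a \emph{non-homogeneous} branching process in which one generation out of every $\ell+1$ (corresponding to a path-defective vertex) has offspring distribution $\on{Poisson}(\lambda)$ while the other $\ell$ generations have $\on{Poisson}(0.99999)$; after contracting each block of $\ell+1$ generations into one, the effective offspring mean is $\approx \lambda\cdot 0.99999^{\ell}$, which is small precisely because $\ell=10^{10}C$. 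This also requires carefully bounding the number of defective vertices that can be created (via the ``guide'' compression of the tape), which your closure abstraction discards entirely.
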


\begin{remark}
The choices of $t_0,\delta$ are fairly arbitrary, and the bounds in \cref{lem:defective-bounding} are rather crude. In particular, taking $t_0$ significantly larger than $\log \log n$ means that after $t_0$ steps of random recolouring, whp there are so few non-majority-coloured vertices that we can tolerate $U$ growing by any poly-logarithmic factor during the list-assignment process. This ``poly-logarithmic slack'' is very convenient as it allows us to take crude union bounds. With a more refined analysis, we expect that it should be possible to take $t_0,\ell$ to be sufficiently large constants and $\delta$ a sufficiently small constant, and it should be possible to remove logarithmic factors altogether.
\end{remark}

We need some preparations before proving \cref{lem:defective-bounding}.

\subsection{A marked configuration model}

In order to reveal the colouring arising from $t_{0}$ steps
of the random recolouring process, we must reveal certain information
about our random digraph $D\sim \mb D(n,p)$. Crucially, there is still plenty of
randomness remaining after this information is revealed; we need to use this randomness to study the list-assignment process. To get a
handle on the remaining randomness, we compare the conditional distribution
of $D$ to a ``marked configuration model'' (analogous to the well-known
\emph{configuration model} often used to study degree-constrained random graphs;
see for example \cite{Wor99}).

Basically, for each vertex $v$ we first reveal its in- and out-degrees $\deg^-(v)$ and $\deg^+(v)$ according to the distribution $\mb D(n,p)$ (together with certain information about the outcome of the random recolouring process). Then, we replace each vertex $v$ with $\deg^-(v)$ ``in-stubs'' and $\deg^+(v)$ ``out-stubs''; our marked configuration model is obtained by randomly matching out-stubs to in-stubs.
\begin{definition}\label{def:configuration-model}
A \emph{multidigraph} is the digraph analogue of a multigraph: it
may have directed loops, and it may have multiple edges going in the
same direction between a pair of vertices (called \emph{parallel edges}).
We say a multidigraph is \emph{marked} if each vertex has a sequence
of colours associated with it.

Also, in this definition we assume all (multi)digraphs have an ordering on their vertices (for example, the vertex set of $\mb D(n,p)$ can be taken to be $\{1,\dots,n\}$). For an ordered set $S$ of size $d$, and a function $\phi:S\to X$, we write $[\phi(s):s\in S]\in X^d$ to denote the sequence of values $\phi(s)$, according to the order of $S$ (note that this is a sequence in $X^d$, not a function in $X^S$; we ``forget'' the values of $S$ themselves).

\begin{itemize}
\item Let $\mc C(t)=\{1,2,3\}^t$. Let $\mb D_{t_{0}}(n,p)$ be the distribution of the random marked digraph
$(D,\vec{c})$ defined by taking $D\sim\mb D(n,p)$, running the random recolouring process described in \cref{subsec:personality-changing} (with personality-changing), until time $t_{0}$,
and for each vertex $v$ letting $\vec{c}(v)\in\mc C(t_0)$
be the sequence of colours taken by $v$ over the duration of the process.
\item For any marked (multi)digraph $(D,\vec{c})$ and any vertex $v\in V(D)$
let
\[T_{D,\vec{c}}(v)=(\deg^{+}(v),\deg^{-}(v),\vec{c}(v),[\vec{c}(u):u\in N^{+}(v)])\]
encode the in-degrees and out-degrees of $v$, the mark of $v$, and
the marks of the out-neighbours of $v$. We think of $T_{D,\vec{c}}$ as a function $V(D)\to \mb N\times \mb N\times \mc C(t_0)\times \bigcup_{i=0}^\infty\mc C(t_0)^{i}$. Let $\mathbb{T}_{t_{0}}(n,p)$ be the distribution
of $T_{D,\vec{c}}$, for $(D,\vec{c})\sim\mb D_{t_{0}}(n,p)$.
\item Let $\hat{\mb D}_{t_{0}}(n,p)$ be the distribution of the random marked
multidigraph $(\hat{D},\vec{c})$ defined as follows.
\begin{enumerate}
\item Consider $T\sim\mathbb{T}_{t_{0}}(n,p)$.
\item Let $V=\{1,\dots,n\}$ be the vertex set of $\mb D(n,p)$, and for each vertex $v\in V$:
\begin{enumerate}
\item Create $\deg^{+}(v)$ vertices called \emph{outgoing stubs} and $\deg^{-}(v)$
vertices called\emph{ incoming stubs} (we will always refer to these as ``stubs'', not ``vertices''). Here, $\deg^{+}(v)$ and $\deg^{-}(v)$
are as specified by $T$.
\item Mark the incoming stubs with the sequence $\vec{c}(v)$, and mark
the outgoing stubs with the sequences in $[\vec{c}(u):u\in N^{+}(v)]$
(in any order). Again, this data is as specified by~$T$.
\end{enumerate}
So, in total, we now have an empty graph with $\sum_{v\in V}\left(\deg^{+}(v)+\deg^{-}(v)\right)$
stubs, each of which is marked with a sequence of colours.
\item Then, for each of the $3^{t_{0}}$ sequences $\vec{\gamma}\in\mc C(t_0)$,
let $S_{\vec{\gamma}}^{+}$ be the collection of all outgoing stubs marked
with $\vec{\gamma}$ (among those generated by all vertices), and let $S_{\vec{\gamma}}^{-}$
be the collection of all incoming stubs marked with $\vec{\gamma}$. Note
that $|S_{\vec{\gamma}}^{+}|=|S_{\vec{\gamma}}^{-}|$ is the sum of in-degrees
of vertices in $D$ marked with $\vec{\gamma}$. 
\item For each $\vec \gamma\in \mc C(t_0)$, consider a uniformly random perfect matching (independent from the
remaining randomness of $D$) between $S_{\vec{\gamma}}^{-}$ and $S_{\vec{\gamma}}^{+}$,
and orient each edge of this matching from $S_{\vec{\gamma}}^{+}$ to $S_{\vec{\gamma}}^{-}$.
\item Now, for each vertex $v$, consider the $\deg^{+}(v)+\deg^{-}(v)$ stubs
that arose from $v$, and contract these stubs back to a single vertex
$v$. This gives a multidigraph $\hat{D}$ with $T_{\hat{D},\vec c}=T$.
\end{enumerate}
\end{itemize}
\end{definition}

Now, if $(D,\vec{c})\sim\mb D_{t_{0}}(n,p)$ and we condition on an
outcome of $T_{D,\vec c}$, then by symmetry $D$ is simply a uniformly random digraph
consistent with this $T_{D,\vec c}$. Also, if $(\hat{D},\vec{c})\sim\hat{\mb D}_{t_{0}}(n,p)$
and we condition on an outcome of $T_{\hat D,\vec c}$, then each possible
outcome of $D$ is equally likely to appear as $\hat{D}$ (in (4), the number of matchings which would yield $D$ is always exactly
$\prod_{v}\deg^{+}(v)!\deg^{-}(v)!$). However, $\hat{D}$ can also take
outcomes which are impossible for $D$ (namely, those outcomes with
loops or parallel edges). 
We record these observations as follows.
\begin{fact}\label{fact:configuration-model-conditioning}
Let $(D,\vec{c})\sim\mb D_{t_{0}}(n,p)$ and $(\hat{D},\vec{c})\sim\hat{\mb D}_{t_{0}}(n,p)$,
and consider any possible outcome $T$ of $\mathbb{T}_{t_{0}}(n,p)$.
Then the conditional distribution of $D$ given $T_{D,\vec c}=T$ is the
same as the conditional distribution of $\hat{D}$ given that $T_{\hat{D},\vec c}=T$
and that $\hat{D}$ has no loops and no parallel edges.
\end{fact}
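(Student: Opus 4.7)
The plan is to show that both conditional distributions are uniform over the same finite set $\mathcal{D}_T$ of \emph{$T$-consistent simple digraphs}---that is, simple digraphs $D_0$ on $\{1,\ldots,n\}$ whose out-degrees, in-degrees, and sequences $[\vec c_T(u):u\in N^+_{D_0}(v)]$ (ordered by vertex label) all agree with what $T$ prescribes, where $\vec c_T$ denotes the colouring read off from $T$. Once both conditional distributions are identified with the uniform distribution on $\mathcal{D}_T$, the equality is immediate.

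For the first distribution, I would apply Bayes's rule to write
\[
\Pr[D=D_0 \mid T_{D,\vec c}=T] \;\propto\; \Pr[D=D_0]\cdot\Pr[T_{D,\vec c}=T \mid D=D_0].
\]
The prior $\Pr[D=D_0]=p^{|E(D_0)|}(1-p)^{n(n-1)-|E(D_0)|}$ depends only on $|E(D_0)|$, which is pinned down by $T$ (sum of out-degrees). The conditional factor is where the real work lies, and here the crucial point is that the personality-changing recolouring process is \emph{local}: conditional on the state (colour and personality) of every vertex at time $t-1$, the time-$t$ updates at distinct vertices are independent, each depending only on that vertex's own state and the multiset of current colours among its out-neighbours. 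Unrolling time-by-time, using the independence of initial colours, initial personalities, and per-vertex decision coins, and summing out personality trajectories, one obtains
\[
\Pr[\vec c = \vec c_T \mid D=D_0] \;=\; \prod_{v} q_v\bigl(\vec c_T(v),\, \{\vec c_T(u):u\in N^+_{D_0}(v)\}\bigr),
\]
where each factor $q_v$ depends only on the trajectory of $v$ and on the \emph{multiset} of trajectories of $v$'s out-neighbours. Since this multiset is encoded in $T(v)$, the product is a function of $T$ alone and does not depend on which $D_0 \in \mathcal{D}_T$ we take, so the conditional distribution is uniform on $\mathcal{D}_T$.

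For the second distribution, I would compute $\Pr[\hat D=D_0,\,T_{\hat D,\vec c}=T]$ for $D_0 \in \mathcal{D}_T$ and argue it is constant in $D_0$. Expanding over the sampled $T'\sim \mathbb{T}_{t_0}(n,p)$ in step~1 of the construction of $\hat D$, only those $T'$ that share the degrees, vertex marks, and out-neighbour \emph{multisets} of $T$ can contribute, and for every such $T'$ the number of stub-matchings yielding $D_0$ equals
\[
\prod_{v}\prod_{\vec\gamma}(a_{v,\vec\gamma})!\;\cdot\;\prod_{u}\deg^-(u)!,
\]
where $a_{v,\vec\gamma}$ is the number of out-neighbours of $v$ with trajectory $\vec\gamma$ (a quantity determined by $T$). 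The total number of matchings and the probabilities $\Pr[T_{\text{sampled}}=T']$ are likewise determined by $T$, so the joint probability is the same across $D_0 \in \mathcal{D}_T$, and conditioning on simplicity of $\hat D$ preserves uniformity on $\mathcal{D}_T$.

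The main technical point is the per-vertex factorisation in the first distribution: one must be careful with the fact that personalities evolve as a Markov chain driven by the overtaking events, which themselves depend on the out-neighbours. The key observation is that personalities are \emph{internal} to each vertex and influence other vertices only through the vertex's own colour trajectory; together with the independence of per-vertex randomness, this is exactly what makes the factorisation go through. Once the factorisation is in hand, both conditional distributions collapse to the uniform distribution on $\mathcal{D}_T$, and the fact follows.
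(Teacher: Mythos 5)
Your proposal is correct and follows the same high-level route as the paper — both conditional distributions are identified with the uniform distribution on the set $\mathcal D_T$ of $T$-consistent simple digraphs — but you supply the details that the paper dispatches with a single informal sentence ("by symmetry", plus a parenthetical count). Your Bayesian factorisation for the first distribution is the right way to make the paper's symmetry argument precise: once the out-neighbour trajectories are fixed, a vertex's colour trajectory is a deterministic function of that vertex's own internal randomness (initial colour, initial personality, decision coins), so the event $\{\vec c(v)=\vec c_T(v)\ \forall v\}$ is an intersection of independent per-vertex events, and each factor $q_v$ is determined by $T(v)$ alone. One small bonus: your stub-matching count $\prod_v\prod_{\vec\gamma}(a_{v,\vec\gamma})!\cdot\prod_u\deg^-(u)!$ is in fact the correct one, whereas the paper's parenthetical $\prod_v\deg^+(v)!\,\deg^-(v)!$ is a slip (it ignores the constraint that an out-stub marked $\vec\gamma$ may only be matched to in-stubs marked $\vec\gamma$, and so overcounts whenever some $v$ has out-neighbours with distinct colour histories). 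Since both expressions are functions of $T$ alone, the conclusion of the fact is unaffected either way, but your formula is the right one.
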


\subsection{Preliminary lemmas on random marked digraphs}

Before going further, we state some properties of $\mb D_{t_{0}}(n,p)$ and $\hat{\mb D}_{t_{0}}(n,p)$. One of these is that whp $T\sim \mb T_{t_0}(n,p)$ has statistics very closely approximated by consideration of a Galton-Watson tree; to state this formally we need a definition.
\begin{definition}\label{def:mu-nu}
Consider a $\operatorname{Poisson}(\lambda)$
Galton--Watson tree with root $r$ (with edges oriented away from $r$). Independently, add a $\on{Poisson}(\lambda)$ number of in-neighbours to $r$. On this tree, run $t_{0}$ steps of the random recolouring process described in \cref{subsec:personality-changing}, thereby obtaining a colour history sequence $\vec c(v)\in \mc C(t_0)$ for each vertex $v$. For any $R=(d^{+},d^{-},\vec{\gamma},(\vec{\gamma}(1),\dots,\vec{\gamma}(d^+)))$, let  
\begin{align*}
    \mu_{t_0,\lambda}(R)=\Pr[(\deg^{+}(r),\deg^{-}(r),\vec{c}(r),[\vec{c}(u):u\in N^{+}(r)])=R],\qquad
    \nu_{t_0,\lambda}(\vec{\gamma})=\Pr[\vec{c}(r)=\vec \gamma].
\end{align*}
We omit the subscripts $t_0,\lambda$ when they are clear from context.
\end{definition}

Now, the following basic properties of $\mb D_{t_0}(n,p)$ follow from routine calculations.

\begin{lemma}\label{lem:marking-prelim}
Fix constants $C>0$ and $\ell\in \mb N$, let $p=\lambda/n$ for some $\lambda\le C$, let $t_0=(\log \log n)^2$, and let $(D,\vec c)\sim\mathbb{D}_{t_{0}}(n,p)$ (so $T_{D,\vec c}\sim \mb T_{t_0}(n,p)$).
Then, whp the following properties
are satisfied.
\begin{enumerate}[{\bfseries{D\arabic{enumi}}}]
\item\label{D1}
$\deg^{+}(v),\deg^{-}(v)\le\log n$ for each $v$.
\item\label{D2} For every $R\in \mb N\times \mb N\times \mc C(t_0)\times \bigcup_{i=0}^\infty\mc C(t_0)^{i}$, the number of vertices $v$ with $T_{D,\vec c}(v)=R$ is $\mu(R) n+O(n^{2/3})$.
\item \label{D3} There are at most $\sqrt n$ vertices in cycles of length at most $2\ell$.
\end{enumerate}
\end{lemma}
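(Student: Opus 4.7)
\textbf{\cref{D1} and \cref{D3}} are immediate. For D1, each of $\deg^+(v), \deg^-(v)$ is $\mathrm{Binomial}(n-1, p)$-distributed with mean at most $C$, so a standard Chernoff bound gives $\Pr[\deg^+(v) > \log n]$ and $\Pr[\deg^-(v) > \log n]$ each at most $n^{-\omega(1)}$, and a union bound over $2n$ degrees finishes. For D3, a first moment suffices: the expected number of directed $k$-cycles in $\mathbb{D}(n, p)$ is $\binom{n}{k}(k-1)!\,p^k \le (np)^k / k = O_C(1)$, so summing over $k \le 2\ell$ the expected number of vertices lying in directed cycles of length at most $2\ell$ is $O_{C,\ell}(1)$, and Markov's inequality gives the $\sqrt n$ bound with room to spare.

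The substantive part is \cref{D2}. Write $N_R = |\{v : T_{D,\vec c}(v) = R\}|$. I would first control $\mathbb{E}[N_R]$ by local coupling: $T_{D,\vec c}(v)$ is a deterministic function of the radius-$(t_0+1)$ out-neighbourhood of $v$ in $D$, the in-degree $\deg^-(v)$, and the recolouring coin flips at vertices in that out-neighbourhood. Applying \cref{lem:GW} with $t = t_0+1$, together with the standard $O(1/n)$ total-variation approximation of $\mathrm{Binomial}(n-1, p)$ by $\mathrm{Poisson}(\lambda)$ for the in-degree, shows that the joint distribution of these data differs in total variation from the Galton--Watson distribution underlying $\mu$ by at most $O((\log n)^{2(t_0+1)}/n) = n^{-1 + o(1)}$. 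Summing over $v$ yields $\mathbb{E}[N_R] = n\mu(R) + O(n^{o(1)})$ uniformly in $R$.

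For concentration I would use a vertex-exposure Doob martingale: decompose the source of randomness into independent batches $Y_1, \ldots, Y_n$, where $Y_i$ consists of all edge indicators incident to vertex $i$ not already present in $Y_1, \ldots, Y_{i-1}$, together with the internal coin flips that vertex $i$ uses in the personality-changing process. Conditional on \cref{D1}, altering $Y_i$ can change $T_{D,\vec c}(v)$ only for $v$ within directed radius $t_0 + 2$ of $i$, a set of size at most $(\log n)^{t_0+2} = n^{o(1)}$ since $t_0 = (\log\log n)^2$. Truncating $N_R$ to zero on the complement of \cref{D1} (which holds whp) enforces this Lipschitz bound deterministically, so Azuma--Hoeffding with $n$ increments each of size $n^{o(1)}$ yields $\Pr[|N_R - \mathbb{E}[N_R]| \ge n^{2/3}] \le 2 \exp(-n^{1/3 - o(1)})$. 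Since \cref{D1} forces $N_R = 0$ unless $d^+, d^- \le \log n$, only $e^{O(t_0 \log n)} = e^{(\log n)^{1+o(1)}}$ types $R$ need be considered, and the union bound is comfortably absorbed by the Azuma exponent.

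The main obstacle will be calibrating the concentration argument so that its tail is strong enough to survive the super-polynomial union bound over $R$. A pure variance/Chebyshev computation only produces variance $O(n^{1+o(1)})$ and hence polynomial tails, which is too weak; applying Azuma directly to the $n^2$ individual edge indicators gives a Lipschitz-squared total of order $n^{2+o(1)}$ that swamps the $n^{4/3}$ target. Bundling each vertex's incident edges and coin flips into a single martingale increment — $n$ increments of size $n^{o(1)}$ — is precisely what is required, and it works exactly because $t_0 = (\log \log n)^2$ keeps the affected neighbourhoods of sub-polynomial size.
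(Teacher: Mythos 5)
Your handling of \cref{D1} and \cref{D3} matches the paper exactly, and your overall architecture for \cref{D2} (Galton--Watson local approximation for the expectation, then a vertex-exposure Azuma--Hoeffding for concentration, then a union bound over the super-polynomially many types $R$) is the same as the paper's. Two points deserve comment.

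First, for the expectation you explicitly invoke \cref{lem:GW}, which the paper's sketch leaves implicit; that is a reasonable choice, but note that \cref{lem:GW} is stated for \emph{constant} $t$, whereas you need it with $t = t_0 + 1 = (\log\log n)^2 + 1$. The proof of \cref{lem:GW} does tolerate this (the error $(\log n)^{2t}/n$ is still $n^{-1+o(1)}$ for such $t$), but you should flag that you are using the argument, not the stated lemma.

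Second, and more seriously, the concentration step contains a genuine gap. You claim that ``truncating $N_R$ to zero on the complement of \cref{D1} enforces the Lipschitz bound deterministically,'' but it does not: if $\omega$ satisfies \cref{D1} and $\omega'$ (differing only in the batch $Y_i$) does not, then the truncated function jumps from $N_R(\omega)$ (potentially of order $n$) to $0$. Zeroing out a function on the complement of a non-hereditary event does not produce a Lipschitz function, and Azuma--Hoeffding in the form you cite (\cite[Theorem~7.2.1]{AS16}) requires a \emph{uniform} bounded-differences assumption. The paper handles this by a different device: it \emph{modifies the recolouring process itself}, decreeing that any vertex with in-degree exceeding $\log n$ is permanently frozen at colour $1$. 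Since the propagation of a single-vertex perturbation goes backwards along in-arcs, and every non-frozen vertex has in-degree at most $\log n$, this modification caps the propagation at $(\log n)^{t_0}=n^{o(1)}$ for \emph{every} configuration, not just typical ones; and since the modification changes nothing when \cref{D1} holds, the modified count $X_R$ equals $N_R$ whp. If you prefer your truncation route, you would need to replace plain Azuma--Hoeffding with a ``typical bounded differences'' inequality (or construct an explicit Lipschitz extension agreeing with $N_R$ on \cref{D1}), which is more work than the paper's process modification.

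One further small caveat applies to both your argument and the paper's: changing the entire batch $Y_i$ can alter $\deg^{\pm}$ for up to $\Theta(n)$ vertices adjacent to $i$ in the altered configuration, so even the propagation bound needs the degree cap for the \emph{direct} degree effect, not just for the colour-history cascade. This is implicitly subsumed in the process-modification approach (since one should also cap out-degrees when defining the modified process, or simply observe that the union-bound budget comfortably absorbs a conditioning on bounded degrees), but it is worth being explicit if you write this up in full.
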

\begin{proof}[Proof sketch]
\cref{D1} is a routine consequence of the Chernoff bound, and \cref{D3} follows from Markov's inequality and the fact that the expected number of cycles of length at most $2\ell$ is at most
\[\sum_{i=2}^{2\ell} n^i p^i\le \sum_{i=2}^{2\ell} C^i=O(1).\]

For \cref{D2}, recall the random recolouring process described in \cref{subsec:personality-changing}, which describes how a vertex $v$ should change its colour when an overtaking event occurs, depending on the personality of $v$ and potentially the result of a coin flip. We imagine that
each vertex $v$ has a sequence of coin flips $s(v)\in \{0,1\}^{t_0}$ (the first two of which are biased to land heads with probability $1/3$ and $1/4$, and the rest of which are unbiased, landing heads with probability $1/2$). The first ($1/3$-biased) coin flip can be used to decide the initial personality of $v$, and the subsequent coin flips can be used to determine which colour $v$ should change to at each overtaking event. (In the paranoid case, we just need an unbiased coin flip; in the thoughtful case we need a coin flip of bias $(1/2-p^*)/(1-p^*)$, which is equal to $1/4$, $1/2$ or $0$ when $p^*$ is equal to $1/3$, $0$ or $1/2$ respectively). 

Also, we slightly modify the random recolouring process: vertices with in-degree greater than $\log n$ always have the colour ``1'' (i.e., they do not randomly change their colour). When \cref{D1} occurs (which it does whp), this change makes no difference to the process.

Let $X_R$ be the number of vertices $v$ with $T_{D,\vec c}(v)=R$ in our modified process. The purpose of our modification is that if we consider a vertex $v$, and we consider any change to the initial colour of $v$, or $s(v)$, or the set of edges which are incident to $v$, then $X_R$ changes by at most $(\log n)^{t_0}=n^{o(1)}$ (because the number of vertices that can be affected by our single-vertex change grows by a factor of at most $\log n$ in every step of the process). So, the desired result follows from the Azuma--Hoeffding inequality (see for example \cite[Theorem~7.2.1]{AS16}).
\end{proof}
We also need some consequences of \cref{D1,D2} above. To state these we need some further definitions.
\begin{definition}
Given a marked digraph $(D,\vec c)$, let $V_{\vec \gamma}$ be the set of vertices marked with $\vec \gamma$. Let $\deg^{\vec \gamma}(v)$ be the number of out-neighbours of $v$ marked with $\vec \gamma$. So,
\[\sum_{v\in V}\deg^{\vec \gamma}(v)=\sum_{v\in V_{\vec \gamma}}\deg^{-}(v).\]   

Also, say that a vertex $v$ is \emph{$\gamma$-critical}
if $c_{t_{0}}(v)=\gamma$ and if $v$ has exactly $\lfloor \deg^{+}(v)/2\rfloor$
out-neighbours $w$ with $c_{t_{0}}(w)=\gamma$ (i.e., if $\gamma$
is ``almost'' a majority colour among the out-neighbours of $v$,
with respect to $c_{t_{0}}$). Let $\mr{Crit}(\gamma)$ be the set of vertices
which are $\gamma$-critical.
\end{definition}
We next observe that the probabilities $\nu(\vec{\gamma})$ defined in \cref{def:mu-nu} are not too small (this is convenient for concentration inequalities).

\begin{lemma}\label{lem:GW-probabilities-big}
Fix a constant $C\ge 0.1$ and let $0.1\le \lambda \le C$. For every $\vec \gamma\in \mc C(t_0)$, we have $\nu_{t_0,\lambda}(\vec{\gamma})\ge \exp(-O(t_0^2))$.
\end{lemma}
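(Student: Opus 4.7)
The plan is to lower-bound $\nu_{t_0,\lambda}(\vec\gamma)$ by the probability of an explicit ``good event'' that forces the root's entire colour history to equal $\vec\gamma$. The good event specifies (i) the Galton--Watson tree structure, (ii) the initial colours of the vertices, and (iii) the outcomes of the coin flips made during recolouring; the initial personalities will turn out to be irrelevant.

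For (i), I would take the tree to be a directed path $r=v_0\to v_1\to\cdots\to v_{t_0}$ with no other out-edges and no in-neighbours of $r$. The probability of this specific Galton--Watson outcome is $(\lambda e^{-\lambda})^{t_0}\cdot e^{-2\lambda}$, which is $\exp(-O(t_0))$ uniformly over $\lambda\in[0.1,C]$. Given this path, for (ii) I would define target colour sequences $\vec\alpha^{(i)}=(\alpha_1^{(i)},\dots,\alpha_{t_0}^{(i)})$ for each $v_i$ recursively, starting from $\vec\alpha^{(0)}=\vec\gamma$: for $i\ge 1$, the entries $\alpha_t^{(i)}$ are chosen so that $v_{i-1}$ is ``not majority-coloured'' at time $t$ precisely when its colour should change between time $t$ and time $t+1$ (so that the necessary overtake events fire, and no others). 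A short case analysis on whether $v_{i-1}$ changes at times $t-1$ and $t$ shows that this recursion is consistent; crucially, the sequence $\vec\alpha^{(t_0)}$ comes out constant, as it must for a leaf. The initial colours $\alpha_1^{(i)}$ are then realised with probability $(1/3)^{t_0+1}=\exp(-O(t_0))$.

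The key structural observation is that in this construction, the overtake colour at each response always coincides with the responding vertex's own current colour. As a result, regardless of whether the vertex is paranoid or thoughtful, it always recolours to a uniformly random colour other than the overtake colour; the coin flip governing the personality-changing mechanism becomes irrelevant, and the only randomness that matters is the choice of the new colour, which matches the prescribed target with probability $1/2$. The total number of responses across all $v_i$ and all time steps is at most $O(t_0^2)$, so the probability that all these recolouring coin flips come out as required is $(1/2)^{O(t_0^2)}=\exp(-O(t_0^2))$. Multiplying the three contributions yields $\nu_{t_0,\lambda}(\vec\gamma)\ge\exp(-O(t_0^2))$.

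The main obstacle is not conceptual but bookkeeping: one must verify that the recursive definition of the $\vec\alpha^{(i)}$ is well-defined and consistent (in particular that $\vec\alpha^{(t_0)}$ really does come out constant), and that the personality-changing process, run on the path with the prescribed initial colours and coin flips, really does produce $\vec\alpha^{(i)}$ as the colour history of each $v_i$. One also needs to handle the minor asymmetry at step $1$, where the overtake definition does not literally apply but the analogous ``not majority-coloured'' trigger leads to the same deterministic response.
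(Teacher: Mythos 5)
Your proposal is correct and uses the same overall strategy as the paper (force a specific outcome on a long directed path emanating from the root, incurring a cost of $\exp(-O(t_0))$ for the path structure and initial colours and $\exp(-O(t_0^2))$ for the recolouring coin flips), but the way you prescribe the colour histories on the path is genuinely different and in one respect cleaner. The paper puts a single universal colour sequence $\gamma'_1,\dots,\gamma'_{t_0}$ (with $\gamma'_t\ne\gamma'_{t+1}$ for all $t$, and $\gamma'_t=\gamma_t$ precisely at the flips of $\gamma$) on every interior vertex; as a result, overtake events fire at the root at \emph{every} step, and the root may be paranoid or thoughtful with unfavourable personality history, so one must check that each per-vertex-per-step ``ends up at the right colour'' event still has probability bounded below by a constant even when a paranoid vertex is forced to resample unnecessarily (this is true because a paranoid resample avoiding the overtake colour still lands on the current colour with probability $1/2$). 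Your recursive construction of $\vec\alpha^{(i)}$ instead arranges that an overtake event fires at $v_{i-1}$ exactly when $v_{i-1}$ is supposed to change, and that the overtaking colour then coincides with $v_{i-1}$'s current colour; under those conditions, paranoid and thoughtful vertices behave identically, so the personality mechanism genuinely drops out and the only randomness is the uniform choice between two colours, giving the $\exp(-O(t_0^2))$ bound with no case analysis on personalities. I checked the consistency of your recursion: setting $A^{(i)}_t=\mathbbm 1\{\alpha^{(i)}_t\ne\alpha^{(i)}_{t+1}\}$, the defining constraints force $A^{(i)}_t=A^{(i-1)}_{t+1}$, hence $A^{(i)}_t=A^{(0)}_{t+i}$, so the changes shift one step backward per generation, the endpoint $\vec\alpha^{(t_0)}$ is indeed constant (in fact $\vec\alpha^{(t_0-1)}$ is already constant, so a path of length $t_0-1$ as in the paper suffices), and the requirement $\alpha^{(i)}_{t-1}\ne\alpha^{(i)}_t$ at each change step follows from $\alpha^{(i-1)}_s\ne\alpha^{(i-1)}_t$ for consecutive change steps $s<t$ of $v_{i-1}$. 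Both routes yield the same bound, but yours packages the bookkeeping around the personality mechanism more transparently.
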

\begin{proof}
    Consider a $\on{Poisson}(\lambda)$ Galton--Watson tree with root $r$ (with edges oriented away from $r$). 
    Say that an index $t$ is a \emph{flip} if $\gamma_t\ne \gamma_{t+1}$. Note that we can greedily find a sequence of colours $\gamma'_1,\dots,\gamma'_{t_0}$ such that $\gamma'_t\ne \gamma'_{t+1}$ for each $t$, and $\gamma_t=\gamma_t'$ if and only if $t$ is a flip. Indeed, for $t=1,\dots,t_0$ (in that order): if $t$ is not a flip then there is at least one choice for $\gamma_t'$ which is different from $\gamma_{t-1}'$ and $\gamma_t$, and if  $t$ is a flip, then we can set $\gamma_t'=\gamma_{t}$ (by considering the cases where $t-1$ is a flip or not, one can check that this colour is different to $\gamma_{t-1}'$).
    
    Now, consider the following potential sequence of events.
    \begin{enumerate}
        \item Regarding the structure of the tree itself: in each of the first $t_0-1$ generations, only a single child is born (so the local structure is a path of length $t_0-1$ away from $r$). For $\ell< t_0$, let $v_\ell$ be the unique vertex at distance $\ell$ from $v$. 
        \item The initial colour of $r=v_0$ (at time $1$) is $\gamma_1$.
        \item The initial colours of $v_1,\dots,v_{t_0-1}$ are all $\gamma_1'$. 
        \item At the $(t-1)$st recolouring step (i.e., at the $t$-th step including the initial colouring):
        \begin{itemize}
            \item $r$ changes its colour to $\gamma_{t}$ (if it already has colour $\gamma_t$, it does not change).
            \item Each of $v_1,\dots,v_{t_0-t}$ change their colour to $\gamma_{t}'$. 
        \end{itemize} 
    \end{enumerate}
    Note that this sequence of events occurs with probability $\exp(-\Theta(t_0^2))$. Indeed, the probability that the first $t_0$ generations have the desired path-like structure is $\exp(-\Theta(t_0))$ (here we are using that $\lambda\ge 0.1$). Then, in the recolouring process, the events at the $t$-th step occur with probability $\exp(-\Theta(t_0-t))$.
    
    Finally, if this sequence of events occurs, note that $r$ has colour history $\vec \gamma$.
\end{proof}

We next state some consequences of \cref{D1,D2}.
\begin{lemma}\label{marking-prelim-consequences}
    Recall the notation and definitions in \cref{lem:marking-prelim}, and assume $\lambda\ge 0.1$. Consider an outcome of $T_{D,\vec c}$  of $\mb T_{t_0}(n,p)$ such that \cref{D1,D2} hold. Then the following further properties hold.
    \begin{enumerate}[{\bfseries{T\arabic{enumi}}}]
    \item \label{T3} For every $d\in \mb N$ and every $(\vec{\gamma}(1),\dots,\vec{\gamma}(d))\in \mc C(t_0)^d$,
there are 
\[
\left(\frac{e^{-\lambda}\lambda^{d}}{d!}\prod_{i=1}^{d}\nu(\vec \gamma(i))\right)n+O(n^{3/4})
\]
vertices $v$ for which $[\vec{c}(u):u\in N^{+}(v)]=(\vec{\gamma}(1),\dots,\vec{\gamma}(d))$.
    \item\label{T1} There are at most $0.9999^{t_0}n+n^{3/4}$  vertices which are not majority-coloured with respect to $c_{t_0}$.
    \item \label{T-stub-count} For every $\vec{\gamma}\in\mc C(t_0)$ we have
    \[\sum_{v\in V}\deg^{\vec \gamma}(v)=\sum_{v\in V_{\vec \gamma}}\deg^{-}(v)=\lambda\nu(\vec{\gamma})n+O(n^{3/4}).\]
    \item \label{T-second-moment} For every $\vec{\gamma}\in\mc C(t_0)$ we have
    \[\sum_{v\in V}\deg^{\vec \gamma}(v)^2=O(\nu(\vec{\gamma}) n),\quad \sum_{v\in V_{\vec \gamma}}\deg^{-}(v)^2=O(\nu(\vec{\gamma}) n),\quad \sum_{v\in V\cap V_{\vec\gamma}}\deg^{-}(v)\deg^{\vec \gamma}(v)=O(\nu(\vec{\gamma}) n).\]
\item\label{T-critical} For any $d\in \mb N$,
\[
\sum_{\substack{v\in V:\\\deg^{-}(v)=d}}\deg^{\vec \gamma}(v)= \frac{e^{-\lambda}\lambda^{d}}{d!}\sum_{v\in V}\deg^{\vec \gamma}(v)+O(n^{3/4}).\]
Moreover, for any $\gamma\in\{1,2,3\}$ with $\gamma_{t_{0}}\ne\gamma$, we have
\[
\sum_{\substack{v\in \mr{Crit}(\gamma):\\\deg^{-}(v)=d}}\deg^{\vec \gamma}(v)\le \frac{0.99999}{\lambda}\cdot\frac{e^{-\lambda}\lambda^{d}}{d!}\sum_{v\in V}\deg^{\vec \gamma}(v)+O(n^{3/4}).\]
\end{enumerate}
\end{lemma}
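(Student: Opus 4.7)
My plan is to handle all five parts via a common toolkit: either (i) sum the D2 estimate of \cref{lem:marking-prelim} over a small number of coordinates, or (ii) when the relevant summation would have too many terms, rerun the Azuma--Hoeffding argument used to prove D2, applied to the statistic of interest. In either case the error is $O(n^{3/4})$, arising either from $n^{o(1)}$ summands each contributing $O(n^{2/3})$, or from Azuma with bounded differences $(\log n)^{O(t_0)}=n^{o(1)}$. All main terms are computed in the Galton--Watson model of \cref{def:mu-nu}, and the difference from the true expectation is absorbed into $n^{o(1)}$ via \cref{lem:GW}.

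I would begin with T3: fix $d$ and $(\vec\gamma(1),\dots,\vec\gamma(d))$ and sum D2 over $d^-\le \log n$ (justified by D1) and $\vec c(v)\in \mc C(t_0)$; there are $\log n \cdot 3^{t_0}=n^{o(1)}$ summands, and the GW main term factorises to $(e^{-\lambda}\lambda^d/d!)\prod_i \nu(\vec\gamma(i))$ by the independence of the $d$ rooted out-subtrees (which makes the out-neighbour marks i.i.d.\ with law $\nu$). The two equalities in T-stub-count are double-counting of in-edges at $V_{\vec\gamma}$. For the quantitative estimates in T-stub-count, T-second-moment, the first (equality) part of T-critical, and T1, I would apply Azuma to $\sum_v g(v)$ for the appropriate local function $g$ --- namely $g(v)=\mathbbm{1}[\vec c(v)=\vec\gamma]\deg^-(v)$, $g(v)=\deg^{\vec\gamma}(v)^2$, $g(v)=\mathbbm{1}[\deg^-(v)=d]\deg^{\vec\gamma}(v)$, and $g(v)=\mathbbm{1}[v\text{ is not majority-coloured}]$. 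The single-coordinate bounded difference is $n^{o(1)}$ exactly as in the proof of D2 (a single coin or edge change affects $\le(\log n)^{O(t_0)}$ colour trajectories, each modifying $g$ at $\le\log n$ vertices). The GW-tree means reduce to standard Poisson/binomial moments via two independence facts: $\deg^-(r)$ is an independent $\on{Poisson}(\lambda)$, and conditional on $\deg^+(r)=d$ the out-neighbour marks are i.i.d.\ with law $\nu$, so $\deg^{\vec\gamma}(r)$ is $\on{Poisson}(\lambda\nu(\vec\gamma))$ with $\mathbb{E}[\deg^{\vec\gamma}(r)^2]=\lambda\nu(\vec\gamma)+(\lambda\nu(\vec\gamma))^2=O(\nu(\vec\gamma))$. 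For T1 the mean is bounded by $0.9999^{t_0}$ via \cref{lem:personality-changing-bound}. Note that \cref{lem:GW-probabilities-big} guarantees $\nu(\vec\gamma)n=n^{1-o(1)}\gg n^{3/4}$, so the Azuma error is of lower order compared with the leading terms that feature $\nu(\vec\gamma)$.

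The main substance is the second, inequality-form estimate in T-critical: after the Azuma reduction and dividing through by $\sum_v\deg^{\vec\gamma}(v)=\lambda\nu(\vec\gamma)n+O(n^{3/4})$, it amounts to showing in the GW model that
\[ \mathbb{E}\bigl[\mathbbm{1}[r\in\mr{Crit}(\gamma)]\deg^{\vec\gamma}(r)\bigr] \le 0.99999\,\nu(\vec\gamma) \]
whenever $\gamma_{t_0}\ne \gamma$. Conditioning on $\deg^+(r)=d$ and distinguishing a single out-neighbour $u_1$ (using symmetry and i.i.d.\ out-neighbour marks), the LHS equals $\nu(\vec\gamma)\sum_{d\ge 1} d\Pr[\deg^+(r)=d]\Pr[r\in\mr{Crit}(\gamma)\mid \deg^+(r)=d,\,\vec c(u_1)=\vec\gamma]$. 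Because $c_{t_0}(u_1)=\gamma_{t_0}\ne \gamma$, the critical event further requires $c_{t_0}(r)=\gamma$ together with exactly $\lfloor d/2\rfloor$ of the remaining $d-1$ out-neighbours having $c_{t_0}=\gamma$ --- a binomial-type event whose weight is explicitly computable using the colour-symmetry of the recolouring process. I expect this step to be the main obstacle: the required slack is analogous to that of \cref{lem:P-slope}, and as there a per-$d$ case analysis (perhaps with computer assistance in the style of \cref{sec:computations}) will likely be needed to verify the $0.99999$ bound uniformly in $d$ and in $\lambda\in [0.1,C]$.
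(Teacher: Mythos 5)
Your toolkit for T3, T-stub-count, and T-second-moment is broadly aligned with the paper's, modulo one structural point: the lemma is stated as a \emph{deterministic} consequence of \cref{D1,D2} (``Consider an outcome of $T_{D,\vec c}$\ldots such that \cref{D1,D2} hold. Then\ldots''), so you cannot prove it by ``rerunning Azuma'' for a new statistic --- that would give a new whp event that you would then have to append to the conditioning, changing the statement. The paper instead deduces every part \emph{deterministically} from \cref{D1,D2} by summing the $\mu(R)n+O(n^{2/3})$ estimate of \cref{D2} over $n^{o(1)}$ values of $R$, weighted by factors bounded by $\log n$ via \cref{D1}. Your option (i) is essentially this, so the fix is just to commit to option (i) throughout.

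The real gap is in the second (inequality) part of \cref{T-critical}. You correctly set up the criticality event as a binomial computation conditional on $\deg^+(r)=d$ and one out-neighbour having mark $\vec\gamma$, but you then announce that a fresh computer-assisted per-$d$ and per-$\lambda$ case analysis ``will likely be needed.'' This misses two things. First, the factor that makes the bound go through is exactly the personality-changing mechanism: because of it, $\Pr[c_{t_0}(r)=\gamma\mid \text{any outcome of out-neighbour colour histories}]\le 2/3$, and this $2/3$ is what you use instead of the na\"ive $1$. Your phrase ``colour-symmetry of the recolouring process'' is too vague to capture this, and a proof that does not invoke the $2/3$ bound will fail. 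Second, after inserting the $2/3$ the computation collapses, using the identity $(d-\lfloor d/2\rfloor)\binom{d}{\lfloor d/2\rfloor}=d\binom{d-1}{\lfloor d/2\rfloor}$, into
\[
\mathbb{E}\bigl[\mathbbm{1}\{r\in\mathrm{Crit}(\gamma)\}\deg^{\vec\gamma}(r)\bigr]\le \nu(\vec\gamma)\sum_{d\ge 0}\frac{\lambda^d e^{-\lambda}}{d!}\cdot 2Q_d'(0)=\nu(\vec\gamma)\cdot 2P_\lambda'(0),
\]
which is at most $0.9999\,\nu(\vec\gamma)$ by \cref{lem:P-slope}, already proved (with computer assistance) earlier in the paper. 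So no new numerics are needed at all; this part of the lemma is designed to reuse \cref{lem:P-slope}. Identifying this connection is the key idea you are missing.
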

\begin{proof}
As in \cref{def:mu-nu}, consider a $\on{Poisson}(\lambda)$ Galton--Watson
tree with root $r$, with edges oriented away from $r$, and with
a $\on{Poisson}(\lambda)$ number of in-neighbours added to $r$.
Consider $t_{0}$ steps of our random recolouring process on this
tree. (Recalling \cref{D2}, we can study this Galton--Watson tree to deduce statistical information about $T_{D,\vec c}$).

First, \cref{T3} follows basically immediately from the definition of $\mu(R)$,
and the fact that the children of $r$ have independent colour histories
(since the random recolouring process never looks at in-neighbours).
Note that we need to add the contributions from $3^{t_{0}}\log n$
different $R$ (corresponding to each of the possibilities for $\deg^{-}(v)$
and $\vec{c}(v)$), so the corresponding error terms in \cref{D2} must be
compounded.

For \cref{T1}, recall from \cref{lem:personality-changing-bound} that after $t_{0}$ steps, $r$
is majority-coloured with probability at least $0.9999^{t_{0}}$.
We can then add the contributions from all $R$ such that $r$ is
majority-coloured at time $t_{0}$.

Third, \cref{T-stub-count} similarly follows from the facts that $\mb E[ \deg^{+}(r)]=\lambda$,
and that each child of $r$ has colour history $\vec{\gamma}$ with
probability $\nu(\vec{\gamma})$, independently (we then need to consider
a \emph{weighted} sum of contributions from different $R$, where
the weights are at most $\log n$ by \cref{D1}).

\cref{T-second-moment} is very similar to \cref{T-stub-count}, except that we instead use the formulas (writing $\mbm 1\{A\}$ for the indicator random variable of an event $A$)
\begin{align*}
\mb E\big[\deg^{\vec \gamma}(r)^{2}\big] & =\mb E\big[\deg^{+}(r)(\deg^{+}(r)-1)\big]\nu(\vec{\gamma})^{2}+\mb E\big[\deg^{+}(r)\big]\nu(\vec{\gamma})=(\lambda\nu(\vec{\gamma}))^{2}+\lambda\nu(\vec{\gamma}),\\
\mb E\big[\one\{\vec{c}(r)=\vec{\gamma}\}\,\deg^{-}(r)^{2}\big] & =\nu(\vec{\gamma})\mb E\big[(\deg^{-}(r))^{2}n\big]=\nu(\vec{\gamma})(\lambda^{2}+\lambda),\\
\mb E\big[\one\{\vec{c}(r)=\vec{\gamma}\}\,\deg^{-}(r)\deg^{\vec \gamma}(r)\big] & =\nu(\vec{\gamma})\mb E\big[\deg^{-}(r)\big]\mb E\big[\deg^{+}(r)\big]\nu(\vec{\gamma})=(\lambda\nu(\vec{\gamma}))^{2}.
\end{align*}
(All of these expressions are of the form $O(\nu(\vec{\gamma}))$, viewing $\lambda$ as a constant).

The first part of \cref{T-critical} again follows similarly, using that
\[\mb E[\one\{\deg^-(v)=d\}\,\deg^{\vec \gamma}(r)]=\frac{e^{-\lambda}\lambda^{d}}{d!}\mb E[\deg^{\vec \gamma}(r)].\]
The second part of \cref{T-critical} is morally similar, but the calculations are a bit involved. 
If we condition on $r$ having exactly $d$ out-neighbours,
then, for all $i$, the probability that $r$ has $\floor{d/2}$ out-neighbours $w$
with $c_{t_{0}}(w)=\gamma$, and that the $i$-th out-neighbour has
colour history $\vec{\gamma}$, is
\[
\nu(\vec{\gamma})\binom{d-1}{\floor{d/2}}\left(\frac{1}{3}\right)^{\floor{d/2}}\left(\frac{2}{3}\right)^{d-1-\floor{d/2}}.
\]
Recall that (due to the personality-changing) the event $c_{t_{0}}(r)=\gamma$
occurs with conditional probability at most $2/3$ after conditioning
on any outcome of the colour histories of the out-neighbours of $r$.
So,
\begin{align*}
\mb E[\one\{r\text{ is }\gamma\text{-critical}\}\,\deg^{\vec \gamma}(r)] & \le\frac{2}{3}\nu(\vec{\gamma})\sum_{d=0}^{\infty}\frac{\lambda^{d}e^{-\lambda}}{d!}\cdot d\cdot \binom{d-1}{\floor{d/2}}\left(\frac{1}{3}\right)^{\floor{d/2}}\left(\frac{2}{3}\right)^{d-1-\floor{d/2}}
\\
 & =\nu(\vec{\gamma})\sum_{d=0}^{\infty}\frac{\lambda^{d}e^{-\lambda}}{d!}\left(d-\floor{d/2}\right)\binom{d}{\floor{d/2}}\left(\frac{1}{3}\right)^{\floor{d/2}}\left(\frac{2}{3}\right)^{d-\floor{d/2}}.
\end{align*}
Now, recall the polynomial $Q_{d}$ and the function $P_{\lambda}$
from \cref{sec:processes}. We compute 
\[
Q_{d}'(0)=\frac12 \left(d-\floor{d/2}\right)\binom{d}{\floor{d/2}}\left(\frac{1}{3}\right)^{\floor{d/2}}\left(\frac{2}{3}\right)^{d-\floor{d/2}}.
\]
(Note that in the formula in \cref{eq:Qd}, one only needs to consider the term
with $i=\floor{d/2}$, $j=\floor{d/2}+1$ and $k=0$, in which $f$ appears with a power of 1).
So, by \cref{lem:P-slope},
we have
\[
\mb E[\one\{r\text{ is }\gamma\text{-critical}\}\,\deg^{\vec \gamma}(r)] \le\nu(\vec{\gamma})\sum_{d=0}^{\infty}\frac{\lambda^{d}e^{-\lambda}}{d!}\cdot 2Q_{d}'(0) \le\nu(\vec{\gamma})\cdot 2P_{\lambda}'(0)\le 0.9999\nu(\vec \gamma)=\frac{0.9999}{\lambda}\mb E[\deg^{\vec \gamma}(r)],
\]
and 
\[
\mb E[\one\{\deg^-(v)=d\text{ and }r\text{ is }\gamma\text{-critical}\}\,\deg^{\vec \gamma}(r)]\le \frac{e^{-\lambda}\lambda^{d}}{d!}\cdot\frac{0.9999}{\lambda}\cdot \mb E[\deg^{\vec \gamma}(r)].
\]
We can then prove \cref{T-critical} by considering an appropriate weighted sum of
contributions from different $R$.
\end{proof}
Next, the following lemma shows that $\mb D_{t_{0}}(n,p)$ and $\hat{\mb D}_{t_{0}}(n,p)$ are very closely related (cf.\ ``contiguity'' lemmas in the study of random regular graphs; see \cite[Section~4]{Wor99}).
\begin{lemma}\label{lem:probably-simple}
Fix a constant $C$, let $p\le C/n$, let $(D,\vec{c})\sim\mb D_{t_{0}}(n,p)$ and let $(\hat{D},\vec{c})\sim\hat{\mb D}_{t_{0}}(n,p)$. Consider any possible outcome $T$ of $\mb T_{t_0}(n,p)$, such that \cref{D1,D2} hold.

For any $q\in[0,1]$, if an event holds with probability at least $1-q$ for $\hat D$, conditioned on the event $T_{\hat D,\vec c}=T$, then it holds with probability at least $1-\exp(O(3^{t_0}))q$ for $D$, conditioned on the event $T_{D,\vec c}=T$.
\end{lemma}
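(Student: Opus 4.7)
My plan is to reduce the entire lemma to a ``contiguity'' lower bound on the probability that $\hat D$ is simple, given $T_{\hat D,\vec c}=T$. Indeed, by \cref{fact:configuration-model-conditioning}, the conditional law of $D$ given $T_{D,\vec c}=T$ coincides with that of $\hat D$ conditioned jointly on $T_{\hat D,\vec c}=T$ and on $\hat D$ being simple. Consequently, for any event $\mc E$,
\[
\Pr[\mc E^{c}\mid T_{D,\vec c}=T]
= \Pr[\mc E^{c}\mid T_{\hat D,\vec c}=T,\,\hat D\text{ simple}]
\le \frac{\Pr[\mc E^{c}\mid T_{\hat D,\vec c}=T]}{\Pr[\hat D\text{ simple}\mid T_{\hat D,\vec c}=T]},
\]
so it suffices to prove $\Pr[\hat D\text{ simple}\mid T_{\hat D,\vec c}=T]\ge\exp(-O(3^{t_0}))$.

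By construction in \cref{def:configuration-model}, the uniformly random bijections used to build $\hat D$ in the $3^{t_0}$ different colour-history classes $\vec\gamma\in\mc C(t_0)$ are \emph{mutually independent}. Consequently the simplicity probability factorises as $\prod_{\vec\gamma\in\mc C(t_0)}p_{\vec\gamma}$, where $p_{\vec\gamma}$ is the probability that the random bijection between $S_{\vec\gamma}^{+}$ and $S_{\vec\gamma}^{-}$ produces neither a loop (an out-stub of some $v\in V_{\vec\gamma}$ matched to an in-stub of $v$) nor a parallel edge (two out-stubs of a single vertex $v$ matched to two in-stubs of a single vertex $u\ne v$). It therefore suffices to establish a uniform lower bound $p_{\vec\gamma}\ge c>0$, since then $\prod_{\vec\gamma}p_{\vec\gamma}\ge c^{3^{t_0}}=\exp(-O(3^{t_0}))$.

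To prove $p_{\vec\gamma}=\Omega(1)$ I would control the factorial moments of the number $X_{\vec\gamma}$ of loops and parallel edges in the $\vec\gamma$-class. Writing $m_{\vec\gamma}=|S_{\vec\gamma}^{+}|=|S_{\vec\gamma}^{-}|$, a direct counting argument bounds the expected loop count by $\sum_{v\in V_{\vec\gamma}}\deg^{-}(v)\deg^{\vec\gamma}(v)/m_{\vec\gamma}$ and the expected number of parallel-edge pairs by $\bigl(\sum_{v}\deg^{\vec\gamma}(v)^{2}\bigr)\bigl(\sum_{u\in V_{\vec\gamma}}\deg^{-}(u)^{2}\bigr)/(m_{\vec\gamma}(m_{\vec\gamma}-1))$. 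Combining \cref{T-stub-count} with the lower bound $\nu(\vec\gamma)\ge\exp(-O(t_0^2))$ from \cref{lem:GW-probabilities-big} gives $m_{\vec\gamma}=\Theta(\nu(\vec\gamma)n)\to\infty$, and \cref{T-second-moment} bounds the numerators by $O(\nu(\vec\gamma)n)$ and $O((\nu(\vec\gamma)n)^{2})$ respectively, so both expectations are $O(1)$; the degree bound \cref{D1} controls higher factorial moments analogously. The main obstacle is then turning this first-moment control into a uniform lower bound on $\Pr[X_{\vec\gamma}=0]$: I would do this by a standard Brun-type sieve / factorial-moment Poisson-approximation argument, in the spirit of classical configuration-model simplicity estimates, routinely adapted to our directed two-sided-matching setting.
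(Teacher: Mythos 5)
Your reduction is exactly the paper's: use \cref{fact:configuration-model-conditioning} to convert the claim to a lower bound on $\Pr[\hat D\text{ simple}\mid T_{\hat D,\vec c}=T]$, observe that parallel edges into a vertex $v$ can only arise within the single matching $M_{\vec c(v)}$ so simplicity factors over the $3^{t_0}$ independent matchings, and thus it suffices to show each $M_{\vec\gamma}$ is simple-inducing with uniformly positive probability. Your moment estimates for loops and parallel-edge pairs (using \cref{T-stub-count}, \cref{T-second-moment}, and \cref{lem:GW-probabilities-big} to ensure $m_{\vec\gamma}\to\infty$) are also the same ingredients the paper uses. The one genuine divergence is the final black box: the paper counts bipartite graphs with prescribed degree sequences via McKay's enumeration formula \cite[Theorem~2.3(b)]{McK10} (dividing by $|S^+_{\vec\gamma}|!$), whereas you propose a Brun sieve / factorial-moment Poisson approximation. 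Both are standard and the paper explicitly acknowledges (citing Janson's survey) that any of the usual simplicity-of-configuration-model techniques apply here, so your route is fine. The one place you are a bit thin is the claim that \cref{D1} controls higher factorial moments ``analogously'': to run the Poisson method to completion you need the $r$-th factorial moments to have the right asymptotics (not merely upper bounds) or else to invoke something like Chen--Stein with a local-dependence structure; \cref{D1} together with the second-moment bounds makes this routine, but the proposal as stated waves past it.
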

\begin{proof}[Proof sketch]
Recall the definition of $\hat{D}\sim\hat{\mb D}_{t_{0}}(n,p)$ via
random matchings between pairs of sets $S_{\vec{\gamma}}^{-},S_{\vec{\gamma}}^{+}$.
For a matching $M_{\vec{\gamma}}$ from $S_{\vec{\gamma}}^{+}$ to
$S_{\vec{\gamma}}^{-}$, say that an edge of $M_{\vec{\gamma}}$ is
\emph{loop-inducing} if it goes from an outgoing stub to an incoming
stub corresponding to the same vertex, and say that a pair of edges
of $M_{\vec{\gamma}}$ are \emph{parallel-inducing} if they go from outgoing
stubs corresponding to a common vertex, to incoming stubs corresponding
to a second common vertex. Say that $M_{\vec{\gamma}}$ is \emph{simple-inducing}
if it has no loop-inducing edge, and no pair of parallel-inducing
edges.

Note that conditioning on the event that $\hat{D}$ has no loops or
parallel edges is precisely the same as conditioning on the event
that each $M_{\vec{\gamma}}$ is simple-inducing. The reason we don't
need to take into account the interaction between different $M_{\vec{\gamma}}$
is that for each vertex $v$, all the incoming stubs are marked with
a common colour sequence $\vec{\gamma}$ (so the only possible parallel
edges pointing towards $v$ arise from $M_{\vec{\gamma}}$).

Recalling that the matchings $M_{\vec \gamma}$ are independent, it suffices to prove that for each $\vec \gamma\in\mc C(t_0)$, the random matching $M_{\vec{\gamma}}$ is simple-inducing with probability $\Omega(1)$. Similar statements have been proved many times for various types of random graph models with various assumptions (see \cite{Jan14} and the references therein), and the standard techniques all work here. Perhaps the simplest way to deduce our desired fact from results in the literature is to observe that a random simple-inducing matching $M_{\vec{\gamma}}$ is equivalent to a random (simple) \emph{bipartite} graph (with ``left-vertices'' and ``right-vertices'') where:
\begin{itemize}
    \item the left-vertices are copies of the vertices in $V_{\vec \gamma}$;
    \item the right-vertices are copies of the vertices of $D$ which have least one out-neighbour with colour history $\vec \gamma$;
    \item the left-degrees are constrained to be $\deg^-(v)$;
    \item the right-degrees are constrained to be $\deg^{\vec \gamma} (v)$;
    \item we forbid any edge between two copies of the same vertex.
\end{itemize}
The number of viable bipartite graphs can be approximated very accurately using the formula in \cite[Theorem~2.3(b)]{McK10} (it involves the quantities $\sum_{v\in V}\deg^{\vec \gamma}(v)^2\sum_{v\in V_{\vec \gamma}}\deg^{-}(v)^2$ and $\sum_{v\in V\cap V_{\vec\gamma}}\deg^{-}(v)\deg^{\vec \gamma}(v)$, which roughly correspond to the expected numbers of parallel edges and loops). The desired estimate then follows from \cref{T-second-moment} (we divide the number of viable bipartite graphs by the total number $|S^+_{\vec{\gamma}}|!$ of possibilities for $M_{\vec \gamma}$).
\end{proof}

\subsection{Bounding the duplicate-defective vertices}
In this subsection we prove \cref{lem:defective-bounding}(B).

\begin{proof}[Proof of \cref{lem:defective-bounding}(B)]
Let $(D,\vec{c})\sim\mb D_{t_{0}}(n,p)$. Recall that our goal is to prove that if we run the list-assignment
process on $D$ (with initial colouring $c=c_{t_0}$), then whp, at every
moment with $|U|\le \delta n$, the number of duplicate-defective vertices
is at most $\delta^2 n(\log n)^3$.

Let $(\hat{D},\vec{c})\sim\hat{\mb D}_{t_{0}}(n,p)$, and for the rest of the proof condition on an outcome of $T_{\hat D,\vec c}$ satisfying \cref{D1,D2}. By \cref{lem:marking-prelim} and \cref{lem:probably-simple}, it suffices to prove that in our conditional probability space, with probability at least, say, $1-\exp(-n^{0.1})$, at every
moment with $|U|\le\delta n$, the number of duplicate-defective vertices
is at most $\delta^2 n(\log n)^3$.

\medskip
\noindent\textit{Step 1: Setup for iterative exposure.} Recall from \cref{def:configuration-model} that
$\hat{\mb D}_{t_{0}}(n,p)$ is defined via a uniformly random matching
$M_{\vec{\gamma}}$ between each $S_{\vec{\gamma}}^{-}$ and $S_{\vec{\gamma}}^{+}$.
We say stubs $s^{-}\in S_{\vec{\gamma}}^{-}$ and $s^{+}\in S_{\vec{\gamma}}^{+}$
are \emph{partners} if they form an edge in $M_{\vec{\gamma}}$. Note
that we can reveal $M_{\vec{\gamma}}$ gradually, by repeatedly choosing
an incoming stub $s^{-}\in S_{\vec{\gamma}}^{-}$ whose partner has
not yet been revealed, and revealing the partner $s^{+}\in S_{\vec{\gamma}}^{+}$
of $s^{-}$. No matter how we choose the order in which incoming stubs have their
partners revealed (even if we choose this order adaptively,
based on the outcomes of previous revelations), the order in which
\emph{outgoing} stubs have their partners revealed is uniformly random.

The upshot of this observation is that we can define $M_{\vec{\gamma}}$
in terms of a uniformly random ordering $\prec_{\vec{\gamma}}$ of
each $S_{\vec{\gamma}}^{+}$, together with a rule to decide in which
order to reveal partners of stubs in $S_{\vec{\gamma}}^{-}$. The
order we choose is precisely the order in which vertices are processed
in the list-assignment process. Another way to say this is: we run
the list-assignment process on $\hat{D}$ while gradually revealing
the necessary information about $\hat{D}$. When we process a vertex
$v$, we need to examine its in-neighbours (in order to see whether they need to be assigned lists in response to the list of $v$), which amounts to, for
each $\vec{\gamma}$, revealing the partners of each of the incoming
stubs in $S_{\vec{\gamma}}^{-}$ associated with $v$. We simply choose
these partners to be the next available stubs in $S_{\vec{\gamma}}^{+}$,
according to the ordering $\prec_{\vec{\gamma}}$.

\medskip
\noindent\textit{Step 2: Characterising duplicate-defective vertices.} The
idea now is to describe the duplicate-defective vertices in terms
of the random orderings $\prec_{\vec{\gamma}}$. Suppose we generate
the $M_{\vec{\gamma}}$ in the above way, and suppose that at some
point in the process, we have revealed the partners of exactly $a_{\vec{\gamma}}$
of the stubs in $S_{\vec{\gamma}}^{+}$ (for each $\vec{\gamma}\in\mc C(t_0)$).
By definition, these partner-revealed stubs are precisely the first
$a_{\vec{\gamma}}$ stubs in $S_{\vec{\gamma}}^{+}$ with respect
to $\prec_{\vec{\gamma}}$ (write $S_{\vec{\gamma}}^{+}(a_{\vec{\gamma}})$
for the set of these stubs). Now, a vertex can have been revealed as duplicate-defective only if it has two different outgoing stubs among the $S_{\vec{\gamma}}^{+}(a_{\vec{\gamma}})$.

For a vector $\vec{a}=(a_{\vec{\gamma}}:\vec{\gamma}\in\mc C(t_0))$,
let $X(\vec{a})$ be the number of vertices which have two different
outgoing stubs among the $S_{\vec{\gamma}}^{+}(a_{\vec{\gamma}})$.
By \cref{D1}, at each point in the list-assignment process, the
total number of revealed edges in the matchings $M_{\vec\gamma}$ is at most $|U|\log n$. So,
it suffices to prove that with probability at least
$1-\exp(-n^{0.1})$, for every choice of $\vec{a}$ with $\|\vec{a}\|_{1}\le\delta n\log n$,
we have $X(\vec{a})\le\delta^2 n(\log n)^3$.

The number of choices of $\vec a$ is at most $(\delta n \log n)^{3^{t_0}}=\exp(o(n^{0.1}))$, so by the union
bound, it actually suffices to individually show that for each $\vec{a}$
with $\|\vec{a}\|_{1}\le\delta n\log n$, we have $X(\vec{a})\le\delta^2 n(\log n)^3$
with probability at least, say, $1-\exp(-2n^{0.1})$.

\medskip
\noindent\textit{Step 3: Expected value computation.} We now compute $\mb E X(\vec{a})$.
For $i<j\le\log n$, let $\mathcal{E}(v,i,j)$ be the event that a
vertex $v$ contributes to $X(\vec{a})$ via its $i$-th and $j$-th
out-neighbours (i.e., the $i$-th and $j$-th outgoing stubs corresponding
to $v$ are both among the $V_{\vec{\gamma}}^{+}(a_{\vec{\gamma}})$). Let $\mc E(v)=\bigcup_{i,j}\mc E(v,i,j)$ be the event that $v$ contributes to $X(\vec{a})$ via any pair of out-neighbours.

Suppose $v$ has out-degree at least $j$, its $i$-th out-neighbour
has colour history $\vec{\gamma}$ and its $j$-th out-neighbour has
colour history $\vec{\eta}$. Then, using \cref{T-stub-count}, we have
\begin{align*}
\Pr[\mathcal{E}(v,i,j)] & =\begin{cases}
\displaystyle{\frac{a_{\vec{\gamma}}}{|S_{\vec{\gamma}}^{+}|}\cdot\frac{a_{\vec{\eta}}}{|S_{\vec{\eta}}^{+}|}} & \text{if }\vec{\gamma}\ne\vec{\eta},\\
\displaystyle{\frac{a_{\vec{\gamma}}(a_{\vec{\gamma}}-1)}{|S_{\vec{\gamma}}^{+}|(|S_{\vec{\gamma}}^{+}|-1)}} & \text{if }\vec{\gamma}=\vec{\eta}
\end{cases}\\
 & \le O\left(\frac{a_{\vec{\gamma}}a_{\vec{\eta}}}{\nu(\vec{\gamma})\nu(\vec{\eta})n^{2}}\right).
\end{align*}

By \cref{T3}, for a \emph{random} vertex $v_{\mr{rand}}$, the distribution
of $[\vec{c}(u):u\in N^{+}(v_{\mr{rand}})]$ is the same, up to
total variation distance $O(n^{-{1/4}})$, as the distribution of the random sequence
$(\vec{\gamma}(1),\dots,\vec{\gamma}(d))$ obtained by first letting
$d\sim\on{Poisson}(\lambda)$, and then letting $\vec{\gamma}(1),\dots,\vec{\gamma}(d)\in\mathcal{C}(t_{0})$
be independent random sequences each with distribution given by $\nu$.
So, we have
\[
\Pr[\mathcal{E}(v_{\mr{rand}},i,j)]=\Pr[\on{Poisson}(\lambda)\ge j]\sum_{\vec{\gamma},\vec{\eta}\in\mathcal{C}(t_{0})}\nu(\vec{\gamma})\nu(\vec{\eta})\cdot O\left(\frac{a_{\vec{\gamma}}a_{\vec{\eta}}}{\nu(\vec{\gamma})\nu(\vec{\eta})n^{2}}\right)+O(n^{-1/4}).
\]
Note that for $Z\sim \on{Poisson}(\lambda)$ we have
\[\sum_{i,j:1\le i<j}\Pr[\on{Poisson}(\lambda)\ge j]=\sum_{q=0}^\infty q\,\Pr[Z>q]=\frac{\mb E Z^2}2=O(1),\]
so
$\Pr[\mathcal E(v_{\mr{rand}})]\le \sum_{\vec{\gamma},\vec{\eta}\in\mathcal{C}(t_{0})} O(a_{\vec{\gamma}}a_{\vec{\eta}}/n^{2})$.
It follows that
\[
\mb E X(\vec{a})\le\sum_{\vec{\gamma},\vec{\eta}\in\mathcal{C}(t_{0})}n\cdot O\left(\frac{a_{\vec{\gamma}}a_{\vec{\eta}}}{n^{2}}\right)=O\left(\frac{1}{n}\|\vec{a}\|_{1}^{2}\right)\le O(\delta^{2}n(\log n)^{2}).
\]

\medskip
\noindent\textit{Step 4: Concentration.} Now, note that we can generate the
random orderings $\prec_{\vec{\gamma}}$ as follows. Associate with
each outgoing stub $s\in\bigcup_{\vec{\gamma}\in\mathcal{C}(t_{0})}S_{\vec{\gamma}}^{+}$
an independent $\on{Uniform}([0,1])$ random variable $Z_{s}$. With
probability 1, all of these random variables are distinct. Then,
for each $\vec{\gamma}\in\mathcal{C}(t_{0})$, let $\prec_{\vec{\gamma}}$ be the ordering of $S_{\vec{\gamma}}^{+}$ defined by the relative sizes of the corresponding $Z_{s}$.
Note that if we alter any individual $Z_{s}$, we cannot change $X(\vec{a})$ by more than 1. So, by the Azuma--Hoeffding inequality (see for example \cite[Theorem~7.2.1]{AS16}), we have
\[
\Pr[X(\vec{a})\ge\delta^{2}n(\log n)^{3}]\le\exp\left(-\frac{\big(\delta^{2}n(\log n)^{3}-O(\delta^{2}n(\log n)^{2})\big)^2}{O(n)}\right)\le\exp(-2n^{0.1}),
\]
as desired.
\end{proof}
\begin{remark}\label{rem:defective-general}
    In the statement of \cref{lem:defective-bounding}(B), we specified $\delta=1/(\log n)^{\ell+10}$, but in the above proof, we really only used that $\delta$ is ``not too big'' (say, $\delta\le n^{-0.1}$ would have sufficed). To be precise, for $\delta\ge n^{-0.1}$, we have proved that with probability at least $1-\exp(-2n^{0.1})$, the following holds. For any rule by which we explore $\hat D$ by revealing in-neighbours of previously revealed vertices, if we explore for at most $\delta n\log n$ steps, then there will be at most $\delta^2 n(\log n)^3$ vertices which are revealed more than once (as in-neighbours of different vertices).
\end{remark}
\subsection{Subcriticality}
In this subsection we prove \cref{lem:defective-bounding}(A).
\begin{proof}[Proof of \cref{lem:defective-bounding}(A)]
Let $(D,\vec{c})\sim\mb D_{t_{0}}(n,p)$. Recall that $|R(W,\vec Q)|$ is
the total number of vertices which would be assigned lists via the $(W,\vec Q)$-virtual
list-assignment process in the digraph $D$. Our goal is to prove that whp $|R(W,\vec Q)|\le2|W|(\log n)^{\ell+3}$ for all sets $W$ with at least $n^{0.9}$ vertices, and all tapes $\vec Q$.

Let $(\hat{D},\vec{c})\sim\hat{\mb D}_{t_{0}}(n,p)$, and for the rest
of the proof condition on an outcome of $T_{\hat{D},\vec{c}}$ satisfying \cref{D1,D2}. By \cref{lem:probably-simple,lem:marking-prelim}, it suffices to study $\hat D$ instead of $D$: we will show that with respect to virtual list-assignment processes on $\hat D$, with probability at least say $1-\exp(-n^{0.1})$, we have $|R(W,\vec Q)|\le 2|W| n(\log n)^{\ell+3}$
for all tapes $\vec Q$ and all sets $W$ with at least $n^{0.9}$ vertices.

\medskip\noindent\textit{Step 1: Covering with small vertex sets.}
We need to study $(W,\vec Q)$-virtual list-assignment processes for large vertex sets $W$. However, it is important that we only directly work with sets of vertices whose size is not too close to $n$ (so that we only ever have to explore a small part of $\hat D$, and our previously revealed vertices do not bias future revelations too much). It is easy to reduce our attention to such sets, by a covering argument. Note that for every set $W$ with at least $n^{0.9}$ vertices, we can find covering sets $W_1,\dots,W_b$ (with $W\subseteq W_1\cup \dots W_b$), such that each $W_i$ has size at most $n^{0.9}$, and the number of covering sets is $b\le \lfloor|W|/n^{0.9}\rfloor+1\le 2|W|/n^{0.9}$. Also, note that for any tape $\vec Q$, we can find tapes $\vec Q_1,\dots,\vec Q_b$ such that
\begin{equation}R(W,\vec Q)\subseteq \bigcup_{i=1}^b R(W_i,\vec Q_i).\label{eq:covering}\end{equation}
Indeed, in the $(W,\vec Q)$-virtual list-assignment process, every vertex $v$ which has a list ``owes'' its list to some $w\in W$, which started a periodic sequence of list-assignments and path-danger level assignments that led to $v$ being assigned a list. We should let $\vec Q_i$ be the sub-tape of $\vec Q$ corresponding to the list-assignments for the vertices which owe their list to the vertices in $W_i$. We remark that for \cref{eq:covering} it is very important that we have tapes recording the precise order in which list-assignments take place (due to some very subtle ways in which path-defective vertices can interact, it does not seem to be possible to fix a global rule describing the order in which vertices should be processed, such that \cref{eq:covering} always holds).

It now suffices to show that for any tape $\vec Q$ and any set $W$ of at most $n^{0.9}$ vertices, we have $|R(W,\vec Q)|\le n^{0.9}(\log n)^{\ell+3}$.

\medskip\noindent\textit{Step 2: Compressing the tape.}
For any $W$, we are interested only in ``maximal'' tapes $\vec Q$, which cannot be extended to continue the $(W,\vec Q)$-virtual list-assignment process any further. Such tapes actually contain a lot of redundant information: in order to specify an outcome of the $(W,\vec Q)$-virtual list-assignment process, for maximal $\vec Q$, we are really only concerned about:
\begin{itemize}
    \item for each $w\in W$, which in-neighbours of $w$ have already been assigned lists at the moment $w$ becomes virtual-defective, and
    \item the ``pathway'' via which each vertex gets assigned a list (specifically, if a vertex $u$ could be assigned a list via two different vertices $v,v'$ with $\on{pd}(v)\ne \on{pd}(v')$, then we need to know which of the two is actually responsible for $u$ receiving its list).
\end{itemize}

So, instead of considering tapes specifying the entire execution of a virtual list-assignment process, we consider ``guides'', which specify the relevant information much more efficiently. Crucially, this will permit us to take a union bound over all guides. To explain what information goes into a guide, we need some more discussion of how $\hat D$ is explored.

As in the proof of \cref{lem:defective-bounding}(B), we recall that $\hat{D}$ is determined by random matchings
$M_{\vec{\gamma}}$ (each in-stub $s\in S_{\vec{\gamma}}^{-}$ has
a partner $s\in S_{\vec{\gamma}}^{+}$, and vice versa). We can reveal these matchings gradually as we explore $\hat D$, revealing the identities of out-stubs according to random orderings $\prec_{\vec \gamma}$.
Recalling \cref{rem:defective-general} (and taking $\delta=n^{-0.08}$), note that with probability at least $1-\exp(-2n^{0.1})$, the orderings $\prec_{\vec \gamma}$ are such that if we run any virtual list-assignment process for at most $n^{0.91}$ steps, there are at most $n^{0.89}$ vertices revealed as in-neighbours of multiple different vertices (call these vertices ``duplicates''). Write $\mc E_{\mr{dup}}$ for the event that this property of the $\prec_{\vec \gamma}$ holds.

Now, for a vertex set $W$, a guide $G=(A,B)$ consists of:
\begin{itemize}
    \item A function $A:W\to  \mc N_{\log n}$, where $\mc N_{\log n}$ is the collection of all subsets of $\{1,\dots,\log n\}$. If $A(w)=I$, this indicates that the in-neighbours of $w$ indexed by $i\in I$ should be assigned lists before $w$ is made virtual-defective  (recall from \cref{D1} that every vertex has at most $\log n$ in-neighbours.).
    \item A set $B$ of at most $n^{0.89}\log n$ pairs $(u,i)\in V(\hat D)\times \{1,\dots,\log n\}$. If $(u,i)\in B$, this indicates that we should \emph{not} assign a list to the $i$-th neighbour of $u$ (or even reveal its identity) when processing $u$ (because we want this neighbour to be assigned a list via some other vertex).
\end{itemize}

Recall that we are interested in showing that virtual list-assignment processes terminate after at most $n^{0.9}(\log n)^{\ell+3}<n^{0.91}$ steps. So, we only need to include $(u,i)$ in $B$ if the $i$-th in-neighbour of $u$ is a duplicate. In practice, for every duplicate $v$, we include $(u,i_{u\to v})\in B$ for all but one of the out-neighbours $u$ of $v$ (where we write $i_{u\to v}$ for the index of $v$ among the out-neighbours of $u$). The pair $(u,i_{u\to v})$ that is \emph{not} included in $B$ indicates the pathway via which we wish $v$ to be assigned a list.

The upshot is that if $\mc E_{\mr{dup}}$ holds, then we will only ever need to consider sets $B$ of size $|B|\le n^{0.89}\log n$, in accordance with the definition of a guide (in addition to there being at most $n^{0.89}$ duplicates, we also recall from \cref{D1} that every vertex has at most $\log n$ out-neighbours).

For a set $W$ of at most $n^{0.9}$ vertices and a guide $G=(A,B)$, we define the \emph{$(W,G)$-guided} list-assignment process to be just like the list-assignment process defined in \cref{def:marking}, except that we skip steps (1), (2b) and (2c) (i.e., there are no colour-defective, cycle-defective or duplicate-defective vertices).
In addition, the guide $G$ is used in the following way:
\begin{itemize}
    \item In step (2d), we do not inspect the $i$-th in-neighbour of a vertex $u\in U$ (to see if we should assign it a list) if $(u,i)\in B$.
    \item In step (2d), when we are considering a vertex $u\in W$, and deciding which in-neighbours of $u\in W$ to inspect first, the in-neighbours indexed by $A(w)$ are always take priority.
    \item We add a new action (2e) (which takes priority over other actions): if there is a vertex $w\in W$ such that its in-neighbours indexed by $A(w)$ have already been assigned lists, then make $w$ defective. (We say $w$ is ``virtual-defective'').
\end{itemize}
Note that since we skip step (1), there are no vertices assigned lists at the start; the first list will always be assigned in step (2e).

Let $\hat R(W,G)$ be the set of vertices which would be assigned a list if we ran the $(W,G)$-guided list-assignment process. If $\mc E_{\mr{dup}}$ holds, and if $|\hat R(W,G)|\le n^{0.9}(\log n)^{\ell+3}$ for each set $W$ of at most $n^{0.9}$ vertices and each guide $G$, then it follows that $|R(W,\vec Q)|\le n^{0.9}(\log n)^{\ell+3}$ for each set $W$ of at most $n^{0.9}$ vertices and each tape $\vec Q$.

So, fix a set $W$ of at most $n^{0.9}$ vertices, and a guide $G$. Our goal will be to prove that with probability at least $1-\exp(-2n^{0.9}\log n)$ we have $|\hat R(W,G)|\le n^{0.9}(\log n)^{\ell+3}$. The desired result will then easily follow from the union bound over at most $n^{n^{0.9}}$ choices of $W$ and  at most \[(2^{\log n})^{n^{0.9}}\cdot (n^2)^{n^{0.89}\log n}\] choices of $G=(A,B)$.

\medskip\noindent\textit{Step 3: Iterative exposure of in-neighbourhoods.}
As previously mentioned, we gradually
reveal information about the $M_{\vec{\gamma}}$ on demand, as we explore $\hat{D}$ via the $(W,G)$-guided list-assignment process. Crucially,
at any moment where we have not yet explored very much of $\vec{D}$,
it is easy to see that the revelations at the next step are ``essentially
uniform''.

Specifically, suppose that so far we have only revealed the in-neighbours
of at most $n^{0.9}(\log n)^{\ell+3}$ vertices, and consider a vertex
$v$ with $\vec{c}(v)=\vec{\gamma}$ and $\deg^{-}(v)=d$, whose in-neighbours
have not yet been revealed. Given all the information revealed so far, let $\hat{\mathcal{N}}$ be the conditional
distribution of the set of outgoing stubs in $S_{\vec{\gamma}}^{+}$
which are matched with the $d$ incoming stubs corresponding to $v$,
and let $\mathcal{N}^*$ be a uniformly random set of $d$ stubs
in $S_{\vec{\gamma}}^{+}$ (sampled with replacement). We claim that
$\on{d}_{\mathrm{TV}}(\hat{\mathcal{N}},\mathcal{N}^*)=O(n^{-0.09})$.
To see this, note that by \cref{D1} we have only revealed the partners of
at most $n^{0.9}(\log n)^{\ell+4}$ stubs. By \cref{T-stub-count,lem:GW-probabilities-big} we have $|S_{\vec{\gamma}}^{+}|\ge n^{1-o(1)}$
and by \cref{D1} we have $d\le\log n$, so 
\[
\on{d}_{\mathrm{TV}}(\hat{\mathcal{N}},\mathcal{N}^*)\le O\left(\frac{n^{0.9}(\log n)^{\ell+4}+d}{|S_{\vec{\gamma}}^{+}|}\right)=O(n^{-0.09}),
\]
as claimed.

By the first part of \cref{T-critical}, the probability that a random stub
in $S_{\vec{\gamma}}^{+}$ corresponds to a vertex with in-degree $g$
is at most
\begin{equation}
\frac{\lambda^{g}e^{-\lambda}}{g!}+n^{-1/5}.\label{eq:outdegree-next}
\end{equation}
Moreover, if $v$ is not defective, then $L(v)$ contains the colours $\gamma_{t_0}$
and $\gamma'=\gamma_{t_0}+1$ (mod 3). By the second part of \cref{T-critical},
the probability that a random stub in $S_{\vec{\gamma}}^{+}$ is $\gamma'$-critical and
corresponds to a vertex with in-degree $g$ is at most 
\begin{equation}
\frac{0.99999}{\lambda}\cdot\frac{\lambda^{g}e^{-\lambda}}{g!}+n^{-1/5}.\label{eq:critical-next}
\end{equation}

To summarise \cref{eq:critical-next,eq:outdegree-next}, and the fact that $\on{d}_{\mathrm{TV}}(\hat{\mathcal{N}},\mathcal{N}^*)=O(n^{-0.09})$: up to some error terms, we can imagine that each of the $d$ in-neighbours of $v$ themselves have independent $\on{Poisson}(\lambda)$ in-degrees, and they are independently $\gamma'$-critical with probability at most $0.99999/\lambda$.

\medskip\noindent\textit{Step 4: Comparison with a branching process.} We now define an abstract branching process
which stochastically dominates the list-assignment process on $\hat D$.
 
Let $\mathcal{E}$ be the distribution of a random variable that is $\lfloor\log n\rfloor$ with probability
$n^{-0.08}$, and zero otherwise. For two probability measures $\mathcal{P},\mathcal{Q}$,
write $\mathcal{P}+\mathcal{Q}$ for the distribution of the sum of
independent random variables distributed as $\mathcal{P}$ and $\mathcal{Q}$.
Note that if $N\sim\on{Poisson}(\lambda)$ and $(X_{i})_{i=1}^{\infty}$
is a sequence of i.i.d. $\on{Bernoulli}(0.99999/\lambda)$ random variables, then $\sum_{i=1}^{N}X_{i}\sim\on{Poisson}(0.99999)$.
So, given the considerations in the previous section, $|\hat R(W,G)|$ is stochastically dominated by the total population
in the following non-homogeneous branching process.
\begin{enumerate}
\item There are $|W|$ roots (``generation zero''), whose numbers of offspring are given by $\deg^{-}(v)$
for $v\in W$.
\item In every generation divisible by $\ell+1$ (apart from generation zero), the offspring distribution
is $\on{Poisson}(\lambda)+\mathcal{E}$.
\item In every generation not divisible by $\ell+1$, the offspring distribution
is $\on{Poisson}(0.99999)+\mathcal{E}$.
\item If the total population ever reaches $n^{0.9}(\log n)^{\ell+3}$,
terminate the process and artificially add $n$ offspring to some
vertex.
\end{enumerate}

\medskip\noindent\textit{Step 5: ``Contracting'' the branching process.} In order
to apply off-the-shelf concentration inequalities from the literature, we compare
the above branching process to a (homogeneous) Galton-Watson process (essentially,
we ``contract'' the process into blocks of $\ell+1$ generations,
each of which can be viewed as a single generation in a Galton--Watson
process). Let $q=\sum_{v\in W}\deg^{-}(v)\le n^{0.9}\log n$ be the total number of individuals at generation 1 (i.e., the number of offspring of the $|W|$ roots)

Let $\mathcal{R}$ be the distribution of the number of individuals at generation $\ell+1$, in a branching process with just one root individual (at generation zero), where generations zero through $\ell-1$ have offspring distribution $\on{Poisson}(0.999999)+\mathcal{E}$, and generation $\ell$ has offspring distribution $\on{Poisson}(\lambda)+\mathcal{E}$.
Then, (except for the artificial termination in (4)), the ``contracted'' branching
process described above corresponds to $q$ independent Galton--Watson
processes with offspring distribution $\mathcal{R}$. Let $Y_{1},\dots,Y_{q}$
be the total populations of $q$  such independent processes.

Note that our ``contraction'' operation reduces the total population by at most a factor of $(\log n)^{\ell}$. Let $Y^{*}=(\log n)^{\ell}(Y_{1}+\dots+Y_{q})$,
so that $|\hat R(W,G)|$ is stochastically dominated by
\[Y^*+n\one\{Y^{*}\ge n^{0.9}(\log n)^{\ell+3}\}.\]

It now suffices to show that
$\Pr[Y^{*}\ge n^{0.9}(\log n)^{\ell+3}]\le \exp(-2n^{0.9}\log n)$. This will follow from a standard Chernoff bound for Galton--Watson processes, after computing some relevant quantities.

\medskip\noindent\textit{Step 6: Computations.} Recall that the cumulant generating function (cgf) $\kappa_X$ of a random variable $X$ is given by $\theta\mapsto \log(\mb E\exp(\theta X))$. The cgf of $\on{Poisson}(\alpha)$ is $\kappa_\alpha:\theta\mapsto \alpha(e^\theta-1)$, and the cgf of $\mc E$ is
\[
\kappa_{\mc E}:\theta\mapsto \log\left(\frac{\exp(\theta \lfloor\log n\rfloor)-1}{n^{0.08}}+1\right).\]
The cgf of $\on{Poisson}(\alpha)+\mathcal{E}$ is then $\kappa_\alpha^*:=\kappa_\alpha+\kappa_{\mc E}$. Iterating the law of total expectation, we can see that the cgf of $\mc R$ is
\[\kappa_{\mc R}:z\mapsto\kappa_{\lambda}^{*}(\kappa_{\beta}^{*}(\kappa_{\beta}^{*}(\dots\kappa_{\beta}^{*}(z)\dots)))\]
(where $\beta=0.99999$ and $\kappa_{\beta}^{*}$ is iterated $\ell$ times).

Now, note that $\kappa_{\alpha}'(0)=\alpha$ so by continuity there is some $\theta_\alpha>0$ such that $\kappa_{\alpha}(\theta)\le1.000001 \alpha \theta$ for $0\le \theta\le\theta_\alpha$. Also, note that if $0\le \theta\le 0.08$ then $\kappa_{\mc E}(\theta)=o(1)$. So, with $\theta^*=\min(\theta_{1/2},\theta_C,0.08)$, we have
\[\kappa_{\mc R}(e^{\theta^*})\le(1.000001)^{\ell+1}C(0.99999)^{\ell}\theta^*+o(1)\le \theta^*/2\]
(recall that $\ell=10^{10}C$ and that $\lambda\le C$).

Let $h:x\mapsto\sup_{\theta\ge0}(\theta x-\kappa_{\mc R}(x))$ be the Legendre transform of $\kappa_{\mc R}$, so the above considerations show that $h(1)\ge\theta^*/2>0$ (note that this does not depend on $n$).
By a Chernoff bound for subcritical Galton--Watson processes (see
for example \cite[Lemma~1.9]{DM10}), for each $i$ we have
\[
\Pr[Y_{i}\ge k]\le\exp(-h(1)k).
\]

Now, for $k=n^{0.9}(\log n)^{3}$ we have
\begin{align*}
\Pr[Y^{*}\ge n^{0.9}{n}(\log n)^{\ell+3}]=\Pr[Y_{1}+\dots+Y_{q}\ge k]&\le \sum_{\substack{k_{1},\dots,k_{q}\in\mb N\\
k_{1}+\dots+k_{q}=k
}
}\prod_{i=1}^{q}\Pr[Y_{i}\ge k_{i}]\\
&\le n^{q}\exp(-h(1)k)\le \exp(-2n^{0.9}\log n),
\end{align*}
as desired.
\end{proof}
\subsection{Putting everything together}

We are finally ready to prove \cref{lem:marking}.
\begin{proof}[Proof of \cref{lem:marking}]
Let $t_{0}=(\log \log n)^2$ and $(D,\vec{c})\sim\mb D_{t_{0}}(n,p)$.
Then, consider the list-assignment process described in \cref{def:marking}, on $D$, with initial
colouring $c_{t_{0}}$ and with $\ell=10^{10}C$. Let $\tau$ be the total number of steps that this process takes, let $U(i)$ be the set of vertices which have been assigned lists after $i$ steps of the list-assignment process, and let $W_{\mr{dup}}(i)$ be the set of vertices that have been marked as duplicate-defective after $i$ steps of the list-assignment process.

Our objective is to prove that whp $|U(\tau)|=o(n)$; our final list-assignment will then satisfy the conditions in \cref{lem:marking} (with $c$ being the restriction of $c_{t_{0}}$ to the vertices not in $U$).

Let $\delta=1/(\log n)^{\ell+10}$, and let $\tau_\delta=\min(i:|U(i)|\ge \delta n)$ be the first time that $\delta n$ vertices have been assigned lists by our list-assignment process. Note that if $\tau_\delta=\infty$ then $|U(\tau)|=o(n)$ and we are done. By \cref{lem:defective-bounding}(B), whp either $\tau_\delta=\infty$ or
\begin{enumerate}
\item $W_{\mr{dup}}(\tau_\delta)\le \delta^2 n (\log n)^3=\delta n/(\log n)^{\ell+7}$.
\end{enumerate}
Also, by \cref{D3} and \cref{T1}, and \cref{lem:defective-bounding}(A), whp:
\begin{enumerate}
\setcounter{enumi}{1}
\item at every point in the list-assignment process, there
are at most $\sqrt n\le \delta n/(\log n)^{\ell+7}$ cycle-defective vertices, and
\item there are at most $0.9999^{t_0}n+n^{3/4}\le \delta n/(\log n)^{\ell+7}$ colour-defective vertices
(which are marked as such at the start of the list-assignment process),
and
\item for every set $W$ of at least $n^{0.9}$
vertices, and every tape $\vec Q$, we have $|R(W,\vec Q)|\le 2|W|(\log n)^{\ell+3}$
\end{enumerate}
But note that (1)--(4) cannot simultaneously hold. Indeed, suppose for the purpose of contradiction that (1)--(4) all hold. Let $W$ be the set of vertices which are colour-defective, cycle-defective or duplicate-defective at time $\tau_\delta$. By (1)--(3)  we have $|W|\le 3\delta n/(\log n)^{\ell+7}$.
So, by
(4), for some suitable tape $\vec Q$ we have
\[|U(\tau_\delta)|\le|R(W,\vec Q)|\le 2\left(\frac{3\delta n}{(\log n)^{\ell+7}}\right)(\log n)^{\ell+3}< \delta n,\]
which contradicts the definition of $\tau_\delta$.
\end{proof}

\section{Majority list-colouring given an acyclic partition}\label{sec:acyclic-partition}

\begin{proof}[Proof of \cref{lem:acyclic-partition}]
For each possible list $L$, let $U_L$ be the set of vertices with list $L$. We can linearly order the vertices of $U_L$ in such a way that all arcs induced by $U_L$ go ``backwards'' in the ordering (i.e., if $v\to u$ is an arc in $D[U_L]$, then $u\prec_L v$ according to our ordering $\prec_L$ on $U_L$). Then, independently for each $L$, we can greedily choose colours $c(v)\in L$ for each $v\in U_L$ (in the order specified by $\prec_L$) as follows:
\begin{itemize}
    \item Recall that all the vertices that are not in $U$ already come with a specified colour.
    \item When it comes time to colour vertex $v$, we have already chosen colours for all out-neighbours $u\in U_L$ of $v$ (by the choice of the ordering $\prec_L$).
    \item For each possible list $L'\ne L$, and each out-neighbour $u\in U_{L'}$ of $v$, imagine that $c(u)$ is coloured with the unique colour in $L\cap L'$ (this is the ``most pessimistic'' assumption).
    \item There is at most one colour appearing on more than half the out-neighbours of $v$ (according to the real and imagined colour choices). So, we can make a choice $c(v)\in L$ which is not this colour.\qedhere
\end{itemize}
\end{proof}

\section{Finishing the proof of \texorpdfstring{\cref{thm:majority-Gnp}}{Theorem~\ref{thm:majority-Gnp}}}\label{sec:finish}
Finally we can complete the proof of \cref{thm:majority-Gnp}. First, it is straightforward to take care of the regime $p< 0.1/n$.
\begin{proof}[Proof of \cref{thm:majority-Gnp} in the case $p<0.1/n$]
Let $p'=1-(1-p)^2\le 0.2/n$. Note that if we remove the directions on the edges of $D\sim \mb D(n,p)$ (antiparallel arcs become a single undirected edge), then we obtain a random graph $G\sim \mb G(n,p')$. It is well-known (see for example \cite[Corollary~5.8]{BolBook}), that such sparse random graphs whp have at most one cycle in every component, and are therefore (properly) 3-colourable. Note that a proper 3-colouring is of $G$ is trivially a majority 3-colouring of $D$.
\end{proof}
Next, we use \cref{lem:marking,lem:acyclic-partition,lem:2degenerate} to handle the case where $p$ has order of magnitude $1/n$.
\begin{proof}[Proof of \cref{thm:majority-Gnp} in the case $0.1/n\le p\le O(1/n)$] Let $D\sim\mb D(n,p)$ for any $0.1/n\le p=O(1/n)$. First, we recall the conclusion of \cref{lem:marking}: whp we can find a subset $U\subseteq V(D)$, an assignment of a colour $c(v)\in \{1,2,3\}$ to each $v\notin U$, and an assignment of a list $L(v)$ to each $v\in U$, such that the following hold.
\begin{enumerate}[{\bfseries{L\arabic{enumi}}}]
    \item\label{L1'} For any completion of our partial colouring $c$, obtained by assigning colours $c(v)\in L(v)$ to the vertices $v\in U$, all $v\notin U$ are majority-coloured with respect to $c$.
    \item\label{L2'} $|U|=o(n)$.
    \item\label{L3'} Each $v\in U$ has list size $|L(v)|=2$ or $|L(v)|=3$.
    \item\label{L4'} Every directed cycle in $D[U]$ contains at least two vertices with list size 3.
    \item\label{L5'} There is $\ell=O(1)$ such that every length-$\ell$ directed path in $D[U]$ has a vertex with list size 3.
\end{enumerate}

Combining \cref{L2'} with \cref{lem:2degenerate}, whp we have the following additional property.
\begin{enumerate}[{\bfseries{L\arabic{enumi}}}]
\setcounter{enumi}{5}
    \item\label{L6'} Every subset $S\subset U$ spans at most $(1+0.1/\ell)|S|$ arcs.
\end{enumerate}
It now suffices (by \cref{L1'}) to prove that if properties \cref{L2',L3',L4',L5',L6'} hold, then there is an assignment of colours $c(v)\in L(v)$ to each $v\in U$, such that every vertex $v\in U$ is majority-coloured. We will prove this via \cref{lem:acyclic-partition} (recalling \cref{L3}, we need to delete a colour from each of the lists of size 3, in such a way that the assumption of \cref{lem:acyclic-partition} holds).

Let $W\subseteq U$ be the set of vertices with list size 3, and let $G$ be the (undirected) graph with vertex set $W$ obtained by putting an edge $ww'$ whenever there is a directed path between $w$ and $w'$ all of whose internal vertices are in $U\setminus W$ (by \cref{L5'}, such a path has length at most $\ell+1$, with at most $\ell$ internal vertices). In particular, we put an edge $ww'$ whenever there is an edge between $w$ and $w'$ in either direction.

We next claim that $G$ has a proper 3-colouring. Indeed, for any subset $S\subseteq V(G)$, if we consider the set $S'\subseteq V(D)$ obtained by adding to $S$ all vertices of $U$ involved in all the paths of $D$ which define the edges of $G[S]$, then $e(D[S'])\ge e(G[S])+|S'\setminus S|$, while $|S'\setminus S|\le \ell \cdot e(G[S'])$ by \cref{L5'}, as per the discussion in the previous paragraph (here we write $e(G)$ for the number of edges or arcs in a graph or digraph $G$).
On the other hand, $e(D[S'])\le (1+0.1/\ell)|S'|$ by \cref{L6'}. So, we have
\[e(G[S])\le e(D[S'])-|S'\setminus S|\le (1+0.1/\ell)|S'|-|S'\setminus S|=|S|+(0.1/\ell)|S'\setminus S|\le 1.1 |S|,\]
meaning that $G[S]$ has average degree at most $2.2$, and therefore has a vertex with degree at most 2. Since this is true for all $S\subseteq V(G)$, there is a proper 3-colouring $c_G:W\to \{1,2,3\}$ of $G$.

Delete the colour $c_G(v)$ from the list $L(v)$, for each defective $v\in W$. After this deletion, each $v\in U$ has a list of size 2; to apply \cref{lem:acyclic-partition} it suffices to prove that for each of the three possible lists $L$, there is no directed cycle among the vertices which have that list. Indeed, \cref{L4'} implies that any cycle in $U$ must contain at least two vertices in $W$, and if we take such a pair at minimal distance along the cycle, then $ww'$ must comprise an edge in $G$. But then $c_G(w)\ne c_G(w')$, so $w$ and $w'$ have different lists.
\end{proof}

It remains to consider the case $p=\omega(1/n)$. Note that the case $p\ge 200\log n/n$ is completely trivial, because then whp all vertices have out-degree at least  $100\log n$, and a uniformly random 3-colouring is a majority colouring whp (as observed in \cite[Theorem~3]{KOSZW17}). So, we only really need to worry about the range where $\omega(1/n)\le p\le  200\log n/n$. We use a much cruder version of the above proof for the case $p= \Theta(1/n)$ (we consider a uniformly random colouring, assign lists based on this colouring, and then apply \cref{lem:acyclic-partition}). We will need the following quantitative variant of \cref{lem:2degenerate}.

\begin{lemma}\label{lem:1.5density}
Let $p=\omega(1/n)$ and $G\sim\mb G(n,p)$. Then whp every vertex subset $S$ with $|S|\le (np)^{-21} n$ spans at most $1.1|S|$ edges.
\end{lemma}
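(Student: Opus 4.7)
The plan is a routine first-moment union bound: upper-bound the expected number of ``bad'' $s$-vertex subsets and show that the total is $o(1)$, then apply Markov's inequality.

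First, I would dispose of the very small cases trivially: for $s\le 3$, one has $\binom{s}{2}\le 3 < \lceil 1.1s\rceil$, so no $s$-vertex subset can span $\lceil 1.1s\rceil$ edges. Thus it suffices to consider $4\le s\le n/(np)^{21}$. For such $s$, the expected number of subsets $S$ of size $s$ spanning at least $m:=\lceil 1.1s\rceil$ edges is at most
\[
\binom{n}{s}\binom{\binom{s}{2}}{m}p^m \;\le\; \left(\frac{en}{s}\right)^s\left(\frac{es^2}{2m}\right)^m p^m \;\le\; \left(\frac{en}{s}\right)^s\left(\frac{esp}{2.2}\right)^{1.1s},
\]
using standard binomial estimates and $m\ge 1.1s$ (the last inequality also uses that $esp/2.2\le 1$, which holds since $sp\le (np)^{-20}=o(1)$ in our range of $s$).

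The key step is then to show the bracketed base is small. Writing $q:=np$ and $t:=s/n\le q^{-21}$, we compute
\[
\frac{en}{s}\cdot\left(\frac{esp}{2.2}\right)^{1.1} \;=\; \frac{e^{2.1}}{2.2^{1.1}}\cdot t^{0.1}q^{1.1} \;\le\; \frac{e^{2.1}}{2.2^{1.1}}\cdot q^{-2.1}\cdot q^{1.1} \;=\; O(q^{-1}) \;=\; o(1),
\]
because $q=np=\omega(1)$. Hence the expected number of bad $s$-vertex subsets is at most $c_n^s$ for some $c_n\to 0$, uniformly in $s$.

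Finally I would sum over $s$: the total expected number of bad subsets is at most $\sum_{s\ge 4}c_n^s = c_n^4/(1-c_n)\to 0$, and Markov's inequality gives the result whp. There is no real obstacle here; the only thing worth being slightly careful about is that the range of allowed $s$ shrinks as $np$ grows (and may be empty when $np$ is large), but this only helps, since the constraint $t\le q^{-21}$ is precisely what drives $t^{0.1}q^{1.1}$ below $1$.
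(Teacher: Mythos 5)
Your proposal is correct and follows essentially the same route as the paper: a first-moment union bound of the form $\binom{n}{s}\binom{O(s^2)}{1.1s}p^{1.1s}$, simplifying the base via the standard $\binom{a}{b}\le(ea/b)^b$ estimate, and using the constraint $s\le(np)^{-21}n$ to show the per-$s$ base is $O(1/(np))=o(1)$ so the geometric sum vanishes. The only cosmetic differences are that you are slightly more careful (using $\binom{\binom s2}{\lceil 1.1s\rceil}$ instead of the paper's $\binom{s^2}{1.1s}$, and disposing of $s\le 3$ explicitly), which affects constants but not the argument.
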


\begin{proof}
Let $\delta=(np)^{-21}$. The probability that there exists some $S$
violating the lemma statement is at most 
\begin{align*}
\sum_{s=1}^{\delta n}\binom{n}{s}\binom{s^{2}}{1.1s}p^{1.1s} & \le\sum_{s=1}^{\delta n}\left(\frac{en}{s}\right)^{s}\left(\frac{es^{2}}{1.1s}\right)^{1.1s}p^{1.1s}=\sum_{s=1}^{\delta n}\left(\frac{e^{2.1}}{1.1^{1.1}}np\right)^{s}(sp)^{0.1s}\\
 & =\sum_{s=1}^{\delta n}\big(O(1)\,np\,(sp)^{0.1}\big)^{s}\le\sum_{s=1}^{\delta n}\left(\frac{O(1)}{np}\right)^{s}=\frac{O(1)}{np}=o(1),
\end{align*}
as desired. (Here we used that when $s\le\delta n$ we have $(sp)^{0.1}\le(np)^{-2}$).
\end{proof}

Now we are finally ready to prove the remaining cases of \cref{thm:majority-Gnp}.

\begin{proof}[Proof of \cref{thm:majority-Gnp} in the case $\omega(1/n)\le p\leq 200\log n/n$]
Consider a uniformly random 3-colouring, and say that a vertex $v$ is \emph{robustly majority-coloured} if at most $\deg^{+}(v)/2-1$ of its out-neighbours have the same colour as $v$. Let $U_0$ be the set of vertices which are not robustly majority-coloured.

For each $m,q$, let $B(m,q)\sim \on{Binomial}(m,q)$. Then, for each vertex $v$, we have
\begin{align*}
    \Pr[v\in U_0]&\le \Pr[\deg^+(v)\le np/2]+\sup_{d\ge np/2}\Pr[v\in U_0\,|\,\deg^+(v)=d]\\
    &=\Pr[B(n-1,p)\le np/2]+\sup_{d\ge np/2}\Pr[B(d,1/3)\ge d/2-1]=e^{-\Omega(np)},
\end{align*}
by a Chernoff bound. By linearity of expectation and Markov's inequality (recalling that $np=\omega(1)$), whp the number of vertices $|U_0|$ that fail to be robustly majority-coloured is at most $e^{-\Omega(np)}n$. It suffices to show that this property, together with the property in \cref{lem:1.5density}, implies the statement of \cref{thm:majority-Gnp} (so, for the rest of the proof, we no longer use the randomness of $D$ or our random 3-colouring).

Starting with $U=U_0$, we iteratively expand the set $U$ as follows. Whenever there is a vertex outside $U$ with more than one out-neighbour in $U$, add that vertex to $U$. We claim that this process cannot continue for more than $2|U_0|$ steps; indeed, after $2|U_0|$ steps we would have $|U|=3|U_0|$ and $e(D[U])\ge 2(2|U_0|)$, which would contradict the property in \cref{lem:1.5density}.

We have now found a set $U$ of only $e^{-\Omega(np)}n$ vertices such that all vertices outside $U$ are robustly majority-coloured and have at most one out-neighbour in $U$ (this means that the vertices outside $U$ will remain majority-coloured no matter how we recolour the vertices in $U$). It now suffices to colour the vertices in $U$. To this end, note that the property in \cref{lem:1.5density} implies that the graph underlying $D[U]$ is 2-degenerate, so has chromatic number at most 3. So, we can partition $D[U]$ into three independent sets, assign to each of these independent sets a list of size 2, and apply \cref{lem:acyclic-partition}. (It would also be easy to find an appropriate colouring with a direct greedy argument).
\end{proof}

\section{Non-constructive majority 2-colouring}\label{sec:majority-2-colouring}

In this section we prove \cref{thm:majority-2-colouring}.
The key ingredient for the
proof of \cref{thm:majority-2-colouring} is the following lemma estimating the first and second
moments of the number of majority bisections.
\begin{lemma}\label{lem:moments}
Let $D\sim\mb D(n,p_{n})$, where $n p_{n}(1-p_n)\to\infty$, and let $X$ be
the number of majority bisections in $D$. Then
\begin{enumerate}
\item $\mb E X\ge e^{-o(n)}$,
\item $\mb E X^{2}\le e^{o(n)}$.
\end{enumerate}
\end{lemma}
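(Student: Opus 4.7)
The plan is to exploit a feature of $\mb D(n,p)$ that has no counterpart for $\mb G(n,p)$: the out-edges at distinct vertices involve disjoint sets of potential arcs and are therefore independent. Since the event ``$v$ is majority-coloured with respect to a given bisection $\pi$'' depends only on $v$'s out-edges, for every fixed $\pi$ these events are mutually independent over $v$. This factorisation makes both moments tractable and reduces the second moment to a one-variable large-deviations problem that is considerably cleaner than the undirected analogue in \cite{DGZ,GL18}.

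For (1), by symmetry $\mb E X = \binom{n}{\lfloor n/2\rfloor}\pi$, where $\pi$ is the probability that a single fixed bisection is majority. By the factorisation above, $\pi = \prod_v \pi_v$, where each $\pi_v$ takes the form $\Pr[\mr{Bin}(n/2-1,p)\le \mr{Bin}(n/2,p)]$. The difference of these two binomials has mean $O(p)$ and standard deviation of order $\sqrt{np(1-p)}\to\infty$, so a Berry--Esseen estimate gives $\pi_v = 1/2 + o(1)$ uniformly in $v$. Multiplying and using $\binom{n}{\lfloor n/2\rfloor}=\Theta(2^n/\sqrt n)$ yields $\mb E X = e^{o(n)}$, and in particular $\mb E X\ge e^{-o(n)}$.

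For (2), I would classify ordered pairs $(\pi_1,\pi_2)$ by the overlap $a=|A_1\cap A_2|$; the bisection constraint forces $|B_1\cap B_2|=a$ and $|A_1\cap B_2|=|B_1\cap A_2|=n/2-a$. Writing $\beta=2a/n\in[0,1]$ and letting $H$ denote the binary entropy, the number of such pairs is $\binom{n}{n/2}\binom{n/2}{a}^2 = \exp(n\log 2 + nH(\beta) + O(\log n))$. For a fixed pair of overlap $\beta$, the joint event that both $\pi_1$ and $\pi_2$ are majority again factors over $v$, and the per-vertex probability depends only on which of the four intersection quadrants contains $v$. Each per-vertex event is a pair of linear inequalities in the four independent binomial out-neighbour counts, and a bivariate central limit theorem (quantitative via Berry--Esseen, valid because $np(1-p)\to\infty$) gives convergence to a standard bivariate Gaussian orthant probability, equal to $p_{\mr s}(\beta):=1/4 + \arcsin(2\beta-1)/(2\pi) + o(1)$ in the two ``same-side'' quadrants ($A_1\cap A_2$ and $B_1\cap B_2$) and to $p_{\mr o}(\beta):=1/2 - p_{\mr s}(\beta) + o(1)$ in the other two.

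Combining everything, the contribution of overlap $\beta$ to $\mb E X^2$ is $\exp(nf(\beta) + o(n))$, where
\begin{equation*}
f(\beta) = \log 2 + H(\beta) + \beta\log p_{\mr s}(\beta) + (1-\beta)\log p_{\mr o}(\beta).
\end{equation*}
A direct computation gives $f(0) = f(1/2) = f(1) = 0$, and the symmetry $\beta\mapsto 1-\beta$ (which swaps $p_{\mr s}$ and $p_{\mr o}$) reduces matters to showing $f(\beta)\le 0$ on $[1/2,1]$. I expect this inequality to be the main technical obstacle: it is a one-variable calculus problem pitting the binary entropy against an arcsine-based Gaussian orthant probability, and because $f$ vanishes at both endpoints of the interval one cannot appeal to a naive concavity argument (instead I anticipate analysing $f''$ directly, or reducing to a known entropy inequality). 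Once the inequality is in hand, summing over the $O(n)$ values of $a$ costs only a polynomial factor and yields $\mb E X^2 \le e^{o(n)}$.
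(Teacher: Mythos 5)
Your proposal takes essentially the same route as the paper: factor both moments over vertices using the mutual independence of out-neighbourhoods, classify pairs of bisections by overlap, compute the per-vertex joint probability via a bivariate Gaussian approximation, and reduce the second-moment bound to a one-variable inequality for an exponential rate function. Your Sheppard/orthant formulation $p_{\mathrm s}(\beta)=1/4+\arcsin(2\beta-1)/(2\pi)$ is algebraically identical to the paper's $\frac1\pi\arctan\sqrt{\beta/(1-\beta)}$ (via $\arcsin(2\beta-1)=2\arcsin\sqrt\beta-\pi/2$ and $\arcsin\sqrt\beta=\arctan\sqrt{\beta/(1-\beta)}$), and your rate function $f$ matches the paper's \cref{lem:calculus-inequality} exactly after collecting the $\log\pi$ and $\log 2$ terms.

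The piece you explicitly defer --- showing $f(\beta)\le 0$ on $(0,1)$ with equality only at $\beta=1/2$ --- is indeed where most of the work lives, and you are right that no naive concavity argument is available because $f$ also vanishes at the endpoints. The paper proves it as \cref{lem:calculus-inequality} (in \cref{sec:calculus-inequality}) by computer assistance: substitute $\beta=\sin^2 x$ (a close cousin of your arcsine reparametrization), Taylor-expand $g(x)=f(\sin^2 x)$ in small neighbourhoods of $x\in\{0,\pi/4,\pi/2\}$ to beat the vanishing at those points, and dispose of the remaining compact region by a fine mesh evaluation plus a uniform derivative bound and the mean value theorem. Your anticipated route (``analysing $f''$ directly'') is not what the paper does, and I would expect it to be quite painful near the endpoints where higher-order cancellations occur.

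One further point to be careful about, which your write-up glosses over: the Berry--Esseen error in the bivariate CLT is of order $1/\sqrt{\beta(1-\beta)np(1-p)}$, which is \emph{not} $o(1)$ uniformly in $\beta$ --- it blows up for overlaps close to $0$ or $1$. The paper handles this by only applying the Gaussian estimate for $\beta\in[\varepsilon,1-\varepsilon]$ with $\varepsilon=\varepsilon_n\to 0$ sufficiently slowly, and bounding the near-degenerate overlaps crudely by $\Pr[\bigcap_v\mathcal E_v^c]=(1/2+o(1))^n$ from part~(1); those tails still contribute only $e^{o(n)}$ because $\binom{n/2}{a}^2=e^{o(n)}$ when $a\le\varepsilon n/2$. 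Without some such treatment of the extreme overlaps, the sum over $a$ is not controlled.
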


The proof of \cref{lem:moments} is essentially a large-deviations calculation, similar
to the calculations in \cite{DGZ}. We defer this proof until \cref{subsec:moments}.

By the Paley-Zygmund inequality, it follows from \cref{lem:moments} that $\Pr[X\ne0]\ge(\mb E X)^{2}/\mb E X^{2}=e^{-o(n)}$.
That is to say, it is not exponentially unlikely that $D$ has a majority
bisection. In order to deduce from this that $D$ has an almost-majority
2-colouring whp, we adapt a concentration trick that seems to have been first used by Frieze~\cite{Fri90}, in the same
way as \cite{DGZ}. Namely, we define a second random variable $Z$ measuring
(in some appropriate sense) how close to a majority 2-colouring we
can obtain, observe that $Z$ is concentrated around its mean, and
deduce that $\mb E Z=o(n)$ (otherwise it would not be possible to have
$\Pr[Z\ne0]\ge e^{-o(n)}$). We will take our random variable $Z$
to be the minimum \emph{defect} of our random colouring, defined as
follows.
\begin{definition}
Given a 2-colouring $c:V(G)\to\{1,2\}$ and a vertex $v\in V(G)$, we define the \emph{defect
}
\[
\operatorname{def}(v;c)=\min\big(|\{w\in N^{+}(v):c(w)=c(v)\}|-|\{w\in N^{+}(v):c(w)\ne c(v)\}|,\;0\big).
\]
In words, the defect is zero if $v$ is majority-coloured, and otherwise
the defect is the number of same-coloured out-neighbours of $v$ minus
the number of oppositely-coloured out-neighbours of $v$. Then, the
defect of the entire colouring $c$ is defined as 
\[
\operatorname{def}(c)=\sum_{v\in V(G)}\operatorname{def}(v;c).
\]
\end{definition}

We need the fact that the minimum defect of a random digraph is tightly
concentrated, as follows.
\begin{lemma}\label{lem:defect-concentration}
Let $D\sim\mb D(n,d/n)$ with $d^*:=\min(d,n-d)\ge1$, and let $Z$ be the
minimum defect among all bisections of $D$. Then for large $n$ and any $\varepsilon\ge0$
we have 
\[
\Pr[|Z-\mb E Z|\ge\varepsilon\sqrt{d^*}n]\le\exp(-\varepsilon^{2}n/20).
\]
\end{lemma}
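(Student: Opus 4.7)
The plan is to prove the concentration via an edge-exposure Doob martingale for $Z$ combined with Freedman's martingale inequality. Plain Azuma--Hoeffding is too weak here: it would give an exponent of order $\varepsilon^2 d^*$ (independent of $n$), since the martingale has $n(n-1)$ increments each bounded by $1$. Freedman sharpens this by exploiting that each arc is present only with probability $p = d/n$, so each increment has conditional variance bounded by $p(1-p) \le d^*/n$, not its worst case $1$.

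Encode $D$ by $N = n(n-1)$ i.i.d.\ $\on{Bernoulli}(p)$ indicators $\xi_1,\dots,\xi_N$, one per ordered pair of distinct vertices. The crux is that $Z$ is $1$-Lipschitz in the $\xi_i$. Indeed, for any fixed bisection $c$, toggling one arc $(u,v)$ affects only the summand $\on{def}(u;c)$ (since $N^+(w)$ is unchanged for $w\neq u$), and $\on{def}(u;c)$ is the clipping $\min(\,\cdot\,,0)$ of the signed count $|\{w\in N^+(u):c(w)=c(u)\}| - |\{w\in N^+(u):c(w)\neq c(u)\}|$, which shifts by exactly $\pm 1$. Since $x\mapsto\min(x,0)$ is $1$-Lipschitz and taking a minimum over bisections preserves Lipschitzness, $|Z(D)-Z(D')|\le 1$ whenever $D,D'$ differ in a single arc.

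Now let $\mc F_i = \sigma(\xi_1,\dots,\xi_i)$ and $D_i = \mb E[Z\mid\mc F_i] - \mb E[Z\mid\mc F_{i-1}]$. The Lipschitz property gives $|D_i|\le 1$ almost surely; moreover, conditional on $\mc F_{i-1}$, $D_i$ is a zero-mean function of the single $\on{Bernoulli}(p)$ variable $\xi_i$ taking two values at most $1$ apart, from which an elementary calculation gives $\mb E[D_i^2\mid\mc F_{i-1}] \le p(1-p)$. Summing across the $N$ coordinates, the predictable quadratic variation is at most $Np(1-p) = n(n-1)(d/n)(1-d/n) \le nd^*$, using $d(n-d)/n \le \min(d,n-d) = d^*$.

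Applying Freedman's inequality with $V = nd^*$ and $K = 1$ then yields, for all $t\ge 0$,
\[
\Pr\!\left[\,|Z - \mb E Z|\ge t\,\right] \le 2\exp\!\left(-\frac{t^2}{2(nd^* + t/3)}\right).
\]
Substituting $t = \varepsilon\sqrt{d^*}\,n$, the exponent simplifies to $\varepsilon^2 n/\big(2 + 2\varepsilon/(3\sqrt{d^*})\big)$, which exceeds $\varepsilon^2 n/20$ whenever $\varepsilon\le 27\sqrt{d^*}$; the prefactor of $2$ is absorbed by taking $n$ large (matching the lemma's ``for large $n$'' hypothesis). I expect the main obstacle to be the complementary range $\varepsilon > 27\sqrt{d^*}$, where the target bound is stronger than the sub-Poisson tail of Freedman. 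Here I would either invoke the Bennett refinement of the martingale inequality for moderately large $\varepsilon$, or observe that once $\varepsilon$ is large enough the threshold $\varepsilon\sqrt{d^*}n$ exceeds a deterministic upper bound on $|Z - \mb E Z|$ (since $|Z|$ is bounded by the number of arcs of $D$, which concentrates sharply around $(n-1)d$ by Chernoff), rendering the inequality trivially true.
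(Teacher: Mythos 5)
Your proof takes essentially the same approach as the paper's, which also applies a variance-aware bounded-difference inequality (a Bernstein/Freedman-type bound cited from an external source) to the edge-exposure filtration, deduces the variance proxy $\sum_i c_i^2 p(1-p)=O(nd^*)$ from the per-arc Lipschitz constant, and substitutes $t=\varepsilon\sqrt{d^*}n$; your Lipschitz constant of $1$ is in fact sharper than the paper's stated constant of $2$. Your worry about the regime $\varepsilon \gg \sqrt{d^*}$ is real but applies equally to the paper's own proof (which likewise only yields the claimed exponent for $\varepsilon = O(\sqrt{d^*})$), and is immaterial because the lemma is only invoked with $\varepsilon=o(1)$.
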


\begin{proof}
If any edge is added or removed from $D$, then $Z$ changes by at
most 2. So, a bounded-difference inequality such as \cite[Theorem~2.11]{Kwa20} shows
that 
\[
\Pr[|Z-\mb E Z|\ge t]\le\exp\left(-\frac{t^{2}}{16n(n-1)\min(p,1-p)+4t}\right)
\]
for any $t\ge0$. The desired result follows.
\end{proof}
We also need the fact that in a random graph, there is no bisection in which many vertices have small positive defect.
\begin{lemma}\label{lem:few-low-defect}
Consider $d_n$ and $\varepsilon_n$ such that $\varepsilon_n\to0$ and $d^*_n:=\min(d_n,n-d_n)\to\infty$. Then whp $D\sim\mb D(n,d_n/n)$ has the property that for every bisection $c:V(D)\to\{1,2\}$,
there are at most $o(n)$ vertices $v$ with $0<\operatorname{def}(v;c)\le \varepsilon_n \sqrt{d^*_n}$.
\end{lemma}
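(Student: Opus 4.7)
The plan is to prove the lemma by a first-moment union bound over all $\binom{n}{\lceil n/2\rceil}\le 2^n$ bisections. For a fixed bisection $c:V(D)\to\{1,2\}$, let $X_v:=\one[0<\operatorname{def}(v;c)\le\varepsilon_n\sqrt{d^*_n}]$ and $N(c):=\sum_v X_v$. The crucial structural observation is that, once $c$ is fixed, $X_v$ is a function of the out-arcs emanating from $v$ alone, and these edge sets are disjoint across different vertices in the $\mb D(n,p)$ model; hence the variables $\{X_v\}_{v\in V(D)}$ are mutually independent.

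To bound $\mb E X_v$, assume $c(v)=1$ and write $S:=|\{w\in N^+(v):c(w)=1\}|\sim\on{Binomial}(n_1-1,p)$ and $O:=|\{w\in N^+(v):c(w)=2\}|\sim\on{Binomial}(n_2,p)$, with $n_1+n_2=n$ and $p=d_n/n$. These are independent, and $\on{Var}(S-O)=(n-1)p(1-p)=\Theta(d^*_n)$. Since $d^*_n\to\infty$, the local central limit theorem gives a uniform anti-concentration estimate
\[
\sup_{k\in\mb Z}\Pr[S-O=k]=O(1/\sqrt{d^*_n}).
\]
Summing over the at most $\varepsilon_n\sqrt{d^*_n}+1$ integers in $(0,\varepsilon_n\sqrt{d^*_n}]$ yields $\Pr[X_v=1]\le C\varepsilon_n$ for some absolute constant $C$.

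Setting $m:=\lceil Kn/\log(1/\varepsilon_n)\rceil$ for a sufficiently large constant $K$ (so $m=o(n)$ since $\varepsilon_n\to 0$), independence of the $X_v$ together with the union bound over $m$-subsets gives
\[
\Pr[N(c)\ge m]\le\binom{n}{m}(C\varepsilon_n)^m\le\left(\frac{eCn\varepsilon_n}{m}\right)^m\le 2^{-2n}
\]
for large $n$: the estimate $\log(m/(eCn\varepsilon_n))=(1-o(1))\log(1/\varepsilon_n)$ makes the exponent $\Theta(n)$ with implicit constant tunable via $K$. A union bound over the at most $2^n$ bisections yields $\Pr[\exists\,c:N(c)\ge m]\le 2^{-n}\to 0$, which suffices since $m=o(n)$.

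The only genuinely delicate ingredient is the anti-concentration bound $\sup_k\Pr[S-O=k]=O(1/\sqrt{d^*_n})$. For $d_n$ bounded away from $0$ and $n$ this is the textbook local CLT for binomial distributions; in the regimes $d_n\to 0$ or $d_n/n\to 1$ one has to verify that the local CLT is uniform in the parameter, which can be done either by conditioning on $O$ and invoking the standard estimate $\sup_k\Pr[S=k]=O(1/\sqrt{(n_1-1)p(1-p)})=O(1/\sqrt{d^*_n})$, or via a direct characteristic-function/Fourier argument. Everything else is entirely routine.
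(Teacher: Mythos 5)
Your proposal is correct and follows essentially the same route as the paper's proof: fix a bisection, observe that the defect indicators are independent across vertices because they depend only on disjoint sets of out-arcs, use anticoncentration of a binomial difference (the paper does this by conditioning on one of the two out-degrees and applying single-variable anticoncentration, which is the same fallback you mention) to get $\Pr[X_v=1]=O(\varepsilon_n)$, and then combine a binomial tail bound with a union bound over the $\le 2^n$ bisections, tuning the threshold $m=o(n)$ so the per-bisection failure probability is $o(2^{-n})$.
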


\begin{proof}
Fix a bisection $c:V(G)\to\{1,2\}$. For $i\in\{1,2\}$, let $\deg_{i}(v)$
be the number of out-neighbours of $v$ with $c(v)=i$. Note that we can
only have $0<\operatorname{def}(v;c)\le o(\sqrt{d^*_n})$ if $|\deg_{1}(v)-\deg_{2}(v)|\le o(\sqrt{d^*_n})$.
We will show that with probability $1-o(2^{-n})$, at most $o(n)$ vertices
have $|\deg_{1}(v)-\deg_{2}(v)|\le o(\sqrt{d^*_n})$; the desired result
will follow from a union bound over bisections.

To this end, note that all $2n$ random variables of the form $\deg_{i}(v)$
are independent. Note that $\deg_{2}(v)$ has a $\operatorname{Binomial}(n/2,d_n/n)$
distribution\footnote{Strictly speaking, if $n$ is odd then the distribution is $\operatorname{Binomial}(\lfloor n/2\rfloor,d_n/n)$ or $\operatorname{Binomial}(\lceil n/2\rceil,d_n/n)$ (i.e., there are some rounding considerations). This does not materially affect the rest of the proof.},
so by direct calculation or an anticoncentration inequality
such as \cite[Lemma~8.1]{CV08}, we have $\max_{x\in\mb N}\Pr[\deg_{2}(v)=x]=O(1/\sqrt{d^*_n})$.
It follows that
\begin{align*}
\Pr[|\deg_{1}(v)-\deg_{2}(v)|\le\varepsilon\sqrt{d^*_n}]&=\mb E\left[\Pr\left[\deg_{2}(v)\in[\deg_{1}(v)-\varepsilon\sqrt{d^*_n},\deg_{1}(v)+\varepsilon\sqrt{d^*_n}]\,\middle|\,\deg_{1}(v)\right]\right]\\
&=O(\varepsilon_n)\le\sqrt{\varepsilon_n}.
\end{align*}
So, with $\delta=10/\log(1/\varepsilon_n)=o(1)$, we have
\begin{align*}
\Pr[|\deg_{1}(v)-\deg_{2}(v)|\le\varepsilon_n\sqrt{d^*_n}\text{ for }\delta n\text{ different }v]&\le\binom n{\delta n}(\sqrt \varepsilon_n)^{\delta n}\\
&\le \exp\Big(\big(\delta \log(e/\delta)-(1/2)\delta\log(1/\varepsilon_n)\big)n\Big)=o(2^{-n}).
\end{align*}
The desired result follows.
\end{proof}
\begin{proof}
[Proof of \cref{thm:majority-2-colouring}]
Let $D\sim\mb D(n,p_n)$ with $np_n(1-p_n)\to \infty$, and let $d^*_n=\min(np_n,n-n p_n)$ (so $d^*_n\to\infty$). We wish to prove that $D$ has an $o(1)$-almost-majority bisection whp. 

Let $Z$ be the minimum defect among all bisections
of $D$. Using the Paley-Zygmund inequality we have 
\[
\Pr[Z=0]=\Pr[X\ne0]\ge\frac{(\mb E X)^{2}}{\mb E X^{2}}\ge e^{-o(n)}.
\]
On the other hand we have 
\[
\Pr[Z=0]\le\Pr[|Z-\mb E Z|\ge\mb E Z].
\]
Combining these inequalities with \cref{lem:defect-concentration}, we see that $\mb E Z=o(\sqrt{d^*_n}n)$,
so using \cref{lem:defect-concentration} again, we see that whp $Z=o(\sqrt{d^*_n}n)$. That is to say,
whp there is a bisection with defect $o(\sqrt{d^*_n}n)$. By \cref{lem:few-low-defect}, whp there
are at most $o(n)$ vertices responsible for this defect, as desired.
\end{proof}

\subsection{Moment calculations}\label{subsec:moments}
In this subsection we prove \cref{lem:moments}. First, we need to understand the probability that a particular vertex is majority-coloured, or that a pair of vertices are both majority-coloured. In a directed graph $D$ we define the \emph{overlap} between two colourings $c,c':V(D)\to\{1,2\}$
 to be the fraction of vertices $v\in V(D)$ for which $c(v)=c'(v)$. 
\begin{lemma}\label{lem:moments-input}
Fix a constant $\varepsilon>0$. Let $D\sim\mb D(n,p_{n})$ with $np_n(1-p_n)\to\infty$, and
fix a pair of bisections $c,c':V(D)\to\{1,2\}$ with overlap $\alpha \in[\varepsilon,1-\varepsilon]$.
For a vertex $v$, let $\mathcal{E}_{v}$ and $\mathcal{E}_{v}'$
be the events that $v$ is majority-coloured with respect to $c$
and with respect to $c'$. Then, for every vertex $v$:
\begin{enumerate}
\item $\displaystyle \Pr[\mathcal{E}_{v}]=1/2+o(1)$;

\item ${\displaystyle \Pr[\mathcal{E}_{v}\cap\mathcal{E}_{v}']=\frac{1}{\pi}\arctan\left(\sqrt{\frac{\alpha}{1-\alpha}}\right) + o(1)}$ if $c(v)=c'(v)$;
\item \vspace{5pt}${\displaystyle \Pr[\mathcal{E}_{v}\cap\mathcal{E}_{v}']=\frac{1}{\pi}\arctan\left(\sqrt{\frac{1-\alpha}{\alpha}}\right) + o(1)}$ if $c(v)\ne c'(v)$.
\end{enumerate}

\end{lemma}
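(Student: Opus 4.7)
\medskip
\noindent\textbf{Proof proposal.}
The plan is a direct bivariate central limit theorem computation, decomposing the out-neighbours of $v$ according to both colourings. Partition $V(D)\setminus \{v\}$ into the four classes
\begin{align*}
    A &= \{w:\,c(w)=c(v),\,c'(w)=c'(v)\},\quad
    B = \{w:\,c(w)=c(v),\,c'(w)\ne c'(v)\},\\
    C &= \{w:\,c(w)\ne c(v),\,c'(w)=c'(v)\},\quad
    D = \{w:\,c(w)\ne c(v),\,c'(w)\ne c'(v)\}.
\end{align*}
Since $c$ and $c'$ are bisections, the four row/column sum constraints force $|A|=|D|$ and $|B|=|C|$, and a short computation using the overlap $\alpha$ gives $|A|=|D|=\tfrac{\alpha}{2}n+O(1)$, $|B|=|C|=\tfrac{1-\alpha}{2}n+O(1)$ in the case $c(v)=c'(v)$, and the roles of $\{A,D\}$ and $\{B,C\}$ swap in the case $c(v)\ne c'(v)$. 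Writing $N_S$ for the number of out-neighbours of $v$ in the class $S\in\{A,B,C,D\}$, these four random variables are \emph{independent} binomials, and the events can be rewritten as $\mc E_v=\{N_A+N_B\le N_C+N_D\}$, together with either $\mc E_v'=\{N_A+N_C\le N_B+N_D\}$ (when $c(v)=c'(v)$) or $\mc E_v'=\{N_B+N_D\le N_A+N_C\}$ (when $c(v)\ne c'(v)$).

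Next I would introduce $X=(N_A+N_B)-(N_C+N_D)$ and $Y=(N_A+N_C)-(N_B+N_D)$. Setting $\sigma_n^2=np_n(1-p_n)\to\infty$, one checks by direct calculation that $\mb E X,\mb E Y=O(p_n)=o(\sigma_n)$, that $\on{Var}(X)=\on{Var}(Y)=(1+o(1))\sigma_n^2$, and that $\on{Cov}(X,Y)=(1+o(1))(2\alpha-1)\sigma_n^2$ in the case $c(v)=c'(v)$, or $(1+o(1))(1-2\alpha)\sigma_n^2$ in the case $c(v)\ne c'(v)$. Since $\alpha\in[\varepsilon,1-\varepsilon]$ the covariance matrix of $(X/\sigma_n,Y/\sigma_n)$ is uniformly non-degenerate, so the Cram\'er--Wold device plus the univariate Lindeberg CLT gives joint convergence to a centred bivariate Gaussian with correlation $\rho=\pm(2\alpha-1)$. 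Convergence in distribution upgrades to convergence of $\Pr[X\le 0,Y\le 0]$ because the Gaussian limit puts no mass on the boundary of the orthant; alternatively, a Berry--Esseen bound for sums of independent Bernoullis gives an explicit $O(\sigma_n^{-1})$ rate. Part (1) follows from the one-dimensional version, $\Pr[X\le 0]=1/2+o(1)$.

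For parts (2) and (3), plug $\rho$ into the classical orthant formula $\Pr[Z_1\le 0,Z_2\le 0]=\tfrac14+\tfrac{1}{2\pi}\arcsin\rho$ for a standard bivariate Gaussian. When $c(v)=c'(v)$, this gives $\tfrac14+\tfrac{1}{2\pi}\arcsin(2\alpha-1)$, and setting $\theta=\arctan\sqrt{\alpha/(1-\alpha)}$ one has $\sin\theta=\sqrt{\alpha}$, $\cos\theta=\sqrt{1-\alpha}$, so $\sin(2\theta-\pi/2)=-\cos(2\theta)=2\alpha-1$ and therefore $\theta=\pi/4+\tfrac12\arcsin(2\alpha-1)$, i.e.\ the formula equals $\tfrac{1}{\pi}\arctan\sqrt{\alpha/(1-\alpha)}$. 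When $c(v)\ne c'(v)$, the same identity with $\alpha$ replaced by $1-\alpha$ yields the formula in (3). The sanity checks $\alpha=1$ (identical bisections), $\alpha=1/2$ (independent events), and $\alpha=0$ (complementary bisections) all pass.

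The only real obstacle is converting the bivariate CLT into the stated $o(1)$ convergence for the specific orthant probability; this is where the assumption $np_n(1-p_n)\to\infty$ is essential (to make the Lindeberg ratio vanish and the covariance non-singular on the right scale), and the assumption $\alpha\in[\varepsilon,1-\varepsilon]$ is used to keep $|\rho|$ bounded away from $1$ so that the limiting Gaussian is genuinely two-dimensional. Everything else is routine.
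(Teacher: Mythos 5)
Your proposal is correct and follows essentially the same strategy as the paper: decompose the out-neighbourhood of $v$ into the four classes according to $(c,c')$, set up a bivariate CLT for the two degree-difference statistics, and evaluate the resulting Gaussian orthant probability. The only substantive difference is the final step: you normalize both coordinates by $\sigma_n$, obtain a correlated standard bivariate Gaussian with $\rho=2\alpha-1$, and invoke the classical orthant formula $\Pr[Z_1\le 0,\,Z_2\le 0]=\tfrac14+\tfrac{1}{2\pi}\arcsin\rho$ together with the trigonometric identity $\tfrac14+\tfrac{1}{2\pi}\arcsin(2\alpha-1)=\tfrac1\pi\arctan\sqrt{\alpha/(1-\alpha)}$. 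The paper instead chooses anisotropic normalizations $\sqrt{\alpha n p_n(1-p_n)}$ and $\sqrt{(1-\alpha)n p_n(1-p_n)}$ to make the two Gaussian coordinates independent and then reads off the answer from rotational invariance, which avoids citing the orthant formula entirely. Both routes are valid; yours is perhaps slightly more ``textbook'' while the paper's is self-contained. One small note: you should state up front (as you do parenthetically) that you want the quantitative Berry--Esseen version rather than the qualitative Cram\'er--Wold/Lindeberg argument, since Lemma~\ref{lem:moments} later needs this estimate uniformly over all pairs $(c,c')$ with overlap in $[\varepsilon,1-\varepsilon]$, and also over a vanishing $\varepsilon$; a pointwise distributional limit does not immediately give that uniformity.
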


\begin{proof}
Fix a vertex $v$. First, (1) is easy to prove by a direct computation,
but as a warm-up for (2) and (3) we give a proof using Gaussian approximation.
For $i\in\{1,2\}$, let $\deg_{i}(v)$ be the number of neighbours
of $v$ with $c(v)=i$. Let 
\[
X=\frac{\deg_{1}(v)-\deg_{2}(v)}{\sqrt{np_n(1-p_n)}},
\]
and consider a Gaussian random variable $Z\sim\mathcal{N}(0,1)$.
By the Berry--Esseen theorem \cite{Ber41,Ess42}, for any interval $I\subseteq\mb R$
we have 
\[
\Pr[X\in I]=\Pr[Z\in I]+O\left(\frac{1}{\sqrt{np_{n}(1-p_{n})}}\right).
\]
Note that $\mathcal{E}_{v}$ is precisely the event that $X\le0$
(if $c(v)=1$) or that $X\ge0$ (if $c(v)=2$). So, (1) follows from the approximation of $X$ by $Z$ and
the symmetry of $\mathcal{N}(0,1)$.

We next prove (2) in the case that $c(v)=c'(v)=1$ (then (3) and the other case of (2) follow from a very similar calculation). For $i,j\in\{1,2\}$, let $V_{ij}$ be the
set of vertices $w$ with $(c(w),c'(w))=(i,j)$, and note that 
\[
|V_{ij}|=\begin{cases}
\alpha n/2 & \text{if }i=j,\\
(1-\alpha)n/2 & \text{if }i\ne j.
\end{cases}
\]
Let $\deg_{ij}(v)=N(v)\cap V_{ij}$ be the number of neighbours that
$v$ has in $V_{ij}$, so $\mathcal{E}_{v}\cap\mathcal{E}_{v}'$ is
the event that 
\[
\deg_{11}(v)+\deg_{12}(v)\le\deg_{21}(v)+\deg_{22}(v)\text{ and }\deg_{11}(v)+\deg_{21}(v)\le\deg_{12}(v)+\deg_{22}(v),
\]
or equivalently that
\[
\deg_{22}(v)-\deg_{11}(v)\ge|\deg_{21}(v)-\deg_{12}(v)|.
\]

Let
\[
\vec{X}=\left(\frac{\deg_{22}(v)-\deg_{11}(v)}{\sqrt{\alpha np_{n}(1-p_{n})}},\;\frac{\deg_{21}(v)-\deg_{12}(v)}{\sqrt{(1-\alpha)np_{n}(1-p_{n})}}\right)\in\mb R^{2},
\]
and consider a bivariate standard Gaussian random vector $\vec{Z}\in\mb R^{2}$.
By a multivariate
Berry--Esseen theorem (see for example \cite{Rai19}), for any convex $U\subseteq\mb R^{2}$
we have 
\[
\Pr[\vec{X}\in U]=\Pr[\vec{Z}\in U]+O\left(\frac{1}{\sqrt{\alpha(1-\alpha)np_{n}(1-p_{n})}}\right).
\]
In particular
\begin{align*}
\Pr[\mathcal{E}_{v}\cap\mathcal{E}_{v}'] & =\Pr[\sqrt{\alpha}Z_{1}\ge\sqrt{1-\alpha}|Z_{2}|]+o(1).
\end{align*}

Now, for $\vec{x}\in\mb R^{2}$, let $\angle\vec{x}\in(-\pi,\pi]$ be
the angle of $\vec{x}$ when expressed in polar coordinates. By the
rotational invariance of $\vec{Z}$, we have
\begin{align*}
\Pr[\sqrt{\alpha}Z_{1}\ge\sqrt{1-\alpha}|Z_{2}|] & =\Pr\left[|\angle\vec{Z}|\le\arctan\left(\sqrt{\frac{\alpha}{1-\alpha}}\right)\right]\\
 & =\frac{2\arctan\left(\sqrt{\frac{\alpha}{1-\alpha}}\right)}{2\pi}.
\end{align*}
The desired conclusion follows.
\end{proof}
Next, we need a basic numerical inequality to understand the contribution
to $\mb E X^{2}$ from the various overlaps $\alpha$.
\begin{lemma}\label{lem:calculus-inequality}
Define the function $f:(0,1)\to\mb R$ by
\[
f(\alpha)=\alpha\left(\log\left(\arctan\left(\sqrt{\frac{\alpha}{1-\alpha}}\right)\right)-\log\alpha\right)+(1-\alpha)\left(\log\left(\arctan\left(\sqrt{\frac{1-\alpha}{\alpha}}\right)\right)-\log(1-\alpha)\right)+\log\left(\frac2\pi\right).
\]
Then $f(\alpha)\le0$ for all $\alpha\in(0,1)$, and $f(\alpha)=0$
if and only if $\alpha = 1/2$.
\end{lemma}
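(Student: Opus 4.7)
\medskip
\noindent\textbf{Proof plan.} The crucial observation is that $f$ can be rewritten as the negative of a binary Kullback--Leibler divergence after a clean trigonometric substitution. First I would set $\theta:=\arctan(\sqrt{\alpha/(1-\alpha)})\in(0,\pi/2)$, so that $\alpha=\sin^{2}\theta$, $1-\alpha=\cos^{2}\theta$, and $\arctan(\sqrt{(1-\alpha)/\alpha})=\pi/2-\theta$. Then I would introduce $p:=2\theta/\pi\in(0,1)$, so $1-p=2(\pi/2-\theta)/\pi$, and use $\log\theta=\log(\pi/2)+\log p$ together with $\log(\pi/2-\theta)=\log(\pi/2)+\log(1-p)$ to collect terms. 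Two copies of $\log(\pi/2)$ cancel against $\log(2/\pi)$, and one obtains
\[
f(\alpha)=\alpha\log\frac{p}{\alpha}+(1-\alpha)\log\frac{1-p}{1-\alpha}=-D(\alpha\,\|\,p),
\]
where $D(\cdot\,\|\,\cdot)$ denotes the Kullback--Leibler divergence between the Bernoulli distributions with parameters $\alpha$ and $p$.

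By Gibbs' inequality (equivalently, Jensen's inequality applied to $-\log$), $D(\alpha\,\|\,p)\ge 0$, with equality if and only if $\alpha=p$. This immediately gives $f(\alpha)\le 0$ and reduces the equality analysis to characterising the $\alpha\in(0,1)$ for which $\sin^{2}\theta=2\theta/\pi$, where $\theta$ depends on $\alpha$ as above.

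For this fixed-point analysis I would study $g(\theta):=\sin^{2}\theta-2\theta/\pi$ on $[0,\pi/2]$. Direct evaluation gives the three zeros $g(0)=g(\pi/4)=g(\pi/2)=0$. On the other hand, $g'(\theta)=\sin(2\theta)-2/\pi$ has exactly two zeros in $(0,\pi/2)$, namely the two preimages of $2/\pi$ under $\sin$ on $(0,\pi)$ (which sum to $\pi/2$ and so lie in $(0,\pi/4)$ and $(\pi/4,\pi/2)$ respectively). Any additional zero of $g$ in $(0,\pi/2)$ would, via Rolle's theorem, force $g'$ to have at least three zeros in $(0,\pi/2)$, a contradiction. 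Hence $\theta=\pi/4$ is the unique solution, giving $\alpha=\sin^{2}(\pi/4)=1/2$ as the unique equality point.

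The only real obstacle is spotting the KL-divergence reformulation; once the substitution $\theta=\arctan\sqrt{\alpha/(1-\alpha)}$ is in place the main inequality is immediate, and the uniqueness in the equality case is a short exercise with Rolle's theorem. No delicate estimation is required.
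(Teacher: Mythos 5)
Your proof is correct, and it takes a genuinely cleaner route than the one in the paper. Both arguments begin with the substitution $\theta=\arctan\sqrt{\alpha/(1-\alpha)}$, so that $\alpha=\sin^2\theta$, $1-\alpha=\cos^2\theta$, and $\arctan\sqrt{(1-\alpha)/\alpha}=\pi/2-\theta$; at that point the paper splits $(0,\pi/2)$ into four intervals and handles them separately with Taylor expansions near $0,\pi/4,\pi/2$ and a computer-assisted grid check plus a Lipschitz bound on the rest. You instead observe that, writing $p=2\theta/\pi\in(0,1)$, the expression collapses to
\[
f(\alpha)=\alpha\log\frac{p}{\alpha}+(1-\alpha)\log\frac{1-p}{1-\alpha}=-D\bigl(\alpha\,\|\,p\bigr),
\]
the negative of a binary Kullback--Leibler divergence, which is $\le0$ globally by Gibbs' inequality with equality iff $\alpha=p$. (Interestingly, the paper's ``Step 2'' estimate near $\pi/4$ already rewrites $g$ in exactly this KL form locally, but only uses the second-order Taylor bound $\log(1+y)\le y-y^2/4$ rather than the global nonnegativity of the divergence.) Your reduction of the equality case to $\sin^2\theta=2\theta/\pi$ and the Rolle's theorem count are also correct: $g(\theta)=\sin^2\theta-2\theta/\pi$ vanishes at $0,\pi/4,\pi/2$, while $g'(\theta)=\sin2\theta-2/\pi$ has exactly two zeros in $(0,\pi/2)$ (the two preimages of $2/\pi$ under $\sin$ on $(0,\pi)$, one on each side of $\pi/4$), so a fourth zero of $g$ would force a third zero of $g'$. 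The net effect is a fully rigorous proof requiring no computer assistance and no delicate interval bookkeeping; the paper's approach is more ad hoc but perhaps more robust to small changes in the target function should the exact KL structure be perturbed.
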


\begin{figure}
\begin{centering}
\begin{tikzpicture}[]
\begin{axis}[
width=12cm,height=4cm,
scaled ticks=false,
yticklabel style={/pgf/number format/fixed},
every axis plot/.append style={thick},
axis x line*=bottom,
axis y line*=left,
xtick={0,0.2,0.4,0.6,0.8,1},
ytick={0,-0.02,-0.04},
ymin=-0.055,
]
\addplot [blue,domain=0:1,samples=500]
{
x*ln(atan(sqrt(x/(1-x)))/180*pi)-x*ln(x)+(1-x)*ln(atan(sqrt((1-x)/x))/180*pi)-(1-x)*ln(1-x)+ln(2/pi)
};
\end{axis}
\end{tikzpicture}
\par\end{centering}
\caption{\label{fig:f}A plot of the function $f$ from \cref{lem:calculus-inequality}.}
\end{figure}
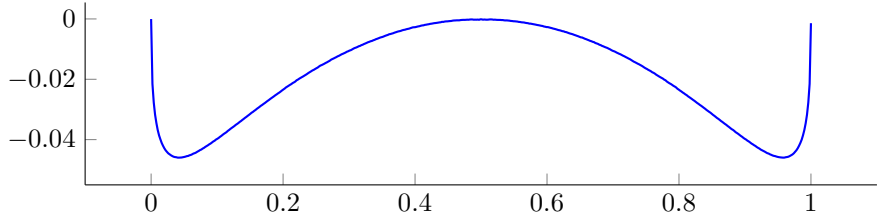

We were not able to find a clean proof of \cref{lem:calculus-inequality}, though it is very believable given a plot of $f$ (see \cref{fig:f}). In 
\cref{sec:calculus-inequality}
we sketch how to formally verify it by combining some computer calculations with Taylor's theorem.
\begin{proof}
[Proof of \cref{lem:moments}]
For a bisection $c:V(D)\to\{1,2\}$, let $\mathcal{E}_{v}^{c}$
be the event that $v$ is majority-coloured with respect to $c$.
If we fix a particular $c$, then the $n$ events $\mathcal{E}_{v}^{c}$
are independent, so the probability that $c$ is a majority-colouring
is $(1/2+o(1))^{n}$ by \cref{lem:moments-input}(1). So,
\[
\mb E X=\binom{n}{n/2}(1/2+o(1))^{n}\ge e^{-o(n)},
\]
proving \cref{lem:moments}(1). For \cref{lem:moments}(2), let 
\begin{align*}
p_{\alpha}(n) & =\left(\frac{1}{\pi}\arctan\left(\sqrt{\frac{\alpha}{1-\alpha}}\right)\right)^{\alpha n}\left(\frac{1}{\pi}\arctan\left(\sqrt{\frac{1-\alpha}{\alpha}}\right)\right)^{(1-\alpha)n}.
\end{align*}

Now, the estimates in \cref{lem:moments-input} hold for any (arbitrarily small) constant $\varepsilon>0$, so they must also hold when $\varepsilon\to 0$ sufficiently slowly. (Concretely, inspecting the proof of \cref{lem:moments-input}, we can take $\varepsilon = 1/\sqrt{\min(np_{n},n-np_n)}=o(1)$). If we consider particular $c,c'$ with overlap $\alpha\in[\varepsilon,1-\varepsilon]$, then the $n$
events of the form $\mathcal{E}_{v}^{c}\cap\mathcal{E}_{v}^{c'}$
are independent, so the probability that $c,c'$ are both majority-colourings
is 
\[
\Pr\left[\bigcap_{v\in V(D)} (\mc E_v^c\cap \mc E_v^{c'})\right]=p_{\alpha}(n)e^{o(n)},
\]
by \cref{lem:moments-input}(2). We can only use this bound for $\alpha\in [\varepsilon,1-\varepsilon]$; for $\alpha\notin[\varepsilon,1-\varepsilon]$
we simply use
the upper bound
\[\Pr\left[\bigcap_{v\in V(D)} (\mc E_v^c\cap \mc E_v^{c'})\right]\le \Pr\left[\bigcap_{v\in V(D)} \mc E_v^c\right]=(1/2+o(1))^{n}\]
from (1). Now, if bisections $c,c'$ have overlap $\alpha$, it must be the case that $c(v)=c'(v)=1$ for exactly $\alpha (n/2)$ different $v$, and $c(v)=c'(v)=2$ for exactly $\alpha (n/2)$ different $v$. So, we obtain
\begin{align}
\mb E X^{2} &\le\sum_{\substack{a\in \mb N:\\2a/n\in[0,\varepsilon]\cup[1-\varepsilon,1]}}
\binom{n}{n/2}\binom{n/2}{a}^{2}(1/2+o(1))^{n}+\sum_{\substack{a\in \mb N:\\2a/n\in[\varepsilon,1-\varepsilon]}}\binom{n}{n/2}\binom{n/2}{a}^{2}p_{2a/n}(n)e^{o(n)}\nonumber\\
\vphantom{\begin{cases}
a\\
b\\
c
\end{cases}}
&\le2\sum_{a=0}^{\floor{\varepsilon n/2}}\binom{n}{n/2}\binom{n/2}{a}^{2}(1/2+o(1))^{n}+\sum_{a=\ceil{\varepsilon n/2}}^{n/2-\floor{\varepsilon n/2}}\binom{n}{n/2}\binom{n/2}{a}^{2}p_{2a/n}(n)e^{o(n)}.\label{eq:EX2}
\end{align}
Note that 
\[
\binom{n}{n/2}\binom{n/2}{x(n/2)}^{2}=2^{n+o(n)}\Big(\exp\big(H(x)(n/2)+o(n)\big)\Big)^{2}=\exp\big(H(x)n+ n\log 2+o(n)\big),
\]
where $H(x)=-x\log x-(1-x)\log(1-x)$. So,
recalling the function $f$ from \cref{lem:calculus-inequality}, and recalling that $f(x)\le0$
for all $x\in (0,1)$, we have that for $a\in[\ceil{\varepsilon n/2},{n/2}-\floor{\varepsilon n/2}]$,
\[
\binom{n}{n/2}\binom{n/2}{a}^{2}p_{2a/n}(n)=\exp\big(f(2\alpha/n)n+o(n)\big)=e^{o(n)}.
\]
Then, recalling that $\varepsilon=o(1)$, we observe that
\[\binom{n}{n/2}\binom{n/2}{a}(1/2+o(1))^{n}=e^{o(n)}\]
for $a\le \varepsilon n/2$. Recalling \cref{eq:EX2}, we deduce that $\mb E X^{2}=2n e^{o(n)}=e^{o(n)}$.
\end{proof}

\begin{remark}\label{remark:OGP}
    The proof of \cref{thm:majority-2-colouring} (in particular, the fact that $f(x)$ is negative for $x\notin\{0,1/2,1\}$, from \cref{lem:calculus-inequality})  essentially shows that in $D\sim \mb D(n,p_n)$ (with $np_n(1-p_n)\to\infty$), whp every pair of $o(1)$-almost-majority bisections has overlap very close to $0$, $1/2$ or $1$. That is to say, the space of almost-majority bisections is extremely disconnected, and in particular the \emph{overlap gap property} (see for example \cite{Gam21}) is satisfied. This strongly suggests that it is computationally intractable to actually locate $o(1)$-almost-majority bisections in random digraphs, despite the fact that they exist whp. We remark that our proof does \emph{not} show that the space of \emph{all} $o(1)$-almost-majority 2-colourings (not necessarily bisections) satisfies the overlap gap property, but we suspect that this fact could also be established with some additional (more involved) moment calculations.
\end{remark}

\section{Internal and external bisections}\label{sec:internal-external}

In this section we prove \cref{thm:ban-linial}, and give the (easy) deduction of \cref{thm:random}. Given a partition of the vertices of a graph into two parts, say that a vertex is \emph{internal} if it has at least as many neighbours on its side as the opposite side.

\begin{proof}[Proof of \cref{thm:ban-linial}]
Fix a graph $G$ on the vertex set $V=\{1,\dots,n\}$ with maximum degree at most $d$ and let $\varepsilon>0$. Assuming $n$ is sufficiently large, we will show that there is a bisection in which all but $\varepsilon n$ vertices are internal (the ``external'' problem can be solved in an identical manner).

Choose a uniformly random vertex partition $V=A_0\cup B_0$ (i.e., flip a fair coin for each vertex to decide its part). Then, let $p=1/d$ and $K=5d^2/\varepsilon$, and to each vertex $v$, independently assign a random binary sequence $s(v)\sim \operatorname{Bernoulli}(p)^{\otimes K}$ (i.e., a sequence of $p$-biased coin flips). Then, iteratively, for each $i\in \{1,\dots,K\}$: starting from the partition $A_{i-1}\cup B_{i-1}$, define a new partition  $A_i\cup B_i$ as follows. For each vertex $v$ that is not internal and such that $s(v)_i=1$, move $v$ to the other part of the partition. We can view this as a ``lazy'' greedy swapping process, where in each step we swap a $p$-fraction of the vertices that aren't internal.

Let $Z_i$ be the number of vertices that are not internal with respect to the partition $A_i\cup B_i$. We now claim that there is some $t< K$ such that $Z_t\le \varepsilon n/2$ with probability at least $\varepsilon/(4d)$. For the purpose of contradiction, assume that this is false. 

Let $X_i$ be the number of edges between $A_i$ and $B_i$. For an outcome of $(A_{i-1},B_{i-1})$ with $Z_i\ge \varepsilon n/2$, note that
\[\mathbb E[X_i|A_{i-1},B_{i-1}]\le X_{i-1}-p Z_{i-1}+p^2\cdot \frac{d Z_{i-1}}2\le X_{i-1}-\frac p 2 \cdot Z_{i-1}\le X_{i-1}-\frac{\varepsilon}{4d} n.\]
Indeed, if a vertex is not internal, then moving that single vertex to the other side of the partition decreases the number of edges between the two parts. Since we are moving multiple vertices at once, we also need to account for the edges which have both their endpoints moved together (note that there are at most $d Z_{i-1}/2$ edges both of whose endpoints are not internal).

For all $i\le K$, since we are assuming that $Z_{i-1}\le \varepsilon n/2$ with probability less than $\varepsilon/(4d^2)$, we deduce the unconditional bound
\begin{align*}\mathbb E[X_i-X_{i-1}]&\le \Pr[Z_i\le \varepsilon n/2]\,|E(G)| + \Pr[Z_i> \varepsilon n/2]\mb E[X_i-X_{i-1}|Z_i>\varepsilon n/2]\\
&\le \frac\varepsilon{4d^2} \cdot \frac{d n}{2} - \left(1-\frac\varepsilon{4d^2}\right)\frac{\varepsilon}{4d} n\le -\frac{\varepsilon}{9d} n
\end{align*}
(assuming, as we may, that $\varepsilon/d$ is sufficiently small). It follows that $\mathbb E[X_K-X_0]\le -K\varepsilon n/(9d)< -d n/2$, which is a contradiction because each $0\le X_i\le |E(G)|\le d n/2$.

We have proved that there is some $t$ for which $Z_t\le \varepsilon n/2$ with probability at least $\varepsilon/(4d^2)$. Next note that if, for a single vertex $v$, we modify the initial part that $v$ appears in, and/or the contents of the list $s(v)$, then as a result $|A_t|$ can change by at most $d^t\le d^K$ (because $G$ has maximum degree at most $d$, the number of vertices that can be affected by our single-vertex change grows by a factor of at most $d$ in every round of our process). So, by the Azuma--Hoeffding inequality (see for example \cite[Theorem~7.2.1]{AS16}) we have
\[\Pr\big[|A_t|< \mb E|A_t|-\varepsilon n/2d\big]\le \exp\left(-\frac{(\varepsilon n/2d)^2}{nd^{2K}}\right)=\exp\left(-\frac{\varepsilon^2}{4d^{10d^2/\varepsilon+2}}\cdot n\right),\] and by symmetry the same inequality holds for $B_t$.

By symmetry $\mb E|A_t|=\mb E|B_t|=n/2$, so if $n$ is sufficiently large with respect to $\varepsilon,d$, with positive probability we have $Z_t\le \varepsilon n/2$ and $|A_t|,|B_t|\ge n/2-\varepsilon n/2d$ (recall that $Z_t\le \varepsilon n/2$ with probability at least $\varepsilon/(4d^2)$). We can move at most $\varepsilon n/4d$ vertices from $A_t$ to $B_t$ (or vice versa) to obtain a bisection, and doing so causes at most $(d+1)\varepsilon n/4d <
\varepsilon n/2$ additional vertices to stop being internal (since each vertex we move has degree at most $d$).
So, we obtain a bisection in which all but at most $\varepsilon n$ vertices are internal, as desired.
\end{proof}

We now deduce \cref{thm:random}.
\begin{proof}
[Proof of \cref{thm:random}]We show that for any fixed $\varepsilon>0$, whp $G$
has a $3\varepsilon$-almost-internal bisection (virtually the same proof
shows that $G$ has a $3\varepsilon$-almost-external bisection). The desired
result will follow, taking $\varepsilon\to0$ sufficiently slowly.

Let $V_{\ge d}$ be the set of vertices with degree at least $d$.
A simple calculation (see for example \cite[Lemma~6.2(A4)]{GKSS}) shows that there is
some $d$ (depending only on $\varepsilon$) such that whp $|V_{\ge d}|\le\varepsilon n$ and $\sum_{v\in V_{\ge d}} \deg(v)\le\varepsilon n$.
By \cref{thm:ban-linial}, the graph $G-V_{\ge d}$ obtained by removing high-degree
vertices has an $\varepsilon$-almost-internal bisection. We can then arbitrarily
extend this to a $3\varepsilon$-almost-internal bisection of $G$.
\end{proof}

\noindent\textbf{Acknowledgments. }We are grateful to the anonymous referees for their thorough reading of the paper, and for many suggestions which have improved the exposition throughout.

Michael Anastos was supported by the European Union’s Horizon 2020 research and innovation
programme under the Marie Sk\l{}odowska-Curie grant agreement No.\ 101034413.
Matthew Kwan was supported by ERC Starting Grant ``RANDSTRUCT'' No.\ 101076777, also funded by the European Union
\includegraphics[width=4.5mm, height=3mm]{eu_flag.jpg}. Mihyun Kang was supported in part by the Austrian Science Fund (FWF) [10.55776/I6502]. For the purpose of open access, the authors have applied a CC-BY public copyright licence to any Author Accepted Manuscript version arising from this submission.

\bibliographystyle{amsplain_initials_nobysame_nomr}
\bibliography{main.bib}

\appendix
\section{Computations for the recolouring recurrence}\label{sec:computations}
In this section we prove \cref{lem:P-slope}.
Recall that $P_\lambda$ is a weighted average of the $Q_d$. For most $d$, we will take advantage of the inequality $Q_d(f)\le a_d f$, where
\[
a_{d}=d\cdot\frac{1}{3}\cdot\sum_{i=\floor{d/2}}^{d}\binom{d-1}{i}\left(\frac{1}{3}\right)^{i}\left(\frac{2}{3}\right)^{d-1-i}.
\]
The easiest way to see this inequality is to recall the interpretation of $Q_{d}(f)$
as the probability of the event that a particular colour (say, red)
``overtakes'' among a set of $d$ vertices (i.e., we start with
a random 3-colouring, and for each vertex, randomly change it to a
different colour with probability $f$; then we consider the event
that red enjoyed a strict majority after but not before these changes).
For this overtaking event to occur, there must have been a vertex
which changed to red (this happens for each vertex with probability
$f/3$), and at least $\floor{d/2}$ other vertices must be red after
the changes.

Now, \cref{lem:P-slope} is a consequence of the following lemmas.
\begin{lemma}\label{lem:a}
$2a_{d}<0.98$ for even $d$, for $d=1$, and for odd $d\ge29$.
\end{lemma}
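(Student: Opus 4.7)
The key is to recognize that
\[
a_d \;=\; \frac{d}{3}\,\Pr[Y_d \geq \lfloor d/2 \rfloor], \qquad Y_d \sim \operatorname{Binomial}(d-1,\,1/3),
\]
which follows directly from the definition once one notes that the $i = d$ term vanishes because $\binom{d-1}{d} = 0$. The case $d = 1$ is then immediate: only $i = 0$ contributes, so $a_1 = 1/3$ and $2a_1 = 2/3 < 0.98$. Moreover $\mathbb{E} Y_d = (d-1)/3$ lies well below $d/2$, so the event $\{Y_d \geq \lfloor d/2 \rfloor\}$ is a moderate-deviation event whose probability decays exponentially in $d$, and in particular $a_d \to 0$ as $d \to \infty$.

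To make this decay quantitative I would apply Chernoff:
\[
\Pr[Y_d \geq \lfloor d/2 \rfloor] \;\leq\; \exp\bigl(-(d-1)\,D(\tfrac12\|\tfrac13)\bigr) \;=\; (8/9)^{(d-1)/2},
\]
using $D(\tfrac12\|\tfrac13) = \tfrac12 \log(9/8)$. Hence $2a_d \leq \tfrac{2d}{3}(8/9)^{(d-1)/2}$, which is strictly less than $0.98$ for all $d$ past some explicit constant $D_0$ (a rough numerical estimate suggests $D_0 \approx 80$ is amply sufficient). This disposes of all sufficiently large $d$ uniformly in parity. For the remaining finite range---namely even $d \in \{2,4,\dots,D_0\}$ and odd $d \in \{29,31,\dots,D_0\}$---the number $a_d$ is a concrete rational, and I would verify $2a_d < 0.98$ for each such $d$ by direct evaluation (exact rational arithmetic or high-precision floating point) with computer assistance. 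The odd values $d \in \{3,5,\dots,27\}$ are intentionally excluded from the statement because for them $2a_d$ actually exceeds $1$, consistent with \cref{rem:7-9-11}.

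The main obstacle is not conceptual but the need to balance a sharp enough tail bound against a tractable finite check: the crude Chernoff estimate is wasteful and only beats $0.49$ in the range $d \gtrsim 60$--$80$, leaving a non-negligible (though routine) list of values to verify by hand or machine. This can be ameliorated by observing that consecutive terms of the binomial PMF at $p = 1/3$ decrease by a factor of at least $2$ once the index exceeds $\lfloor d/2 \rfloor$, yielding the geometric-tail bound $\Pr[Y_d \geq \lfloor d/2 \rfloor] \leq 2\binom{d-1}{\lfloor d/2 \rfloor}(1/3)^{\lfloor d/2 \rfloor}(2/3)^{\lceil d/2 \rceil - 1}$; combined with Stirling this shrinks $D_0$ considerably and keeps the computation very manageable.
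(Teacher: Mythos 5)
Your proposal is correct in substance and tracks the paper's approach closely. You both begin from the interpretation $a_d = \tfrac{d}{3}\Pr[Y_d \ge \lfloor d/2\rfloor]$ with $Y_d\sim\operatorname{Binomial}(d-1,1/3)$, treat $d=1$ by direct evaluation, and then need a quantitative tail bound plus a finite computer check. The difference is in how that finite range is tamed. Your primary Chernoff estimate $2a_d \le \tfrac{2d}{3}(8/9)^{(d-1)/2}$ is valid (since $\lfloor d/2\rfloor/(d-1)\ge 1/2$ and the relative entropy is monotone past the mean), but it only dips below $0.98$ around $d\approx 75$--$80$, leaving a large finite check. Your suggested refinement via the geometric decay of the binomial PMF for $i\ge\lfloor d/2\rfloor$ is in fact exactly the paper's bound: after simplifying, $2\binom{d-1}{\lfloor d/2\rfloor}(1/3)^{\lfloor d/2\rfloor}(2/3)^{\lceil d/2\rceil-1}\cdot \tfrac{2d}{3} = 2d\binom{d-1}{\lfloor d/2\rfloor}\cdot 2^{d-\lfloor d/2\rfloor}/3^d =: b_d$, which is what the paper calls $b_d$. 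Where the paper goes one step further than your proposal is in not pushing $b_d$ through Stirling: instead it observes that the ratio $b_d/b_{d+2}$ is an explicit rational expression that exceeds $1$ once $d\ge 24$ (even) or $d\ge 33$ (odd), so it suffices to verify $b_{24}\le 0.95$, $b_{33}\le 0.95$, and a short list of small $d$. This cuts the finite check to roughly a dozen values with exact rational arithmetic, versus the somewhat larger or less cleanly delimited range your Stirling route would leave.

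One small inaccuracy worth flagging: you assert that for the excluded odd $d\in\{3,5,\dots,27\}$ one has $2a_d>1$. While this holds for the small odd values (e.g.\ $2a_3=10/9$, $2a_5=110/81$), it is not obviously true across the whole range up to $d=27$, nor is it needed: the relevant point is only that $2a_d$ may fail the $<0.98$ bound there, which is why the paper handles those cases separately via \cref{lem:Q-concave} and the explicit values $2Q_d'(0)$. This is a cosmetic remark and does not affect the validity of your argument for the cases the lemma actually claims.
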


\begin{lemma}\label{lem:Q-concave}
$Q_{d}''(f)\le0$ for odd $3\le d\le27$ and $f\in[0,1/3]$.
\end{lemma}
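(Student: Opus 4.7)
The plan is a direct, computer-assisted verification for each of the 13 odd values $d\in\{3,5,\dots,27\}$. For each such $d$, the closed form in \cref{eq:Qd} expresses $Q_d(f)$ as a polynomial in $f$ of degree at most $d$ with rational coefficients, and symbolic differentiation produces $Q_d''(f)$ with exact rational coefficients. There does not appear to be a slick structural reason for concavity on $[0,1/3]$ (for example, one can rewrite $Q_d(f) = c_d - \Pr[\text{red is a strict majority both before and after the recolouring}]$ using the fact that the uniform distribution on colours is invariant under the recolouring, which reduces the question to convexity of a certain joint probability, but this reformulation does not seem easier to handle directly), so I would just proceed polynomial by polynomial.

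To certify rigorously that $Q_d''(f)\le 0$ on $[0,1/3]$, I would substitute $f=g/3$ and rewrite the polynomial $Q_d''(g/3)$, of degree $m\le d-2$, in the Bernstein basis on $[0,1]$:
\[
Q_d''(g/3)=\sum_{k=0}^{m}c_{d,k}\binom{m}{k}g^k(1-g)^{m-k}.
\]
Since every Bernstein basis element is non-negative on $[0,1]$, it suffices to check that each $c_{d,k}\le 0$. All arithmetic is carried out in exact rationals, so the certificate is rigorous whenever it succeeds. Should the Bernstein check be inconclusive for some value of $d$, one can refine via de Casteljau subdivision: split $[0,1]$ into sub-intervals, recompute the Bernstein coefficients on each, and iterate; this procedure is guaranteed to terminate whenever $Q_d''\le 0$ genuinely holds on $[0,1/3]$. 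As a fallback, Sturm's theorem in exact arithmetic can be used to count the roots of $Q_d''$ in $[0,1/3]$, combined with sign evaluation at a single point.

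The main obstacle is purely computational rather than conceptual: one must guard against implementation errors across 13 polynomials of degree up to about $25$, arising from a triple sum with multinomial weights. Because the interval $[0,1/3]$ is short and the $Q_d$ arise naturally from a probabilistic expression (in particular $Q_d(0)=0$ and the leading behaviour near zero is linear), I expect the one-shot Bernstein certificate to succeed outright for most values of $d$, with at most a single subdivision required in the remaining cases.
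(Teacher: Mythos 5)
Your proposal is correct, and at the level of mathematical content it is the same kind of argument as the paper's: a computer-assisted, exact-rational verification of thirteen explicit polynomial inequalities of degree at most $25$ on $[0,1/3]$. The difference is only in the certificate. The paper's appendix indicates that it rigorously isolates the real roots of each $Q_d''$ (so that one can see none lie in $[0,1/3]$ and then check the sign at a test point), whereas you propose a Bernstein-basis non-positivity certificate after the change of variables $f=g/3$, with de Casteljau subdivision and then Sturm's theorem as fallbacks. Both are sound; the Bernstein route has the appeal that a successful one-shot check is a single exact linear change of basis per $d$ and requires no root isolation, while the root-isolation/Sturm route is more classical and handles cases where Bernstein coefficients refuse to become uniformly non-positive. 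One minor caveat on your termination claim: de Casteljau subdivision is only guaranteed to terminate for a \emph{strict} inequality; if $Q_d''$ vanishes at an interior point of $(0,1/3)$ (necessarily with even multiplicity under the hypothesis $Q_d''\le 0$), subdivision alone need not certify the bound, and one should fall back to Sturm or factor out the square explicitly. In practice this does not occur for these thirteen polynomials, so the plan as stated does go through.
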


\begin{lemma}
$2Q_{d}'(0)\le0.99$ for odd $d\le 27$ with $d\notin\{7,9,11\}$.
\end{lemma}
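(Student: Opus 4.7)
The plan is to derive a closed-form expression for $Q_d'(0)$ directly from \cref{eq:Qd}, and then verify the stated inequality case-by-case for the (finitely many) odd $d\le 27$ with $d\notin\{7,9,11\}$, namely $d\in\{1,3,5,13,15,17,19,21,23,25,27\}$.

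For the derivation, I would observe that in the formula for $Q_d(f)$ the only factors carrying a positive power of $f$ are $f^{k}$ and $(f/2)^{j-i+k}$, whose combined exponent is $j-i+2k$. The coefficient of $f^1$ therefore comes only from those indices $(i,j,k)$ satisfying $j-i+2k=1$, with the remaining factors $(1-f)^{i-k}$ and $(1-f/2)^{d-j-k}$ evaluated at $f=0$. Since $j\ge\floor{d/2}+1$ and $i\le\floor{d/2}$, the unique surviving choice is $k=0$, $i=\floor{d/2}$, $j=\floor{d/2}+1$, leading to
\[
Q_d'(0) \;=\; \tfrac{1}{2}\bigl(d-\floor{d/2}\bigr)\binom{d}{\floor{d/2}}\!\left(\tfrac{1}{3}\right)^{\!\floor{d/2}}\!\left(\tfrac{2}{3}\right)^{\!d-\floor{d/2}},
\]
which is precisely the identity already used in the proof of \cref{marking-prelim-consequences}.

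For odd $d$ this specialises to $2Q_d'(0)=\tfrac{d+1}{2}\binom{d}{(d-1)/2}(1/3)^{(d-1)/2}(2/3)^{(d+1)/2}$, so the remaining task reduces to arithmetic on eleven rational numbers. A direct evaluation gives $2Q_1'(0)=0$, $2Q_3'(0)=8/9$, $2Q_5'(0)=240/243\approx 0.988$, and $2Q_{13}'(0)=7\cdot 1716\cdot 128/(729\cdot 2187)\approx 0.964$. For the remaining odd $d\ge 15$, I would observe that cancelling binomial and power factors yields
\[
\frac{2Q_{d+2}'(0)}{2Q_d'(0)} \;=\; \frac{8(d+2)}{9(d+1)},
\]
which is strictly less than $1$ for every $d\ge 8$; hence the sequence $(2Q_d'(0))_{d\text{ odd}}$ is monotonically decreasing on $d\ge 11$, so in particular $2Q_d'(0)\le 2Q_{13}'(0)<0.99$ for all odd $d\ge 13$.

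There is no real mathematical obstacle: the whole argument is a finite inspection, and the ``computer assistance'' mentioned earlier in the section simply amounts to the convenience of tabulating a handful of rationals rather than manipulating them by hand.
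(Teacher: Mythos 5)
Your formula for $Q_d'(0)$ is correct (it is the same identity the paper records in the proof of \cref{marking-prelim-consequences}), and the remaining finite verification is exactly what the paper means by ``straightforwardly proved by computer''; the ratio identity $2Q_{d+2}'(0)/2Q_d'(0)=8(d+2)/\bigl(9(d+1)\bigr)$ you use to truncate the case check is a nice refinement that mirrors the paper's own strategy in its proof of \cref{lem:a}. One small caveat: your closed-form is derived under the assumption $\floor{d/2}\ge 1$, so it does not apply to $d=1$ (there the outer sum over $i$ in the definition of $Q_d$ is empty, so $Q_1\equiv 0$); you correctly state $2Q_1'(0)=0$, but the displayed specialisation $\frac{d+1}{2}\binom{d}{(d-1)/2}(1/3)^{(d-1)/2}(2/3)^{(d+1)/2}$ would instead give $2/3$ for $d=1$. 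Both values are below $0.99$, so the conclusion is unaffected, but the text should separate $d=1$ from the formula rather than present it as a ``direct evaluation'' of the same expression.
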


\begin{lemma}
We have
\[
2Q_{7}'(0)=\frac{2240}{2187}\approx1.024,\quad2Q_{9}'(0)=\frac{2240}{2187}\approx1.024,\quad 2Q_{11}'(0)=\frac{19712}{19683}\approx1.001.
\]
\end{lemma}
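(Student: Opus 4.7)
The plan is to directly apply the closed-form expression for $Q_d'(0)$ that was already derived in the proof of \cref{T-critical}, namely
\[
Q_{d}'(0) = \frac{1}{2}\big(d-\floor{d/2}\big)\binom{d}{\floor{d/2}}\left(\frac{1}{3}\right)^{\floor{d/2}}\left(\frac{2}{3}\right)^{d-\floor{d/2}}.
\]
Briefly recalling the justification: in the expansion of $Q_d(f)$ from \cref{eq:Qd}, the $f$-dependent factor in the $(i,j,k)$ summand is $f^k(1-f)^{i-k}(f/2)^{j-i+k}(1-f/2)^{d-j-k}$, which vanishes at $f=0$ with order $j-i+2k$. For a nonvanishing contribution to the derivative at $0$ we need $j-i+2k = 1$, and combined with the constraints $0\le k\le i\le \floor{d/2} < \floor{d/2}+1 \le j$ this forces $(i,j,k) = (\floor{d/2}, \floor{d/2}+1, 0)$; substituting back gives the displayed formula.

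With the formula in hand, the proof reduces to substituting $d \in \{7, 9, 11\}$ and simplifying. I would compute the relevant central binomial coefficients ($\binom{7}{3} = 35$, $\binom{9}{4} = 126$, $\binom{11}{5} = 462$), multiply each by the corresponding factor $d-\floor{d/2} \in \{4, 5, 6\}$ and by the appropriate powers of $1/3$ and $2/3$, and reduce the resulting fraction to lowest terms. For $d=7$ the denominator $3^7 = 2187$ already appears in reduced form; for $d = 9$ and $d = 11$ the pre-cancellation denominators are $3^9 = 19683$ and $3^{11} = 177147$, and in each case one cancels a common factor of $9$ with the numerator to match the stated denominators $2187$ and $19683$.

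There is no genuine obstacle here: the computation is elementary arithmetic, and the only real risk is a numerical slip. I would guard against this by cross-multiplying against the stated target fractions and by checking that the $3$-adic valuations of numerator and denominator work out as expected. The reason this trivial-looking lemma is worth recording is the point flagged in \cref{rem:7-9-11}: $d \in \{7,9,11\}$ are precisely the odd values of $d$ for which $2Q_d'(0)$ slightly exceeds $1$, which is what makes the Poisson averaging in the definition of $P_\lambda$ indispensable for the conclusion of \cref{lem:P-slope}.
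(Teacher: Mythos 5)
Your proposal is correct and takes essentially the same approach as the paper: the paper relegates this lemma to a routine computer check, but the computation is exactly what you describe, and indeed the closed form $Q_d'(0)=\tfrac12(d-\floor{d/2})\binom{d}{\floor{d/2}}(1/3)^{\floor{d/2}}(2/3)^{d-\floor{d/2}}$ is already stated and justified verbatim in the paper's proof of the lemma containing \cref{T-critical}. Your arithmetic checks out for all three values of $d$.
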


\begin{lemma}\label{lem:Q-poisson-weighted}
Let $Z\sim\operatorname{Poisson}(\lambda)$ for any $\lambda>0$.
Then
\[
0.99\Pr[Z\notin\{7,9,11\}]+\frac{2240}{2187}\Pr[Z=7]+\frac{2240}{2187}\Pr[Z=9]+\frac{19712}{19683}\Pr[Z=11]<0.999.
\]
\end{lemma}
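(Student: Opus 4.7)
The plan is to rewrite the LHS as an explicit function of $\lambda$ and bound its supremum by analysing critical points. Using $\sum_{k \ge 0} p_k(\lambda) = 1$, where $p_k(\lambda) := e^{-\lambda}\lambda^k/k!$, the LHS equals
\[0.99 + \alpha\bigl(p_7(\lambda) + p_9(\lambda)\bigr) + \beta\,p_{11}(\lambda),\]
with $\alpha := \frac{2240}{2187} - 0.99 > 0$ and $\beta := \frac{19712}{19683} - 0.99 > 0$. So the target reduces to showing
\[g(\lambda) := \alpha\bigl(p_7(\lambda) + p_9(\lambda)\bigr) + \beta\,p_{11}(\lambda) < 0.009 \qquad \text{for all } \lambda > 0.\]

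Next I would observe that $g$ is smooth and nonnegative, with $\lim_{\lambda \to 0^+} g(\lambda) = 0$ and $\lim_{\lambda \to \infty} g(\lambda) = 0$ (each $p_k(\lambda)$ vanishes at both ends), so its supremum over $(0,\infty)$ is attained at some finite critical point $\lambda^* > 0$. Using the standard identity $p_k'(\lambda) = p_{k-1}(\lambda) - p_k(\lambda)$, the equation $g'(\lambda^*) = 0$ rearranges to $\alpha\,(p_6 - p_7 + p_8 - p_9)(\lambda^*) + \beta\,(p_{10} - p_{11})(\lambda^*) = 0$. Multiplying through by $11!\,e^{\lambda^*}$ turns this into a polynomial equation of degree $10$ with only finitely many positive real roots, each of which can in principle be rigorously localised (e.g.\ via Sturm sequences or interval arithmetic).

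To finish I would verify, with computer assistance, that $g(\lambda^*) < 0.009$ at each critical point. A concrete and more elementary route that avoids working directly with the critical polynomial is: first, use crude bounds such as $p_k(\lambda) \le \lambda^k/k!$ (for small $\lambda$) and Stirling's estimate (for large $\lambda$) to reduce to a bounded interval $\lambda \in [\lambda_-, \lambda_+]$ (say $[1,30]$) outside which $g$ is manifestly far less than $0.009$; then evaluate $g$ on a sufficiently fine grid of this interval and interpolate using the crude Lipschitz bound $|g'(\lambda)| \le 2\alpha + \beta$.

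The hard part will be purely quantitative: the maximum of $g$ is attained near $\lambda \approx 8.5$ and lies only slightly below the target threshold, so the numerical verification must be carried out with enough precision to confirm the strict inequality. This is the reason the authors invoke computer assistance in this appendix.
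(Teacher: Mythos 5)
Your approach is essentially the same as the paper's: the paper's appendix reduces the claim to bounding the maximum of a function of the form $\lambda \mapsto e^{-\lambda} p(\lambda)$ (for a degree-$11$ polynomial $p$) and locating the roots of the derivative numerically; your decomposition of the left-hand side as $0.99 + \alpha\bigl(p_7(\lambda)+p_9(\lambda)\bigr)+\beta\,p_{11}(\lambda)$ is exactly this reduction. Two corrections are in order, though.

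First, a small slip: after multiplying $g'(\lambda)=\alpha(p_6-p_7+p_8-p_9)+\beta(p_{10}-p_{11})$ through by $11!\,e^{\lambda}$, the leading contribution is $-\beta\lambda^{11}$, so the resulting critical polynomial has degree $11$, not $10$. This matches the paper's parenthetical ``a different polynomial of degree 11''.

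Second, and more importantly, the numerical verification you outline would \emph{not} succeed with the stated threshold: near $\lambda\approx 8.3$ one computes $p_7+p_9\approx 0.262$ and $p_{11}\approx 0.080$, giving $g(\lambda)\approx 0.0342\cdot 0.262 + 0.0115\cdot 0.080 \approx 0.00988 > 0.009$, so the maximum of $g$ lies \emph{above} the target $0.009 = 0.999 - 0.99$, not ``slightly below'' as you assert. The inequality as written in the lemma (with right-hand side $0.999$) therefore appears to contain a typo; it should read $<0.9999$, which is precisely the constant required by \cref{lem:P-slope} (where the final claim is $2P_\lambda'(0)\le 0.9999$). With the corrected threshold $0.0099 = 0.9999-0.99$, your verification plan goes through and matches the paper's method: the maximum of $g$ is approximately $0.00989 < 0.0099$, and the grid-plus-Lipschitz (or Sturm-sequence) certification would confirm it.
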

With the exception of \cref{lem:a}, all of these lemmas can be straightforwardly proved by computer. Specifically, for \cref{lem:Q-concave}, we need to be able to estimate (to provably sufficient accuracy) the roots of some explicit single-variable polynomials of degree up to 25, and for \cref{lem:Q-poisson-weighted} we need to be able to estimate the maximum value of the function $\lambda\mapsto e^{-\lambda}p(\lambda)$ for an explicit degree-11 polynomial $p$ (upon differentiating, this amounts to estimating the roots of a different polynomial of degree 11).

\begin{proof}[Proof of \cref{lem:a}]
Noting that $\floor{d/2}=\ceil{(d-1)/2}$, so $\binom{d-1}{i}\le\binom{d-1}{\floor{d/2}}$
for all $i$, we have
\begin{align*}
2a_{d}&=\frac{2d}{3}\sum_{i=\floor{d/2}}^{d}\binom{d-1}{i}\left(\frac{1}{3}\right)^{i}\left(\frac{2}{3}\right)^{d-1-i}\le d\binom{d-1}{\floor{d/2}}\left(\frac{2^{d-\floor{d/2}}}{3^{d}}\right)\sum_{i=\floor{d/2}}^{d}\left(\frac{1}{2}\right)^{i-\floor{d/2}}\\
&\le2d\binom{d-1}{\floor{d/2}}\left(\frac{2^{d-\floor{d/2}}}{3^{d}}\right).
\end{align*}

Let $b_{d}=2d\binom{d-1}{\floor{d/2}}\left(\frac{2^{d-\floor{d/2}}}{3^{d}}\right).$
We may compute $b_{24}\le 0.95$, and note that if $d\ge 24$ is even then
\[
    \frac{b_d}{b_{d+2}}=\frac{d\binom{d-1}{d/2}}{(d+2)\binom{d+1}{d/2+1}(2/9)} =\frac{9d(d/2+1)(d/2)}{2(d+2)d(d+1)} = \frac98\cdot\frac{d}{d+1}>1.
\]
Also, we may compute $b_{33}\le 0.95$, and note that if $d\ge 33$ is odd then
\[
    \frac{b_d}{b_{d+2}}=\frac{d\binom{d-1}{(d-1)/2}}{(d+2)\binom{d+1}{(d+1)/2}(2/9)} =\frac92\cdot\frac{((d+1)/2)^2}{(d+1)(d+2)} = \frac98\cdot\frac{d+1}{d+2}>1. 
\]
So, it suffices to observe (by computer) that $2a_{d}\le0.98$ for even $d\le22$ and odd $29\le d\le31$.
\end{proof}

\section{An inequality for the second moment calculation}\label{sec:calculus-inequality}
In this section we explain how to prove \cref{lem:calculus-inequality} (with the assistance of a computer).

\begin{proof}[Proof Sketch for \cref{lem:calculus-inequality}]
By considering the substitution $\tan^2 x=\alpha/(1-\alpha)$ for $\alpha\in (0,1)$ (hence $\alpha=\sin^2 x$ and $1-\alpha=\cos^2 x$), it suffices to prove that the function $g:(0,\pi/2)\mapsto \mathbb{R}$ given by 
$$g(x)= (\sin^2 x) \cdot (\log x) - (\sin^2 x)\cdot\left(\log \sin^2 x\right) + (\cos^2 x)\cdot\left(\log(\pi/2- x)\right) -(\cos^2 x)\cdot\left(\log \cos^2 x\right)-\log(\pi/2) $$
is non-positive, and is equal to zero only when $x=\pi/4$. The idea is to first use Taylor expansions to deal with small neighbourhoods of the points $0,\pi/4,\pi/2$. Away from these points, we have enough room to prove the desired inequality by computing $g(x)$ for a fine mesh of $x$ and applying the mean value theorem.

So, we partition the interval $(0,\pi/2)$ as $I_1\cup I_2\cup I_3\cup I$, where \[I_1=(0,0.1],\quad I_2=[\pi/4-0.1,\pi/4+0.1], \quad I_3=[\pi/2-0.1,\pi/2) \]
and 
$$I=(0,\pi/2)\setminus (I_1\cup I_2\cup I_3) =(0.1,\pi/4-0.1) \cup (\pi/4+0.1,\pi/2-0.1).$$

\medskip
\noindent\textit{Step 1: Intervals $I_1$ and $I_3$.} First, we show that $g(x)\le -0.1x<0$ when $x\in I_1$. 

For $x\in I_1$, Taylor expansions yield
\begin{align*}
    x-x^3/6\leq \sin x\leq x,\quad 1-x^2/2\leq \cos x\leq 1, \quad x\le -\log(1-x)\le x+x^2,
\end{align*}
which we will use throughout the proof for $x\in I_1$ below. 

For the first two terms in $g(x)$ the following inequalities hold:
\begin{align*}
    (\sin^2x)\cdot (\log x - \log \sin^2x)&\leq x^2 (\log x - \log \sin^2x) = - x^2 \log \bfrac{\sin^2 x}{x} 
    \\&\leq -x^2  \log \bfrac{(x-x^3/6)^2}{x}
    \\& =-x^2 \log(x(1-x^2/6)^2) =-x^2\log x -2x^2 \log(1-x^2/6) 
    \\&\leq -x^2\log x +2x^2 (x^2/6+(x^2/6)^2) 
    \\&\leq -x^2\log x +0.001x
    \\&\leq -(0.1\log 0.1)x+0.001x
    \\&\leq 0.4x,
\end{align*}
where the second-last inequality follow because $x\in (0,0.1]$
and the function $y\mapsto -y\log y$ is increasing in $I_1$. 
Analogously, we obtain the following inequalities for the third and fourth terms in $g(x)$:
\begin{align*}
    (\cos^2 x)\cdot (\log(\pi/2- x) - \log \cos^2 x)
    &\leq  \log(\pi/2- x) - \log \cos^2 x
    =  \log(\pi/2)+ \log(1- 2x/\pi) - 2\log \cos x
     \\&\leq \log(\pi/2)+\log(1-2x/\pi)-2\log(1-x^2/2)
   \\&  \leq \log(\pi/2)-2x/\pi +2(x^2/2+(x^2/2)^2) 
    \\& \leq \log(\pi/2)-0.5x,
\end{align*}
where the last inequality follows because  $x\in (0,0.1]$.

By summing up the two inequalities above we get 
\begin{align}\label{eq:interval1}
    g(x)\leq -0.1x < 0 \quad \text{for} \quad x\in I_1=(0,0.1],
\end{align}
as desired.

To deal with the interval $I_3$, recall that $\sin x= \cos (\pi/2-x)$ for $x\in \mathbb{R}$. Thus, $g(x)=g(\pi/2-x)$ for $x\in (0,\pi/2)$. Moreover if $x\in I_3$ then $\pi/2-x \in I_1$. Thus \eqref{eq:interval1} implies that
$$g(x)=g(\pi/2-x)\le - 0.1(\pi/2-x)<0\quad \text{for} \quad x\in I_3=[\pi/2-0.1,\pi/2).$$

\medskip
\noindent\textit{Step 2: Interval $I_2$.}
For $x\in I_2$, we will use the inequalities
\[\bigg|\frac{x-(\pi/2)\sin^2 x}{(\pi/2)\sin^2 x}\bigg|, \bigg|\frac{(\pi/2-x)-(\pi/2)\cos^2 x}{(\pi/2)\cos^2 x}\bigg|\leq 1\]
(which hold with plenty of room to spare). Note also that $\log(1+y)\leq y-y^2/4$ for $|y|\leq 1$, so (using $-\log(\pi/2)=-\sin^2x\log(\pi/2)-\cos^2x\log(\pi/2)$),
\begin{align*}
 g(x)& =(\sin^2 x) \cdot \left(\log \bfrac{x}{(\pi/2) \sin^2 x}\right)+(\cos^2 x) \cdot \left(\log \bfrac{(\pi/2-x)}{(\pi/2) \cos^2 x}\right)
\\&\leq \sin^2x \bigg( \bfrac{x-(\pi/2) \sin^2 x}{(\pi/2) \sin^2 x}-\frac14\bfrac{x-(\pi/2) \sin^2 x}{(\pi/2) \sin^2 x}^2 \bigg)
\\&\qquad+\cos^2x \bigg( \bfrac{(\pi/2-x)-(\pi/2) \cos^2 x}{(\pi/2) \cos^2 x}-\frac14\bfrac{(\pi/2-x)-(\pi/2) \cos^2 x}{(\pi/2) \cos^2 x}^2 \bigg)
\\&= -\sin^2 x \cdot \frac14\bfrac{x-(\pi/2) \sin^2 x}{(\pi/2) \sin^2 x}^2-\cos^2x \cdot \frac14\bfrac{(\pi/2-x)-(\pi/2) \cos^2 x}{(\pi/2) \cos^2 x}^2 
\\&\leq 0.
\end{align*}
Thus we have 
$$g(x)\leq 0\quad \text{for} \quad x\in I_2=[\pi/4-0.1,\pi/4+0.1].$$ 
In addition, $g(x)=0$ only when the last inequality holds with equality, that is, when $x-(\pi/2) \sin^2 x =0$. As the derivative of $x-(\pi/2) \sin^2 x$ is strictly negative for $x\in I_2$ (it is always upper-bounded by $1-\pi \sin(\pi/4-0.1) \cos(\pi/4+0.1)\approx -0.259$) we have that $g(x)$ is injective on $I_2$. Thus $g(x)=0$ only if $x=\pi/4$.   

\medskip
\noindent\textit{Step 3: Interval $I$.}
Observe that $g'(x)$ is a linear combination of $8$ terms, each of which consists of a multiplicative constant at most 4 times the product of at most $3$ terms taken from the following list: \[\sin x,\;\cos x,\;\log x,\;1/x,\;\log \sin x,\;\log (\pi/2-x),\;1/(\pi/2-x),\;\log \cos x.\] Note that each of these terms is at most 10 (in absolute value) for $x\in I$. So, for $x\in I$, we have $|g'(x)|\leq 8\cdot 4\cdot 10^3\leq 5000.$ Let $M=I\cap \{ i\cdot 10^{-5}:i\in \mathbb{Z}\}$. We can evaluate $g$ at all points in $M$ on a computer, and thereby check that $\max\{g(x):x\in M\}\le -0.02$. So, the mean value theorem implies that for $x\in I$, we have \[g(x)\le -0.02+5000\cdot 10^{-5} < 0\quad \text{for} \quad x\in I=(0.1,\pi/4-0.1) \cup (\pi/4+0.1,\pi/2-0.1).\qedhere\]
\end{proof}
\end{document}